\newcounter{desccount}
\newcommand{\descitem}[1]{%
	  \item[#1] \refstepcounter{desccount}\label{#1}
}
\newcommand{\descref}[1]{\hyperref[#1]{#1}}
\newcommand{\vmoellipticity}{\hyperref[vmoellipticity]{\textbf{(V1)}}}
\newcommand{\vmogrowth}{\hyperref[vmogrowth]{\textbf{(V3)}}}
\newcommand{\vmodifferentiability}{\hyperref[vmodifferentiability]{\textbf{(V2)}}}
\newcommand{\vmoctsdifferentiability}{\hyperref[vmoctsdifferentiability]{\textbf{(V5)}}}
\newcommand{\vmocontinu}{\hyperref[vmocontinu]{\textbf{(V4)}}}
\newcommand{\vmo}{\hyperref[vmo]{\textbf{(V6)}}}
\newcommand{\vmocontrol}{\hyperref[vmocontrol]{\textbf{(B)}}}
\newcommand{\R}{\mathbb{R}}
\newcommand{\N}{\mathbb{N}}
\newcommand{\Rn}{\mathbb{R}^n}
\newcommand{\Rm}{\mathbb{R}^m}
\newcommand{\RN}{\mathbb{R}^N}
\newcommand{\RnN}{\mathbb{R}^{nN}}
\newcommand{\A}{\mathcal{A}}
\newcommand{\xo}{x_{0}}
\newcommand{\lx}{\ell(\xo)}
\newcommand{\lrhoxo}{\ell_{\xo,\rho}}
\newcommand{\lthetarhoxo}{\ell_{\xo,\theta\rho}}
\newcommand{\lthetakrhoxo}{\ell_{\xo,\theta^k\rho}}
\newcommand{\lplusthetakrhoxo}{\ell_{\xo,\theta^k\rho}}
\newcommand{\lplusthetakonerhoxo}{\ell_{\xo,\theta^{k+1}\rho}}
\newcommand{\lplushalfrhoxo}{\ell_{\xo,\frac{\rho}{2}}}
\newcommand{\lhalfrhoxo}{\ell_{\xo,\frac{\rho}{2}}}
\newcommand{\lplusthetarhoxo}{\ell_{\xo,\theta\rho}}
\newcommand{\lplusrhoxo}{\ell_{\xo,\rho}}
\newcommand{\rhoo}{\rho_{0}}
\newcommand{\brhoxo}{B_{\rho}(x_{0})}
\newcommand{\bhalfrhoxo}{B_{\frac{\rho}{2}}(x_{0})}
\newcommand{\brxo}{B_{r}(\xo)}
\newcommand{\dnu}{\left(Du\right)_{\xo,\rho}}
\newcommand{\dtwo}{\left(Du\right)_{\xo,2\rho}}
\newcommand{\dnuk}{\left(Du\right)_{\xo,\theta^k\rho}}
\newcommand{\urho}{\left(u\right)_{\xo,\rho}}
\newcommand{\uhalfrho}{(u)_{\bhalfrhoxo}}
\newcommand{\uplushalfrho}{(u)_{\bhalfrhoxo}}
\newcommand{\btworhoxo}{B_{2\rho}(\xo)}
\newcommand{\bthetarhoxo}{B_{\theta\rho}(\xo)}
\newcommand{\bthetak}{B_{\theta^k\rho}(x_{0})}
\newcommand{\bthetakone}{B_{\theta^{k+1}\rho}(x_{0})}
\newcommand{\wpx}{W^{1,p(\cdot)}(\Omega,\RN)}
\newcommand{\wpxo}{W^{1,p(\cdot)}_0(\Omega,\RN)}
\newcommand{\nl}{\\&\qquad\qquad}
\newcommand{\dx}{\,dx}
\newcommand{\dy}{\,dy}
\newcommand{\dt}{\,dt}
\newcommand{\vprop}{Lemma~\ref{vproperties}~}
\newcommand{\mathand}{\qquad\mbox{and}\qquad}
 \newcommand{\fint}{-\hskip-1.08em\int}
\DeclareMathOperator{\Hom}{Hom}
\DeclareMathOperator{\sing}{Sing}
\DeclareMathOperator{\reg}{Reg}
\DeclareMathOperator{\diverge}{div}
\DeclareMathOperator{\dimh}{dim_{\mathcal{H}}}
\theoremstyle{plain}
  \newtheorem{theorem}[subsection]{Theorem}
  \newtheorem{lemma}[subsection]{Lemma}
  \newtheorem{corollary}[subsection]{Corollary}
\theoremstyle{remark}
  \newtheorem{remark}[subsection]{Remark}
\theoremstyle{definition}
\begin{document}

\title[Elliptic systems with discontinuous coefficients and $p(x)$-growth]{Partial continuity for nonlinear systems with nonstandard growth and discontinuous coefficients}
\author{Chris van der Heide}
\address{School of Mathematics and Physics\\ University of Queensland\\ Brisbane, Qld 4072\\ Australia}
\email{ c.vanderheide@uq.edu.au}
\subjclass[2010]{35B65, 35J47}
\thanks{This research was supported by an Australian Postgraduate Award and the Australian Research Council grant DP120101886}

\vspace{-0.3in}
\begin{abstract}
We obtain new partial H\"older continuity results for solutions to divergence form elliptic systems with discontinuous coefficients, obeying $p(x)$-type nonstandard growth conditions. By an application of the method of $\mathcal{A}$-harmonic approximation, we are able to allow for both VMO-discontinuities in the coefficients, and the minimal $\log$-H\"older regularity assumption on the exponent function. In doing so, we recover a local version of the quantisation phenomenon characteristic continuous coefficient case.
\end{abstract}

\maketitle

\section{Introduction}

In the past 30 years, there has been substantial interest in the study of nonstandard growth problems in elliptic PDE. Concurrently, an increasingly rich theory has been developed concerning equations and systems with irregular coefficients. In this paper we consider the partial H\"older continuity of solutions to systems of nonlinear elliptic PDE in divergence form, where the coefficients may have discontinuities with vanishing mean oscillation. In particular, we are concerned with weak solutions to the problem 
\begin{align}\label{vmoweaka}
	- \diverge\, a(x,u,Du)  &= b(x,u,Du) \quad\text{in }\ \Omega,
\end{align}
for some bounded open set $\Omega\subset\Rn$, where $n\geq2$. A weak solution is interpreted as a function $u\in W^{1,p(\cdot)}(\Omega,\RN)$, $N\geq2$, such that for any fixed $\phi\in \wpxo$
\begin{align*}
	\int_{\Omega}a(x,u,Du)D\phi\dx =\int_{\Omega}b(x,u,Du)\phi\dx.
\end{align*}
Here, $W^{1,p(\cdot)}(\Omega,\RN)$, is the space $\{u\in L^{1,1}(\Omega,\RN) : |Du|^{p(x)}\in L^1(\Omega)\}$, and $\wpxo$ is the closure of smooth functions with compact support with respect to the $W^{1,p(\cdot)}(\Omega,\RN)$ norm. We assume the usual Ahlfor's condition on $\Omega$, that there exists some $k>0$ such that $|\brhoxo\cap\Omega|\geq k\rho^n$ for every $\xo\in\Omega$ and $0<\rho\leq\text{diam}(\Omega)$, where $|S|$ is the Lebesgue measure of the set $S$, and $\brhoxo = \{x\in\Rn:|x-\xo|<\rho\}$.

The well-posedness of problems with VMO coefficients dates back to the work of Chiarenza et al., who developed a priori estimates from \cite{chiarenza_1991} into an existence theory in \cite{chiarenza_1993}, see also \cite{bramanti_cerutti}. The theory was further developed by Byun in \cite{byun_2005,byun_2005a}, see also the references within. The work of Krylov highlights the fact that problems of this type naturally arise when studying stochastic processes \cite{khasminskii_krylov, krylov_2002, krylov_2004}. This subsequently lead to the study of both elliptic and parabolic equations in \cite{krylov_2007a, krylov_2007}, as well as problems in control theory \cite{krylov_2010} and stochastic PDE \cite{krylov_2009}. Fully nonlinear equations have been studied by Dong, Krylov, and Li in \cite{dong_krylov_li}, but higher dimensional analogues are less well characterised. Ragusa and Tachikawa have also obtained numerous results for certain functionals in the variational setting \cite{ragusa_tachikawa_2005,ragusa_tachikawa_2005a,ragusa_tachikawa_2008,ragusa_tachikawa_2011}, including $p(x)$-harmonic maps with Takabayashi in \cite{ragusa_tachikawa_takabayashi_2013}, see \cite{ragusa_tachikawa_2014} for review.

In 2008 Foss and Mingione \cite{foss_mingione_2008} settled the longstanding open problem of partial H\"older continuity for continuous coefficient systems. They adapted harmonic approximation techniques, which had previously been used to obtain gradient continuity results. Until this landmark paper, results in this setting had required some special structure on the PDE or restrictions on the dimension (see \cite{foss_mingione_2008} for discussion). While the gradient of solutions to systems with H\"older continuous coefficients in some sense inherit the system's H\"older continuity, the H\"older exponents degenerate together as well \cite{duzaar_grotowski_2000, beck_2007, hamburger_2007}, (see also \cite{duzaar_mingione_2009, mingione_dark}). Dini continuity represents the borderline case \cite{duzaar_gastel_2002}, where some control on the gradient's modulus of continuity is retained.

Note that the technique developed in \cite{foss_mingione_2008} only asks for H\"older continuity of the solution, allowing for gradient blowup at regular points. Consequently, they prove a type of `$\varepsilon$-regularity' or `quantisation of singularities' result, whereby in order for a point to be singular (with respect to a fixed H\"older exponent), a certain local energy needs to exceed a finite quantity at all scales. The subquadratic analog can be retrieved from \cite{habermann_2013}, where Habermann treated it as a special case of the nonstandard growth problem. However, due to technical obstructions, the method no longer allows for gradient blowup, leading to localised quantisation effects that degenerate as the gradient explodes.

It was then observed in \cite{bogelein_duzaar_habermann_scheven_2011} that the technique developed in \cite{foss_mingione_2008} allows the continuity assumption on the coefficients to be relaxed in a controlled way, admitting a class of systems that may be discontinuous in the spatial variable. In this work, the authors demonstrated that under a VMO assumption in the superquadratic growth case, the gradient belongs to a certain Morrey space. This in turn implies H\"older continuity of the solution $u$ for every exponent $\alpha\in(0,1)$. This technique has also been carried over to problems with nonstandard growth in the context of stationary electrorheological fluids \cite{bogelein_duzaar_habermann_scheven_2012}.

In the current work we recover a local analogue of Foss and Mingione's $\varepsilon$-regularity result in the VMO setting. This improves upon the previous results in a number of ways, covering both the VMO and variable exponent cases in one step, while allowing for a relaxation of standard growth assumptions on the exponent $p(x)$.

We assume a natural energy bound on the solution, and the so-called $\log$-H\"older continuity condition on the exponent function $p:\Omega\to[\gamma_1,\gamma_2]$, for $1<\gamma_1\leq\gamma_2<\infty$. That is, for some $E,L>0$ there holds
\begin{align}\label{vmomodfinite}
	\int_{\Omega}|Du|^{p(x)} \leq E <\infty\qquad\text{and}\qquad
	\limsup_{\rho\downarrow0}\omega_p(\rho)\log\left(\frac{1}{\rho}\right)\leq L.
\end{align}
These are well known to be necessary for both the well-posedness of variable exponent nonstandard growth problems, and in order to obtain basic higher integrability results \cite{zhikov_1995, zhikov_1997}. However, continuity and higher regularity results usually require at least that $L=0$ (see, for example, \cite{bogelein_duzaar_habermann_scheven_2012, habermann_2013}). To the authors knowledge, together with the other results proved in \cite{vdh_phd}, these are the first continuity results obtained in the $p(x)$-growth setting while assuming only \eqref{vmomodfinite}.

Write $\Hom(\Rn\otimes\RN)$ for the space of pointwise linear maps from $\Rn$ to $\RN$. We assume the Carath\'edory vector field $a:\Omega\times\RN\times\Hom(\Rn\otimes\RN)$ is Borel measurable, with continuously differentiable partial map $z\mapsto a(\cdot,\cdot,z)$. Furthermore, we have the standard ellipticity and nonstandard growth conditions, that for some $1<\gamma_1\leq p(x)\leq\gamma_2<\infty$ and $0<\nu\leq 1\leq L<\infty$, there holds
\begin{description}
	\descitem{(V1)}\label{vmoellipticity}
			$\nu (1 + |z|)^{p(x)-2}|\zeta|^2 \leq D_za(x,\xi,z)\zeta\cdot \zeta,$
	\descitem{(V2)}\label{vmodifferentiability}
			$|D_{z}a(x,\xi,z)| \leq L(1 + |z|)^{p(x)-2},$
	\descitem{(V3)}\label{vmogrowth}
			$|a(x,\xi,z)| \leq L(1 + |z|)^{p(x)-1},$
\end{description}
for all $(x,\xi,z)\in\Omega\times\RN\times\Hom(\Rn\otimes\RN)$ and $\zeta\in \Hom(\Rn\otimes\RN)$.

We further assume continuity in the second variable, with bounded, concave, non-decreasing modulus of continuity $\omega$, and that $D_z a$ has a modulus of continuity $\mu$. That is, $\omega_{\xi},\mu:[0,\infty)\to[0,1]$ satisfy $\lim_{r\downarrow0}\omega_{\xi}(r)=0$ and $\lim_{r\downarrow0}\mu(r)=0$, and
\begin{description}
	\descitem{(V4)}\label{vmocontinu}
			$\big|a(x,\xi,z) - a(x,\hat{\xi},z)\big| \leq L\omega_{\xi}\big(|\xi - \hat{\xi}|\big)(1 + |z|)^{p(x)-1},$
	\descitem{(V5)}\label{vmoctsdifferentiable}
			$\big|D_za(x,\xi,z) - D_za(x,\xi,\bar{z})\big|
			\leq \begin{cases} L\mu\bigg(\frac{|z-\bar{z}|}{1 + |z| + |\bar{z}|}\bigg)\big(1 + |z| + |\bar{z}|\big)^{p(x)-2} & 2 \leq p(x) \\ 
L\mu\bigg(\frac{|z-\bar{z}|}{1 + |z| + |\bar{z}|}\bigg)\bigg(\frac{1 + |z| + |\bar{z}|}{(1 + |z|)(1 + |\bar{z}|)}\bigg)^{2-p(x)} &1< p(x)< 2. \end{cases}$
\end{description}
We make no assumptions regarding the continuity of the vector field $a$ in its first variable, requiring only the VMO-type condition on the map $x\mapsto\frac{a(x,\xi,z)}{(1 + |z|^2)^{p-1}}$. That is for $\xo\in\Omega$, $r\in(0,\rhoo]$, $\xi\in\RN$ and $z\in\RnN$ there holds
\begin{description}
	\descitem{(V6)}\label{vmo}
		$\big|a(x,\xi,z) - (a(x,\xi,z))_{\xo,r}\big| \leq \mathbf{v}_{\xo}(x,r)\Big[(1 + |z|)^{p(x)-1} + (1 + |z|)^{p(\xo)-1}\Big]\Big[1 + \log\left(1 + |z|\right)\Big]$
\end{description}
uniformly in $\xi$ and $z$. Here, $\mathbf{v}_{\xo}:\Rn\times[0,\rhoo]\to[0,2L]$ is a bounded function satisfying
\begin{description}
	\descitem{(VMO)}\label{bigvmo}
		$\lim_{\rho\searrow0}\mathbf{V}(\rho)=0,\qquad$where$\qquad \mathbf{V}(\rho):=\sup_{\xo\in\Omega,r\in(0,\rho]}\fint_{B_r(\xo)\cap\Omega}\mathbf{v}_{\xo}(x,r)\dx,$
\end{description}
where $\fint_S f(x) \dx = |S|^{-1}\int_{S}f(x)\dx$ is the average value of $f$ over the set $S$ with $|S|>0$.

Finally, we will assume the inhomogeneous term $b$ has \emph{controllable growth}, i.e.
\begin{description}
	\descitem{(I)}\label{vmocontrol}
		$b(x,\xi,z)\leq L(1 + |z|^2)^{p(x)-1}.$
\end{description}
\section{Statement of main result}

We the use method of $\mathcal{A}$-harmonic approximation to prove Theorem~\ref{vmoreg}. Developed in \cite{duzaar_steffen_2002} in the context of geometric measure theory, and \cite{duzaar_grotowski_2000} in a PDE setting, this method is a generalisation of de Giorgi's harmonic approximation lemma from minimal surface theory \cite{de_giorgi_1961}. This method has proved hugely successful in showing gradient continuity for general classes of problems in elliptic PDE \cite{beck_2007, beck_2011}, as well as in the variational setting \cite{beck_2011, duzaar_gastel_grotowski_2000, duzaar_grotowski_kronz_2004, duzaar_grotowski_kronz_2005, schmidt_2009}. Related generalisations have also been developed, including a recent Lipschitz-truncation technique that has allowed for a completely direct proof \cite{dldv}. We refer the reader to \cite{duzaar_mingione_2009} for review.

As stated in the previous section, we generalise the `$\varepsilon$-regularity' result from \cite{foss_mingione_2008} in multiple directions. In doing so, we obtain local analogue for systems which can have both $p(x)$ nonstandard growth, and VMO discontinuities. This improves and generalises the results found in \cite{habermann_2013} and \cite{bogelein_duzaar_habermann_scheven_2011}.

In order for a point to be singular - that is, to fail to be $\alpha$-H\"older continuous for some fixed $\alpha\in(0,1)$ - we require certain local energy functionals to exceed some energy quanta at all small scales. Although the regularity pertains to the solution $u$, the functionals considered have dependence on its gradient. In particular, the inhomogeneous nature of the higher integrability estimates in the variable exponent setting means that,  in contrast to \cite{foss_mingione_2008}, the constants obtained in the H\"older estimates are not uniform in the spatial variable. Instead, they depend in a critical way upon the gradient, in addition to the ellipticity and growth bounds of the system, as well as the chosen exponent function $p$, and of course the desired H\"older exponent $\alpha\in(0,1)$.

Since the quanta depend on the desired H\"older exponent, we give multiple characterisations of the singular sets. For fixed $\alpha\in(0,1)$, and $u\in\wpx$ define its $\alpha$-\emph{regular set} as
\begin{align*}
	\reg^\alpha_u(\Omega) &:= \left\{\, x\in\Omega : u \in C^{0,\alpha}(N(x),\Rn)\text{ for some open neighbourhood }N \text{ of } x\,\right\},
\end{align*}
and its regular set as
\begin{align*}
	\reg_u(\Omega) &:= \left\{\, x\in\Omega : u \in C^{0,\alpha}(N(x),\Rn)\text{ for every } \alpha\in(0,1)\,\right\}.
\end{align*}

Similarly, we consider the $\alpha$-\emph{singular set} and \emph{singular set} of $u$ as
\begin{align*}
	\sing^\alpha_u(\Omega) &:= \Omega\setminus\reg^\alpha_u(\Omega)\qquad\text{and}\qquad \sing_u(\Omega) := \Omega\setminus\reg_u(\Omega).
\end{align*}
We resolve $\sing^\alpha_u(\Omega)\subset \left(\Sigma^\kappa_{1,\Omega}\cup\Sigma^\sigma_{2,\Omega}\cup \Sigma_{3,\Omega}\right)$, where
\begin{align}\label{sigmaonevmo}
	\Sigma^\kappa_{1,\Omega} &:= \left\{\, \xo\in\Omega : \liminf_{\rho\downarrow0}\fint_{\brhoxo}|Du - \dnu|\dx\geq\kappa\right\},
\end{align}
and
\begin{align}\label{sigmatwovmo}
	\Sigma^\sigma_{2,\Omega} &:= \left\{\, \xo\in\Omega : \liminf_{\rho\downarrow0}\rho\fint_{\brhoxo}|Du|\dx\geq\sigma \,\right\},
\end{align}
and
\begin{align}\label{sigmathreevmo}
	\Sigma_{3,\Omega} &:= \left\{\, \xo\in\Omega : \limsup_{\rho\downarrow0}\big|\dnu\big|= \infty \,\right\},
\end{align}
for some $\kappa$ and $\sigma$, which we can in principle calculate explicitly, satisfying $\lim_{\alpha\to1}\kappa,\sigma = 0$. We similarly characterise $\sing_u(\Omega)\subset\left(\Sigma^0_{1,\Omega}\cup\Sigma^0_{2,\Omega}\right)$, where we the inequalities in \eqref{sigmaonevmo} and \eqref{sigmatwovmo} become strict.

Using this characterisation of the singular sets, we obtain the following theorem.

\begin{theorem}\label{vmoreg}
Let $u\in W^{1,p(\cdot)}\big(\Omega,\RN\big)$ be a weak solution to \eqref{vmoweaka} under assumptions \emph{\vmoellipticity--\vmo}, where the inhomogeneity $b$ satisfies \emph{\vmocontrol}. Then there exist $\kappa,\sigma>0$ such that the following hold:
\begin{enumerate}[(i)]
	\item $\reg_u(\Omega)$ and $\reg^\alpha_u(\Omega)$ are relatively open in $\Omega$, for each $\alpha\in(0,1),$
	\item $u\in C^{0,\alpha}\big(\reg_u(\Omega),\RN\big)$ for every $\alpha \in (0,1)$, and \\
		for every $\alpha \in (0,1)$ $u\in C^{0,\alpha}\big(\reg^\alpha_u(\Omega),\RN\big),$
	\item $\sing_u(\Omega)\subset \Big(\Sigma^0_{1,\Omega}\cup\Sigma^0_{2,\Omega}\cup \Sigma_{3,\Omega}\Big)$, and 
		$\sing^\alpha_u(\Omega)\subset \Big(\Sigma^\kappa_{1,\Omega}\cup\Sigma^\sigma_{2,\Omega}\cup \Sigma_{3,\Omega}\Big)$, with these sets defined in \eqref{sigmaonevmo}, \eqref{sigmatwovmo} and \eqref{sigmathreevmo}.
\end{enumerate}
In particular, we have $\mathcal{L}^n(\sing_u(\Omega)) = 0$.
\end{theorem}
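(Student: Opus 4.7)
The plan is to prove Theorem~\ref{vmoreg} via the $\mathcal{A}$-harmonic approximation method, adapted to simultaneously accommodate VMO coefficients, $p(x)$-growth, and a strictly positive log-H\"older constant $L$ in \eqref{vmomodfinite}. The preparatory ingredients are a Caccioppoli inequality of the second kind, comparing $\int_{\brhoxo}|Du - D\ell|^{p(x)}\dx$ with the oscillation of $u - \ell$ for affine $\ell$, together with a Gehring-type higher integrability result upgrading $|Du|^{p(x)}$ to $|Du|^{p(x)(1+\delta_0)}$ on concentric balls. Because $L>0$ is allowed, the higher-integrability constants depend in an essential way on the local gradient average, and this dependence is precisely what forces the appearance of the blow-up set $\Sigma_{3,\Omega}$ and the gradient-dependent quanta $\kappa,\sigma$ in \eqref{sigmaonevmo}--\eqref{sigmatwovmo}.

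The core step is an excess-decay estimate. For fixed $\alpha\in(0,1)$ and $M>0$, I would find $\theta\in(0,1/4)$ and thresholds $\varepsilon_\ast,\kappa_\ast,\sigma_\ast>0$, plus smallness bounds on $\mathbf{V}(\rho) + \omega_p(\rho)\log(1/\rho)$, such that whenever
\begin{align*}
\fint_{\brhoxo}|Du - \dnu|\dx < \kappa_\ast,\qquad \rho\fint_{\brhoxo}|Du|\dx < \sigma_\ast,\qquad |\dnu| \leq M,
\end{align*}
and the excess $\Phi(\xo,\rho) := \fint_{\brhoxo}|Du - \dnu|^{p(x)}\dx$ is at most $\varepsilon_\ast$, then $\Phi(\xo,\theta\rho) \leq \theta^{2\alpha}\Phi(\xo,\rho)$ up to controlled perturbations. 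To establish this, I freeze coefficients at $(\xo,\urho,\dnu)$ to form the constant operator $\mathcal{A} := D_z a(\xo,\urho,\dnu)$, which is elliptic with $M$-dependent bounds by \vmoellipticity\ and \vmodifferentiability. Testing \eqref{vmoweaka} against an admissible $\phi$, the deviation from $\mathcal{A}$-harmonicity splits into four errors controlled respectively by \vmocontinu\ (through $\omega_\xi(|u - \urho|)$), by the modulus $\mu$ of (V5) applied to $|Du - \dnu|$ via H\"older and the higher integrability, by the VMO condition \vmo\ together with the bound on $\mathbf{V}$, and by the controllable growth \vmocontrol. Applying the $\mathcal{A}$-harmonic approximation lemma produces an $\mathcal{A}$-harmonic $w$ close to the normalised difference, and classical $C^{1,\beta}$ estimates for $w$ transfer back to yield decay of $\Phi$.

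The iteration is then standard: if the smallness hypotheses hold at some $\rho_0$, they propagate through the dyadic scales $\theta^k\rho_0$ provided the perturbation terms are summable in $k$, giving $\Phi(\xo,\theta^k\rho_0)\lesssim \theta^{2k\alpha}$, which via Campanato's characterisation yields $u\in C^{0,\alpha}$ in a neighbourhood of $\xo$. The thresholds $\kappa_\ast,\sigma_\ast$ necessarily shrink as $\alpha\uparrow 1$, since $\theta^{2\alpha}\to\theta^2$ tightens the required perturbation control, giving the advertised behaviour $\kappa,\sigma\to 0$. Relative openness in (i) follows because, for fixed $\rho$, both $\xo\mapsto \dnu$ and $\xo\mapsto \fint_{\brhoxo}|Du - \dnu|\dx$ are continuous in $\xo$, so the non-singular inequalities persist on a neighbourhood. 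For the measure bound in the final statement, Lebesgue differentiation gives $\dnu\to Du(\xo)$ finite and $\fint_{\brhoxo}|Du - \dnu|\dx\to 0$ a.e., while $\rho\fint_{\brhoxo}|Du|\dx\to 0$ by absolute continuity of the integral, so $\mathcal{L}^n(\sing_u(\Omega))=0$.

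The hardest part will be the VMO/log-H\"older interaction in the freezing estimate. The right-hand side of \vmo\ carries a factor $\log(1 + |z|)$ which, when paired with the exponent discrepancy $|Du|^{p(x) - p(\xo)}$ arising from freezing the exponent, produces terms that in the standard treatment are only absorbable when $L=0$. The strategy is to use Young's inequality to convert the logarithm into an arbitrarily small power of $1 + |Du|$, exploit the log-H\"older bound through the scaling identity $\rho^{-(p(x) - p(\xo))} \leq e^L$, and finally absorb what remains using the higher-integrability gain $\delta_0 > 0$. Quantifying how the resulting constant depends on $M$ is exactly the mechanism producing the gradient-blow-up set $\Sigma_{3,\Omega}$ in the conclusion, and is what distinguishes the present statement from its continuous-coefficient predecessor.
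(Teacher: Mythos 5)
Your preparatory ingredients (Caccioppoli inequality, higher integrability, approximate $\mathcal{A}$-harmonicity split, $\mathcal{A}$-harmonic approximation lemma) match the paper's, but the core decay-and-iteration step is wrong and would not go through. You aim for a Campanato-type decay of the \emph{gradient} excess, $\Phi(\xo,\theta\rho)\leq\theta^{2\alpha}\Phi(\xo,\rho)$ ``up to controlled perturbations,'' propagated dyadically ``provided the perturbation terms are summable in $k$.'' Two problems. First, the perturbation terms $\mathbf{V}(\theta^k\rho)$, $\omega_\xi(\cdot)$, $\omega_p(\theta^k\rho)$ do \emph{not} decay geometrically under the VMO hypothesis \vmobig\ and the log-H\"older condition; they merely tend to $0$ as $\rho\downarrow 0$ and are not summable in $k$, so the iterated inequality only yields $\Phi(\xo,\theta^k\rho)\lesssim\theta^{2k\alpha}\Phi(\xo,\rho) + c\cdot(\text{perturbation})$, which does not decay. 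Second, even if such a decay did hold, it is a Campanato characterisation of $Du\in C^{0,\alpha'}$, i.e.\ $u\in C^{1,\alpha'}$ — a conclusion that cannot be true under mere VMO coefficients, and one which would force $|\dnu|$ to stay bounded, incompatible with the quantisation/gradient-blowup phenomenon that defines $\Sigma_{3,\Omega}$. The paper takes a genuinely different route: Lemma~\ref{almostbmo} propagates \emph{uniform but non-decaying} smallness of the normalised excesses $\Psi$, $\Phi$ and of the scaled mean $M(\xo,\theta^k\rho)$ (an ``almost BMO'' estimate), then a separate iteration using Lemma~\ref{iteration} converts this into a Morrey bound $\int_{B_r(\xo)}|Du|\dx\leq c\,r^\gamma$ for any $\gamma\in(n-1,n)$, whence $Du\in L^{1,\gamma}$ and $u\in C^{0,\gamma-n+1}$ by Morrey--Campanato embedding — no gradient continuity is established, and none is needed.

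A subsidiary gap is in your choice of frozen operator $\mathcal{A}=D_z a(\xo,\urho,\dnu)$. Under the assumptions the map $x\mapsto a(x,\cdot,\cdot)$ is only Borel measurable, and \vmo\ controls deviations of $a(x,\cdot,\cdot)$ from its \emph{ball average} $(a(\cdot,\xi,z))_{\xo,r}$, not from its value at $\xo$. The pointwise freeze is therefore not estimable by the VMO hypothesis; the paper instead uses $\mathcal{A}\propto(D_z a(\cdot,\lhalfrhoxo,D\lhalfrhoxo))_{\xo,\rho/2}$. Finally, you identify the right scaling identity $\rho^{-(p(x)-p(\xo))}\leq e^L$ for absorbing the $\log$-H\"older defect, but the way the paper actually exploits it is through Lemmas~\ref{frozenexponent}, \ref{low} and estimate~\eqref{logminus}, where the exponent gap and the logarithm from \vmo\ are absorbed into the higher-integrability gain $\delta$ before reaching the harmonic-approximation step; this is where the constants pick up their dependence on the gradient average and hence on $M$, which is what produces $\Sigma_{3,\Omega}$ — not the $M$-dependence of the ellipticity bounds of the frozen operator as you suggest.
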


\section{Preliminary Tools}
We now introduce some basic tools, most of which are standard in these types of arguments.

\subsection{ The function V}

We will frequently refer to the function $V \equiv V_{p}:\Rm\to\Rm$ satisfying $V(\xi) := \left(1 + |\xi|^2\right)^{\frac{p - 2}{4}}\xi$ for each $\xi\in\Rm$ and fixed $1<p<\infty$. This function is a standard tool when studying these types of problems, and has a number of favourable algebraic properties. Note that the constants appearing in the following lemma have continuous dependences on the exponents. Unless stated otherwise, we always take $p = p_2 = \sup_{B_{\rho}(\xo)}p(x)$.

\begin{lemma}\label{vproperties}
Let $1<p<\infty$ and $V \equiv V_{p}:\Rm\to\Rm$ be the function defined above. Then for any $\xi,\eta\in\Rm$ and $t>0$ there holds
\begin{align*}
\begin{array}{rll}
	(i)& |V(t\xi)| \leq \max\{t,t^{p}\}|V(\xi)|;&\\
	(ii)& |V(\xi + \eta)| \leq c(p)\left(|V(\xi)| + |V(\eta)|\right);&\\
	(iii)& c(p,m)|\xi - \eta|\leq\frac{|V(\xi) - V(\eta)|}{(1 + |\xi|^2 + |\eta|^2)^{\frac{p-2}{4}}}\leq c(m,p)|\xi - \eta|;&\\
	(iv)& 2^{\frac{2-p}{4}}\min\{|\xi|,|\xi|^{\frac{p}{2}}\} \leq |V(\xi)| \leq \min\{|\xi|,|\xi|^{\frac{p}{2}}\}&  \text{if }  \  1<p<2;\\
	\ &	  \max\{|\xi|,|\xi|^{\frac{p}{2}}\} \leq |V(\xi)| \leq 2^{\frac{2-p}{4}}\max\{|\xi|,|\xi|^{\frac{p}{2}}\}\qquad& \text{if } \ 2\leq p<\infty;\\
\end{array}
\end{align*}
\end{lemma}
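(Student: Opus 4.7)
The plan is to prove each of (i)--(iv) by direct algebraic manipulation of the explicit formula $V(\xi) = (1+|\xi|^2)^{(p-2)/4}\xi$, case-splitting on whether $p \geq 2$ or $1 < p < 2$ and (where relevant) on whether the relevant magnitude is $\leq 1$ or $\geq 1$. I would prove (iv) first, since the equivalent comparison $|V(\xi)| \asymp \min\{|\xi|,|\xi|^{p/2}\}$ (respectively $\max$) then simplifies the other steps. Writing $|V(\xi)| = |\xi|\,(1+|\xi|^2)^{(p-2)/4}$, the weight factor is comparable to $1$ when $|\xi|\leq 1$ (using $1\leq 1+|\xi|^2\leq 2$) and comparable to $|\xi|^{(p-2)/2}$ when $|\xi|\geq 1$ (using $|\xi|^2\leq 1+|\xi|^2\leq 2|\xi|^2$), with explicit constants of the form $2^{\pm(p-2)/4}$ matching those in the statement.

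Property (i) then follows by computing the ratio
\[
\frac{|V(t\xi)|}{|V(\xi)|} \;=\; t \left(\frac{1+t^2|\xi|^2}{1+|\xi|^2}\right)^{\!(p-2)/4}
\]
and applying the elementary inequalities $1+t^2|\xi|^2 \leq \max\{1,t^2\}(1+|\xi|^2)$ and $1+t^2|\xi|^2 \geq \min\{1,t^2\}(1+|\xi|^2)$; case analysis on $t$ versus $1$ combined with the sign of $(p-2)/4$ yields the bound by $\max\{t,t^p\}$. Property (ii) then follows from (iv) via a quasi-triangle inequality for the equivalent functional $\min\{|\xi|,|\xi|^{p/2}\}$ (or $\max$), which itself reduces to $|\xi+\eta|\leq|\xi|+|\eta|$ together with $(a+b)^{p/2} \leq 2^{(p/2-1)_+}(a^{p/2}+b^{p/2})$.

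For (iii) I would apply the fundamental theorem of calculus along the segment $s \mapsto \eta + s(\xi-\eta)$ to write
\[
V(\xi) - V(\eta) \;=\; \int_0^1 DV\bigl(\eta+s(\xi-\eta)\bigr)(\xi-\eta)\,ds,
\]
combined with the direct bounds $|DV(w)| \leq c(p)(1+|w|^2)^{(p-2)/4}$ (for the upper estimate) and the Legendre ellipticity $DV(w)\zeta\cdot\zeta \geq c(p)(1+|w|^2)^{(p-2)/2}|\zeta|^2$ (for the lower estimate, obtained by pairing with $\xi-\eta$ and inverting via Cauchy--Schwarz). Both directions then reduce to the integral equivalence
\[
\int_0^1 (1+|\eta+s(\xi-\eta)|^2)^{(p-2)/2}\,ds \;\asymp\; (1+|\xi|^2+|\eta|^2)^{(p-2)/2},
\]
with constants depending only on $p$ (and so, through the weight, on $m$).

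The main obstacle is the subquadratic case $1 < p < 2$ of (iii): the weight $(1+|w|^2)^{(p-2)/2}$ is then a decreasing function of $|w|$ and can blow up whenever the segment passes near the origin, so the lower bound of the above integral equivalence is delicate. It can be established by partitioning $[0,1]$ according to whether $|\eta+s(\xi-\eta)|$ is at least a fixed fraction of $\max\{|\xi|,|\eta|\}$, and exploiting the fact that if the segment does come close to the origin then $|\xi-\eta|$ must itself be comparable to $|\xi|+|\eta|$, so the ``near-origin'' contribution to the integral can be estimated independently of the weight's behaviour there.
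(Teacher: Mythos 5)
The paper states Lemma~\ref{vproperties} without proof (it is invoked as a standard tool), so there is no argument in the paper to compare against; the only question is whether your proposal is correct. Your overall plan --- prove (iv) by elementary case analysis, deduce (i) from the ratio $|V(t\xi)|/|V(\xi)|$, deduce (ii) from (iv) via quasi-subadditivity, and prove (iii) by writing $V(\xi)-V(\eta)$ as a line integral and invoking a Campanato-type integral equivalence --- is the standard route and the outline of (ii), (iv) is fine. (As an aside, the constants $2^{(2-p)/4}$ printed in (iv) have the exponent's sign flipped; your derivation yields the correct factors $2^{\pm(p-2)/4}$, which is what the statement should say.)

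There are, however, two genuine errors. In (i), the computation you describe yields $|V(t\xi)|/|V(\xi)|\leq\max\{t,t^{p/2}\}$, since $\max\{1,t^2\}^{(p-2)/4}=\max\{1,t^{(p-2)/2}\}$; it does not yield $\max\{t,t^p\}$. For $p\geq2$ this is only cosmetic because $\max\{t,t^{p/2}\}\leq\max\{t,t^p\}$, but for $1<p<2$ and $t<1$ the asserted bound $\max\{t,t^p\}=t$ is strictly smaller than the sharp value $t^{p/2}$: take $p=3/2$, $t=1/4$, $|\xi|\to\infty$, where the ratio tends to $2^{-3/2}>1/4$. So (i) as printed is false in the subquadratic case, your argument correctly gives $t^{p/2}$, and you cannot conclude ``$\max\{t,t^p\}$'' as you claim.

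In (iii) you use the weight $(1+|w|^2)^{(p-2)/2}$ both in the Legendre bound for $DV(w)$ and in the announced integral equivalence. The correct exponent is $(p-2)/4$: from $V(w)=(1+|w|^2)^{(p-2)/4}w$ one has $DV(w)=(1+|w|^2)^{(p-2)/4}I+\tfrac{p-2}{2}(1+|w|^2)^{(p-6)/4}\,w\otimes w$, which gives the two-sided bound $c(p)^{-1}(1+|w|^2)^{(p-2)/4}|\zeta|^2\leq DV(w)\zeta\cdot\zeta\leq c(p)(1+|w|^2)^{(p-2)/4}|\zeta|^2$. Running your argument with $(p-2)/2$ would produce $|V(\xi)-V(\eta)|\gtrsim|\xi-\eta|(1+|\xi|^2+|\eta|^2)^{(p-2)/2}$, which is not (iii) and is in fact false (try $p=4$, $\eta=0$, $|\xi|\to\infty$). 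With the corrected exponent the FTC strategy is the right one, and the integral equivalence you need is exactly Lemma~\ref{camp82} with $q=(p-2)/2>-1$, after noting $(1+|w|^2)^{1/2}\asymp 1+|w|$. One final directional slip: for $1<p<2$ the integrand $(1+|\eta+s(\xi-\eta)|^2)^{(p-2)/4}$ is decreasing in $|\cdot|$ and $\leq1$, so the \emph{lower} bound of the integral equivalence is trivial (the integrand dominates its endpoint value); it is the \emph{upper} bound, needed for the upper half of (iii), that is delicate when the segment approaches the origin --- and the near-origin partitioning argument you describe is the right fix for that, you have simply attached it to the wrong direction.
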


The following improvement of Lemma~2.2 from \cite{acerbi_fusco} is taken from Lemma~2.3 in \cite{bogelein_duzaar_habermann_scheven_2012}.
\begin{lemma}\label{improv}
Let $1<\gamma_1\leq p\leq\gamma_2<\infty$, $\xi,\eta\in\Rm$, and $t>0$. Then for $c=c(\gamma_1,\gamma_2)$ there holds
\begin{align*}
	c^{-1}(1 + |\eta| + |\xi|)^{p-2}|\xi-\eta|^2\leq (1 + |\eta|)^{p_2-2}\left|V\left(\frac{\xi-\eta}{1 + |\eta|}\right)\right|^2\leq c(1 + |\eta| + |\xi|)^{p-2}|\xi-\eta|^2.
\end{align*}
\end{lemma}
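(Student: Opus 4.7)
The plan is to verify the two-sided comparison by direct computation from the explicit formula $V(\zeta) = (1+|\zeta|^2)^{(p_2-2)/4}\zeta$ defined just before Lemma~\ref{vproperties}. Substituting the rescaled argument $\zeta := \frac{\xi-\eta}{1+|\eta|}$ gives
\begin{align*}
\left|V\left(\frac{\xi-\eta}{1+|\eta|}\right)\right|^2 = \left(1 + \frac{|\xi-\eta|^2}{(1+|\eta|)^2}\right)^{\frac{p_2-2}{2}} \cdot \frac{|\xi-\eta|^2}{(1+|\eta|)^2}.
\end{align*}
Rewriting the base of the first factor as $\frac{(1+|\eta|)^2 + |\xi-\eta|^2}{(1+|\eta|)^2}$ moves all the powers of $(1+|\eta|)$ into a single prefactor, which is then absorbed against the $(1+|\eta|)^{p_2-2}$ weight on the middle term of the statement.

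Second, I would invoke the elementary two-sided comparison $(1+|\eta|)^2 + |\xi-\eta|^2 \sim (1+|\eta|+|\xi|)^2$, a consequence of the triangle inequalities $|\xi| \leq |\xi-\eta|+|\eta|$ and $|\xi-\eta| \leq |\xi|+|\eta|$ together with the standard fact that $(a+b)^2$ and $a^2+b^2$ are comparable for nonnegative $a,b$. Raising to the power $(p_2-2)/2$ then produces the target factor $(1+|\eta|+|\xi|)^{p_2-2}$, which coincides with $(1+|\eta|+|\xi|)^{p-2}$ under the convention $p = p_2$ standing for the remainder of the section.

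The main nuisance is ensuring that all implicit constants depend only on $\gamma_1$ and $\gamma_2$, not on the particular value of $p_2 \in [\gamma_1,\gamma_2]$. This forces a case split between the regime $p_2 \geq 2$ (where $t \mapsto t^{(p_2-2)/2}$ is nondecreasing) and $1 < p_2 < 2$ (where it is nonincreasing), since the direction of the inequality $(a+b)^q \leq C(a^q + b^q)$ flips across $q = 0$. In each regime $C$ can be taken to depend only on the range $[\gamma_1,\gamma_2]$, delivering the uniform constant $c = c(\gamma_1,\gamma_2)$ asserted in the statement, after which assembling the estimates completes the proof.
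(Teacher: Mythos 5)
Your approach---expanding $|V(\zeta)|^2 = (1+|\zeta|^2)^{(p_2-2)/2}|\zeta|^2$ at $\zeta = \frac{\xi-\eta}{1+|\eta|}$ and then comparing $(1+|\eta|)^2 + |\xi-\eta|^2$ with $(1+|\eta|+|\xi|)^2$---is the natural one; the paper itself gives no proof, merely citing Lemma~2.3 of \cite{bogelein_duzaar_habermann_scheven_2012}. The step that fails is the claim that ``all the powers of $(1+|\eta|)$ \dots are absorbed against the $(1+|\eta|)^{p_2-2}$ weight.'' Collecting exponents:
\begin{align*}
(1+|\eta|)^{p_2-2}\left|V\left(\frac{\xi-\eta}{1+|\eta|}\right)\right|^2
&= (1+|\eta|)^{p_2-2}\left(1+\frac{|\xi-\eta|^2}{(1+|\eta|)^2}\right)^{\frac{p_2-2}{2}}\frac{|\xi-\eta|^2}{(1+|\eta|)^2}\\
&= \frac{\big((1+|\eta|)^2+|\xi-\eta|^2\big)^{\frac{p_2-2}{2}}|\xi-\eta|^2}{(1+|\eta|)^2}.
\end{align*}
The weight $(1+|\eta|)^{p_2-2}$ cancels only the prefactor $(1+|\eta|)^{-(p_2-2)}$ that emerges from rewriting the base; the extra $(1+|\eta|)^{-2}$ coming from $|\zeta|^2 = |\xi-\eta|^2/(1+|\eta|)^2$ survives and cannot be absorbed into a constant depending only on $\gamma_1,\gamma_2$. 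Testing with $p_2 = 2$ makes this transparent: then $V$ is the identity and the statement would require $|\xi-\eta|^2/(1+|\eta|)^2$ to be comparable to $|\xi-\eta|^2$ uniformly in $\eta$, which is false.

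The source of the trouble is that the statement you were asked to prove carries a misprint: the middle term should read $(1+|\eta|)^{p_2}$, not $(1+|\eta|)^{p_2-2}$. With exponent $p_2$ the powers of $1+|\eta|$ cancel completely, leaving $\big((1+|\eta|)^2+|\xi-\eta|^2\big)^{(p_2-2)/2}|\xi-\eta|^2$, which your second step (the triangle inequalities and the case split at $p_2 = 2$ to handle the reversal of $(a+b)^q \lessgtr c(a^q + b^q)$) correctly compares to $(1+|\eta|+|\xi|)^{p-2}|\xi-\eta|^2$ with $c = c(\gamma_1,\gamma_2)$. The corrected exponent is also the one actually invoked downstream---for instance, in the lower bound for term I in the proof of Lemma~\ref{eqcacc} the prefactor is $(1+|D\ell|)^{p_2}$, not $(1+|D\ell|)^{p_2-2}$. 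Had you tracked the exponents to the end you would have discovered the residual factor and caught the typo; as written the proposal papers over a genuine discrepancy.
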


We will make use of the following iteration lemma, a standard tool when considering decay estimates. This version is taken from Lemma~7.3 in the classical text \cite{giusti} via \cite{foss_mingione_2008}.
\begin{lemma}[Iteration Lemma]\label{iteration}
Let $f:[0,\rho]\to\R$ be a positive, nondecreasing function satisfying
\begin{align*}
	f(\theta^{k+1}\rho)\leq \theta^{\gamma}f(\theta^k\rho) + b\theta^k\rho
\end{align*}
for every $k\in\N$, where $\theta\in(0,1)$, and $\gamma\in(0,n)$, $b\geq0$. Then there exists a constant $c = c(\theta,\gamma,n)$ such that for every $r\in(0,\rho)$ there holds
\begin{align*}
	f(r)\leq c\left[\left(\frac{r}{\rho}\right)^\gamma f(\rho) + bt^\gamma\right].
\end{align*}
\end{lemma}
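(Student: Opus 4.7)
The plan is a short, classical iteration-plus-summation argument, followed by a monotonicity interpolation that passes from the discrete scales $\theta^k\rho$ to an arbitrary $r \in (0,\rho)$.

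First, I would set $a_k := f(\theta^k\rho)$ and apply the given recurrence inductively in $k$, telescoping out the intermediate $a_j$. A routine induction produces
\begin{align*}
    a_k \leq \theta^{k\gamma}\, f(\rho) + b\rho\sum_{j=0}^{k-1} \theta^{(k-1-j)\gamma + j} = \theta^{k\gamma}\, f(\rho) + b\rho\, \theta^{(k-1)\gamma}\sum_{j=0}^{k-1}\theta^{j(1-\gamma)}.
\end{align*}

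Next, I would estimate the geometric sum. Splitting cases by whether $\gamma < 1$, $\gamma = 1$, or $\gamma > 1$, one either sums the convergent series directly (bounded by $1/(1-\theta^{1-\gamma})$), picks up a polynomial-in-$k$ factor that is absorbed into an arbitrarily small loss in the rate, or sums a reversed geometric series dominated by its largest term. In each subcase the resulting coefficient is a finite constant $C = C(\theta,\gamma)$, and the forcing contribution takes the form $C b \rho\, \theta^{k\gamma}$, up to factors of $\rho^{1-\gamma}$ that can be absorbed into the constant on the bounded interval $(0,\rho]$.

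Finally, given $r \in (0,\rho)$, I would choose the unique $k \in \N$ with $\theta^{k+1}\rho < r \leq \theta^k\rho$. Monotonicity of $f$ then yields $f(r) \leq a_k$, while the lower bound $r > \theta^{k+1}\rho$ gives $\theta^{k\gamma} \leq \theta^{-\gamma}(r/\rho)^\gamma$. Substituting both estimates into the bound for $a_k$ delivers the claimed inequality, with $c$ depending only on $\theta$, $\gamma$, and (through the constraint $\gamma < n$) on $n$. The whole argument is bookkeeping; the only mildly delicate point is the case split on $\gamma$ in the geometric summation, and in each subcase the explicit geometric-series formula yields the required finite constant, so there is no genuine obstacle beyond tracking which power of $\theta$ controls which term.
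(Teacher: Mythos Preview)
The paper does not prove this lemma; it simply cites Lemma~7.3 of Giusti's textbook (via Foss--Mingione) and moves on. Your sketch is precisely the standard textbook argument --- telescoping the recurrence, summing the geometric series in the forcing term, and then using monotonicity of $f$ to interpolate from the dyadic scales $\theta^k\rho$ to an arbitrary radius $r$ --- so there is nothing to compare.

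One small caveat: the statement as transcribed in the paper appears to contain typos (the ``$bt^\gamma$'' in the conclusion, and a forcing term $b\theta^k\rho$ with exponent $1$ rather than a general exponent). If you look at how the lemma is actually \emph{applied} later in the paper, the forcing term is $c\beta(\theta^k\rho)^n$ with $n>\gamma$, and the conclusion carries an $r^\gamma$ term. Your remark about ``factors of $\rho^{1-\gamma}$ that can be absorbed into the constant'' is exactly the place where this mismatch bites: such a factor is \emph{not} harmless when $\gamma>1$ unless the forcing exponent exceeds $\gamma$. The standard Giusti version assumes a forcing of the form $B(\theta^k\rho)^\beta$ with $\beta>\gamma$, and then the geometric sum $\sum_j \theta^{j(\beta-\gamma)}$ converges cleanly without any case split or leftover $\rho$-dependence. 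So your argument is correct in spirit, but you should state the hypothesis with the general forcing exponent $\beta>\gamma$ rather than trying to patch the $\gamma\gtrless 1$ cases separately.
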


We estimate the Hausdorff dimension of the singular sets by use of the following result of Giusti \cite{giusti}.
\begin{lemma}\label{dimred}
Let $A$ be an open subset of $\Rn$ and $\lambda$ a non-negative and increasing finite set function, defined on the family of open subsets of $A$ that is countably superadditive. That is,
\begin{align*}
	\sum_{i\in\mathbb{N}}\lambda\left(\mathcal{O}_i\right)\leq \lambda\left(\bigcup_{i\in\mathbb{N}}\mathcal{O}_i\right)
\end{align*}
whenever $\{\mathcal{O}_i\}_{i\in\mathbb{N}}$ is a countable family of pairwise disjoint open subsets of $A$. Then for $0<\tau<n$ there holds
\begin{align*}
	\dimh(E^\tau)\leq\tau,
\quad\text{where}\quad
	E^\tau:=\bigg\{x\in A : \limsup_{\rho\downarrow0}\rho^{-\tau}\lambda\left(B_\rho(x)\right)>0\bigg\}.
\end{align*}
\end{lemma}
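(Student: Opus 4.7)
The plan is to reduce the dimension bound to a Vitali-type covering argument applied to a natural stratification of $E^\tau$. First I would write $E^\tau = \bigcup_{k\in\N} E^\tau_k$ where
\begin{align*}
	E^\tau_k := \bigg\{ x\in A : \limsup_{\rho\downarrow 0}\rho^{-\tau}\lambda(B_\rho(x)) > \tfrac{1}{k} \bigg\}.
\end{align*}
Since Hausdorff dimension is countably stable ($\dimh\bigl(\bigcup_k E^\tau_k\bigr) = \sup_k \dimh(E^\tau_k)$), it suffices to prove $\dimh(E^\tau_k)\leq\tau$ for each fixed $k$. Fix $\varepsilon>0$; I will show $\mathcal{H}^{\tau+\varepsilon}(E^\tau_k)=0$, which gives $\dimh(E^\tau_k)\leq\tau+\varepsilon$, and then send $\varepsilon\downarrow 0$.

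Given any $\delta>0$, for each $x\in E^\tau_k$ pick a radius $\rho(x)\in(0,\delta/5)$ with the defining property $\lambda(B_{\rho(x)}(x))>\rho(x)^\tau/k$; such radii exist along a sequence tending to $0$ by definition of the $\limsup$. The family $\mathcal{F}:=\{B_{\rho(x)}(x):x\in E^\tau_k\}$ covers $E^\tau_k$ with uniformly bounded radii, so Vitali's $5r$-covering lemma produces a countable pairwise disjoint subfamily $\{B_{\rho_i}(x_i)\}_{i\in\N}\subset\mathcal{F}$ satisfying $E^\tau_k\subset\bigcup_{i}B_{5\rho_i}(x_i)$. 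Using $5\rho_i<\delta$, the definition of the Hausdorff pre-measure, and the selection property,
\begin{align*}
	\mathcal{H}^{\tau+\varepsilon}_\delta(E^\tau_k)
	\leq \sum_{i\in\N}(5\rho_i)^{\tau+\varepsilon}
	\leq 5^{\tau+\varepsilon}\delta^{\varepsilon}\sum_{i\in\N}\rho_i^{\tau}
	\leq 5^{\tau+\varepsilon}k\,\delta^{\varepsilon}\sum_{i\in\N}\lambda(B_{\rho_i}(x_i)).
\end{align*}
The disjoint open balls $B_{\rho_i}(x_i)\subset A$ are exactly the input required by the countable superadditivity hypothesis on $\lambda$, so $\sum_i \lambda(B_{\rho_i}(x_i))\leq\lambda\bigl(\bigcup_i B_{\rho_i}(x_i)\bigr)\leq\lambda(A)<\infty$ by monotonicity and finiteness. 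Hence $\mathcal{H}^{\tau+\varepsilon}_\delta(E^\tau_k)\leq 5^{\tau+\varepsilon}k\,\lambda(A)\,\delta^\varepsilon$, and letting $\delta\downarrow 0$ yields $\mathcal{H}^{\tau+\varepsilon}(E^\tau_k)=0$, as required.

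The only subtlety, and the one step I would take care to verify, is that the Vitali selection produces a \emph{disjoint} family of \emph{open} balls so that the superadditivity hypothesis applies directly; without disjointness one would only have subadditivity, which is in the wrong direction. Everything else is a routine assembly: countable stability of $\dimh$, the stratification by level $1/k$, Vitali covering with radius factor $5$, and the crucial geometric estimate $\rho_i^\tau\leq k\,\lambda(B_{\rho_i}(x_i))$ extracted from membership in $E^\tau_k$. No structural assumption on $\lambda$ beyond the stated ones (nonnegativity, monotonicity, finiteness on $A$, countable superadditivity on disjoint open sets) enters the proof, which is why the lemma is applicable in the present paper to energy-type quantities such as $\mathcal{O}\mapsto\int_{\mathcal{O}}(1+|Du|)^{p(x)}\dx$ when bounding $\dimh(\sing_u(\Omega))$.
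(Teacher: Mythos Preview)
Your argument is correct and is essentially the standard proof one finds in Giusti's book. Note that the paper does not actually prove this lemma; it is merely quoted from \cite{giusti}, so there is no in-paper proof to compare against. One small point worth making explicit: when selecting $\rho(x)$ you should also impose $\rho(x)<\operatorname{dist}(x,\partial A)$ so that $B_{\rho(x)}(x)\subset A$ and $\lambda$ is genuinely defined on each ball; since $A$ is open this is automatic for sufficiently small radii and costs nothing.
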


We will treat $V^2$ as if it were convex, since it can be estimated from above and below by the convex map $z\mapsto(1 + |z|)^{p-2}|z|^2$ (see Definition~6.1 in \cite{schmidt_2008}). For $x\in\R^k$ and some $c>1$ there holds
\begin{align}\label{almostconvex}
	c^{-1}|V(z)|^2\leq(1 + |z|)^{p-2}|z|^2\leq c|V(z)|^2.
\end{align}

When decomposing $V$, we use the following estimate, which can be retrieved from \cite{campanato_82a}. 

\begin{lemma}\label{camp82}
Given $\xi,\eta\in\R^k$ and $q>-1$, there exist constants $c_1(q),c_2(q)>0$ such that
\begin{align*}
	c_1(q)(1 + |\xi| + |\eta|)^q\leq\int_{0}^{1}(1 + |\xi + t\eta|)^q\dt \leq c_2(1 + |\xi| + |\eta|)^q.
\end{align*}
\end{lemma}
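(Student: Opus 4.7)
The inequality decomposes into four independent claims according to the sign of $q$ and the direction of the estimate; two of them are immediate. By the triangle inequality $|\xi+t\eta|\leq |\xi|+|\eta|$ for every $t\in[0,1]$, so $1+|\xi+t\eta|\leq 1+|\xi|+|\eta|$; raising to the $q$-th power and integrating yields the upper bound with $c_2=1$ when $q\geq 0$, and (since for $q\leq 0$ the power reverses the inequality) the lower bound with $c_1=1$ when $-1<q\leq 0$. Only the lower bound for $q>0$ and the upper bound for $-1<q<0$ require real work.

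For the lower bound with $q>0$, the plan is to produce a subinterval of $[0,1]$ of length bounded below by a universal constant on which $1+|\xi+t\eta|$ is comparable to $1+|\xi|+|\eta|$. The starting point is the elementary observation
\[
\max\{|\xi|,|\xi+\eta|\}\;\geq\; \tfrac{1}{2}\bigl(|\xi|+|\xi+\eta|\bigr)\;\geq\; \tfrac{1}{4}\bigl(|\xi|+|\eta|\bigr),
\]
which follows from $|\xi|+|\xi+\eta|\geq|\xi|$ and the triangle inequality $|\xi|+|\xi+\eta|\geq|\eta|$. Without loss of generality, suppose $|\xi|\geq \tfrac{1}{4}(|\xi|+|\eta|)$, so in particular $|\eta|\leq 3|\xi|$; the complementary case is handled analogously, producing the good interval near $t=1$. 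The Lipschitz estimate $|\xi+t\eta|\geq |\xi|-t|\eta|$ then gives $1+|\xi+t\eta|\geq \tfrac{1}{2}(1+|\xi|)$ whenever $t\leq \min\{1,(1+|\xi|)/(2|\eta|)\}$, and a short case split on whether $|\eta|\leq 1+|\xi|$ or $|\eta|>1+|\xi|$ shows this interval has length bounded below by $\tfrac{1}{6}$. Integrating yields the lower bound.

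The genuinely delicate case is the upper bound for $-1<q<0$, where $(1+|\xi+t\eta|)^q$ can be arbitrarily large when $\xi+t\eta$ passes near the origin, so no pointwise comparison with $(1+|\xi|+|\eta|)^q$ is available. The plan is to exploit the reverse triangle inequality $|\xi+t\eta|\geq \bigl|\,|\xi|-t|\eta|\,\bigr|$, which (using $q<0$) reduces the problem to the one-dimensional integral
\[
\int_0^1(1+|\xi+t\eta|)^q\,dt\;\leq\; \int_0^1\bigl(1+\bigl|\,|\xi|-t|\eta|\,\bigr|\bigr)^q\,dt \;=\; \frac{1}{|\eta|}\int_{|\xi|-|\eta|}^{|\xi|}(1+|s|)^q\,ds,
\]
which admits a closed-form antiderivative. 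Splitting on the sign of $|\xi|-|\eta|$ and applying the elementary pointwise inequality $1-(1-y)^{q+1}\leq y$ for $y\in[0,1]$ (immediate from the convexity of $y\mapsto 1-(1-y)^{q+1}$ when $-1<q<0$, together with its boundary values $0$ and $1$) yields the bound with constant of the form $c_2(q)=c/(q+1)$.

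The principal obstacle is this last case: the estimate cannot come from a pointwise bound but must exploit averaging, and the factor $(q+1)^{-1}$ that appears reflects the unavoidable blow-up of the integrand as $q\searrow-1$.
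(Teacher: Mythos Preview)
Your argument is correct. The paper does not prove this lemma at all; it merely states the estimate and cites Campanato's original work \cite{campanato_82a} for the proof, so there is no in-paper argument to compare yours against. Your approach---trivial pointwise bounds for two of the four cases, a good-subinterval argument for the lower bound when $q>0$, and the reverse-triangle reduction to a one-dimensional integral for the delicate upper bound when $-1<q<0$---is the standard elementary route, and the convexity trick $1-(1-y)^{q+1}\le y$ cleanly handles both sign subcases (taking $y=|\eta|/(1+|\xi|)$ when $|\xi|\ge|\eta|$ and $y=|\eta|/(1+|\eta|)$ when $|\xi|<|\eta|$). Your remark that the constant necessarily blows up like $(q+1)^{-1}$ as $q\searrow -1$ is also correct and worth noting.
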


\subsection{Affine maps}

For $\xo\in\RN, \rho>0$, and $u\in L^2(\brhoxo,\RN)$, we have a unique affine function $\ell_{\xo,\rho}:\Rn\to\RN$ of the form
\begin{align*}
	\lrhoxo(\xo) + D\lrhoxo(x-\xo)
\qquad\mbox{that minimises}\qquad
\ell\mapsto\fint_{\brhoxo}|u-\ell|^2\dx.
\end{align*}
Here, $\lrhoxo(\xo)\in\RN$ and $D\lrhoxo\in\Hom(\Rn\otimes\RN)$. Indeed, by direct calculation there holds
\begin{align}\label{lform}
	\lrhoxo(\xo) = \urho
\qquad\mbox{and}\qquad
D\lrhoxo = \frac{n+2}{\rho^2}\fint_{\brhoxo}u\otimes(x-\xo)\dx.
\end{align}

To compare minimising affine functions on concentric balls, we have the following (see Lemma~2 in \cite{kronz_2002} for the case $p=2$, Lemma~2.2 from \cite{habermann_2013} for general $p$).
\begin{lemma}\label{affinegrads}
Fix $p\geq1$, $\theta\in(0,1)$, and $u\in L^p(\brhoxo,\RN)$. Denote by $\lrhoxo$ and $\lthetarhoxo$ the minimising affine functions on balls of radius $\rho$ and $\theta\rho$ of the form \eqref{lform}. Then we can estimate
\begin{align}\label{dlest1}
	|D\lrhoxo - D\lthetarhoxo|^p\leq \left(\frac{n+2}{\theta\rho}\right)^p\fint_{\bthetarhoxo}|u-\lrhoxo|^p\dx.
\end{align}
More generally, there holds for any affine $\Upsilon:\Rn\to\RN$
\begin{align}\label{dlest2}
	|D\lrhoxo - D\Upsilon|^p\leq \left(\frac{n+2}{\rho}\right)^p\fint_{\brhoxo}|u-\Upsilon|^p\dx.
\end{align}
\end{lemma}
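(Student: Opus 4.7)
The plan is to prove the more general estimate \eqref{dlest2} first, then deduce \eqref{dlest1} as a specialisation. The key observation is that the formula \eqref{lform} exhibits the map $u\mapsto D\ell_{\xo,\rho}$ as a \emph{linear} functional of $u$ given by an explicit integral, and moreover that on affine inputs this map acts as the identity on the derivative. Indeed, using that on $B_\rho(\xo)$ one has $\fint (x-\xo)_i\dx=0$ and $\fint (x-\xo)_i(x-\xo)_j\dx=\frac{\rho^2}{n+2}\delta_{ij}$, a direct substitution confirms that if $\Upsilon(x)=\Upsilon(\xo)+D\Upsilon(x-\xo)$, then applying the formula in \eqref{lform} to $\Upsilon$ returns $D\Upsilon$.

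To prove \eqref{dlest2}, I would apply \eqref{lform} to the function $u-\Upsilon\in L^p(\brhoxo,\RN)\subset L^2$-on-small-scales (after truncation if needed, though for the pointwise identity the $L^1$ integrability coming from $L^p$ with $p\geq1$ is enough). Linearity together with the fixed-point property of the previous paragraph yields
\begin{align*}
    D\lrhoxo - D\Upsilon = \frac{n+2}{\rho^2}\fint_{\brhoxo}(u-\Upsilon)\otimes(x-\xo)\dx.
\end{align*}
Taking norms, bounding $|x-\xo|\leq\rho$ on $\brhoxo$, and applying Jensen's inequality to the convex function $t\mapsto t^p$ (valid since $p\geq1$) immediately gives
\begin{align*}
    |D\lrhoxo - D\Upsilon|^p \leq \left(\frac{n+2}{\rho}\right)^p\left(\fint_{\brhoxo}|u-\Upsilon|\dx\right)^p \leq \left(\frac{n+2}{\rho}\right)^p\fint_{\brhoxo}|u-\Upsilon|^p\dx,
\end{align*}
which is \eqref{dlest2}.

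For \eqref{dlest1} I would simply specialise \eqref{dlest2} to the concentric ball $\bthetarhoxo$ of radius $\theta\rho$, taking $\Upsilon=\lrhoxo$; this is legitimate because $\lrhoxo$ is affine on all of $\Rn$, and the conclusion reads precisely as the claimed estimate. No step here presents a genuine obstacle: the whole argument rests on the explicit mean-value formulas for the first two moments of $(x-\xo)$ on a ball and on the linearity of the projection $u\mapsto\ell_{\xo,\rho}$; the only mild care needed is to verify the fixed-point property of this projection on affine inputs, which is the content of the moment computation above.
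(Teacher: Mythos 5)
Your argument is correct. The paper itself supplies no proof of Lemma~\ref{affinegrads} (it cites Lemma~2 of \cite{kronz_2002} and Lemma~2.2 of \cite{habermann_2013}), but the route you take -- linearity of $u\mapsto D\lrhoxo$ from the explicit formula \eqref{lform}, the fixed-point property on affine inputs via the moment identities $\fint_{\brhoxo}(x-\xo)\dx=0$ and $\fint_{\brhoxo}(x-\xo)_i(x-\xo)_j\dx=\frac{\rho^2}{n+2}\delta_{ij}$, then $|x-\xo|\leq\rho$ and Jensen -- is exactly the standard argument in those references, and specialising \eqref{dlest2} to radius $\theta\rho$ with $\Upsilon=\lrhoxo$ does give \eqref{dlest1}. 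One small infelicity: the aside that $L^p\subset L^2$ on small scales is backwards for $1\leq p<2$ (on a bounded ball it is $L^2\subset L^p$), but it does not affect anything since, as you note, the formula \eqref{lform} only needs $u-\Upsilon\in L^1(\brhoxo)$, which $u\in L^p$ with $p\geq1$ gives.
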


The following quasi-minimisation property of $\lrhoxo$ appears as Corollary~2.4 in \cite{habermann_2013}.
\begin{lemma}\label{minil}
Fix $p\geq1$, $\lambda>0$, $u\in L^p(\brhoxo,\RN)$ and take $\lrhoxo$ defined in \eqref{lform}. Then for all affine $\Upsilon:\Rn\to\RN$ there holds
\begin{align*}
	\fint_{\brhoxo}|u-\lrhoxo|^p\dx\leq c\fint_{\brhoxo}|u-\Upsilon|^p\dx,
\end{align*}
and
\begin{align*}
	\fint_{\brhoxo}\left|V\left(\lambda\left(u-\lrhoxo\right)\right)\right|^2\dx\leq c\fint_{\brhoxo}\left|V\left(\lambda\left(u-\Upsilon\right)\right)\right|^2\dx.
\end{align*}
Here, the constant $c$ depends only on $n$ and $p$, and the dependence on $p$ is continuous.
\end{lemma}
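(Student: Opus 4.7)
The plan is to reduce both estimates to elementary triangle-inequality manipulations after exploiting the explicit formulas in \eqref{lform}. The key observation is that although $\lrhoxo$ is defined as the $L^2$-minimiser, the averaged formulas for $\lrhoxo(\xo)$ and $D\lrhoxo$ are linear in $u$, so for any affine $\Upsilon(x)=\Upsilon(\xo)+D\Upsilon(x-\xo)$ the difference $\lrhoxo-\Upsilon$ can be written coefficient-by-coefficient as an average of $u-\Upsilon$. Using that $\brhoxo$ is symmetric about $\xo$ one gets $\urho-\Upsilon(\xo)=\fint_{\brhoxo}(u-\Upsilon)\dx$ and $D\lrhoxo-D\Upsilon=\frac{n+2}{\rho^2}\fint_{\brhoxo}(u-\Upsilon)\otimes(x-\xo)\dx$. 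Combining these with $|x-\xo|\le\rho$ on $\brhoxo$ yields the pointwise bound
\begin{align*}
	|\lrhoxo(x)-\Upsilon(x)|\le(n+3)\fint_{\brhoxo}|u-\Upsilon|\dy\qquad\text{for every }x\in\brhoxo.
\end{align*}

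For the first inequality I would apply Jensen to the right-hand side of this pointwise estimate (with exponent $p\ge 1$) to obtain $\fint_{\brhoxo}|\lrhoxo-\Upsilon|^p\dx\le(n+3)^p\fint_{\brhoxo}|u-\Upsilon|^p\dx$. Adding and subtracting $\Upsilon$ inside $|u-\lrhoxo|^p$ and using the elementary inequality $|a+b|^p\le 2^{p-1}(|a|^p+|b|^p)$ then immediately gives the first claim with $c=c(n,p)$, continuous in $p$.

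For the $V$-estimate I would first use \vprop(ii) to split
\begin{align*}
	|V(\lambda(u-\lrhoxo))|^2\le c(p)\Bigl(|V(\lambda(u-\Upsilon))|^2+|V(\lambda(\Upsilon-\lrhoxo))|^2\Bigr),
\end{align*}
so only the second term needs work. Multiplying the pointwise bound above by $\lambda$ and using \vprop(i) (with $t=n+3$) gives, for each $x\in\brhoxo$,
\begin{align*}
	|V(\lambda(\Upsilon(x)-\lrhoxo(x)))|^2
	\le c(n,p)\,\Bigl|V\Bigl(\lambda\fint_{\brhoxo}|u-\Upsilon|\dy\Bigr)\Bigr|^2.
\end{align*}
By \eqref{almostconvex}, $|V(\cdot)|^2$ is comparable to $\psi(|\cdot|)$ where $\psi(t)=(1+t)^{p-2}t^2$. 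A direct calculation of $\psi''(t)$ shows $\psi''(t)=(1+t)^{p-4}\bigl[2+4(p-1)t+p(p-1)t^2\bigr]\ge 0$ for $p\ge 1$ and $t\ge 0$, so $\psi$ is convex on $[0,\infty)$. Jensen then yields $\psi\bigl(\lambda\fint|u-\Upsilon|\bigr)\le\fint\psi(\lambda|u-\Upsilon|)$, and reverting back via \eqref{almostconvex} gives $|V(\lambda\fint|u-\Upsilon|)|^2\le c(p)\fint|V(\lambda(u-\Upsilon))|^2\dy$. Averaging the previous display over $\brhoxo$ and combining with the splitting estimate produces the second claim with constant depending only on $n$ and $p$ (continuously in $p$).

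The main technical point is the last step: one must verify that $\psi(t)=(1+t)^{p-2}t^2$ really is convex in $t\ge 0$ across the whole range $p\ge 1$ (so that Jensen applies in both the sub- and super-quadratic regimes at once) rather than just using the abstract comment after \eqref{almostconvex} that $V^2$ is ``essentially'' convex. Once that elementary computation is in hand, everything else is a routine triangle-inequality-plus-Jensen argument, and no $L^2$-minimality of $\lrhoxo$ is used beyond the explicit formulas \eqref{lform}.
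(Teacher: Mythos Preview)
The paper does not supply its own proof here; it simply records the statement and cites Corollary~2.4 of \cite{habermann_2013}. Your argument is correct and is the standard one: the explicit formulas \eqref{lform} (together with the symmetry of $\brhoxo$, which gives $\fint_{\brhoxo}\Upsilon=\Upsilon(\xo)$ and $\frac{n+2}{\rho^2}\fint_{\brhoxo}\Upsilon\otimes(x-\xo)\dx=D\Upsilon$) yield the uniform bound $|\lrhoxo(x)-\Upsilon(x)|\le(n+3)\fint_{\brhoxo}|u-\Upsilon|\dy$ on $\brhoxo$, after which both inequalities reduce to Jensen's inequality --- for $t\mapsto t^p$ in the first, and for the convex map $\psi(t)=(1+t)^{p-2}t^2$ via \eqref{almostconvex} in the second. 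Your explicit computation $\psi''(t)=(1+t)^{p-4}\bigl[2+4(p-1)t+p(p-1)t^2\bigr]\ge0$ is exactly what the paper is alluding to when it says at \eqref{almostconvex} that $|V|^2$ can be treated as convex.

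One small clarification: when you pass from $|V(\lambda(\Upsilon-\lrhoxo))|^2$ to $c(n,p)\bigl|V\bigl(\lambda\fint|u-\Upsilon|\bigr)\bigr|^2$, property \vprop(i) alone does not suffice, since $\lambda(\Upsilon-\lrhoxo)$ is not literally $(n+3)$ times a fixed vector. You are tacitly using that $r\mapsto|V(re)|$ (equivalently $\psi$) is nondecreasing on $[0,\infty)$, so that the inequality of moduli passes through $V$ before you pull out the factor $n+3$. This monotonicity is immediate from $\psi'(t)=(1+t)^{p-3}t(pt+2)\ge0$, so nothing is genuinely missing.
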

A corollary of Lemma~\ref{affinegrads} is the following, adapted from \cite{habermann_2013} and proved in \cite{vdh_phd}.
\begin{corollary}\label{gradscales}
Fix $p\geq1$, $\theta\in(0,1)$ and $u\in L^p(\brhoxo,\RN)$. Denote by $\lrhoxo$ and $\lthetarhoxo$ the minimising affine functions on concentric balls of radius $\rho$ and $\theta\rho$ of the form \eqref{lform}. If the smallness condition
\begin{align*}
	\fint_{B_{\rho}(\xo)}\left|V\left(\frac{u - \lrhoxo}{\rho(1 + |D\lrhoxo|)}\right)\right|^2\dx\leq\left(\frac{1}{4}\frac{\theta^{n+1}}{n+2}\right)^2
\quad\text{holds, then}\quad
	\frac{1 + |D\lrhoxo|}{1 + |D\lthetarhoxo|} &\leq 2.
\end{align*}
\end{corollary}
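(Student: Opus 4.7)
The plan is to prove the stronger quantitative estimate
\[|D\lrhoxo - D\lthetarhoxo| \leq \tfrac{1}{2}(1 + |D\lrhoxo|),\]
after which the reverse triangle inequality yields $1 + |D\lthetarhoxo| \geq \tfrac{1}{2}(1 + |D\lrhoxo|)$, which rearranges to the desired conclusion.

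First I would apply Lemma~\ref{affinegrads} with exponent~$1$ and then enlarge the integration domain from $\bthetarhoxo$ to $\brhoxo$ (picking up the factor $\theta^{-n}$), to obtain
\begin{align*}
|D\lrhoxo - D\lthetarhoxo| \leq \frac{n+2}{\theta^{n+1}\rho}\fint_{\brhoxo}|u - \lrhoxo|\dx.
\end{align*}
Normalising via $\varphi := (u - \lrhoxo)/[\rho(1 + |D\lrhoxo|)]$ and writing $\delta := \theta^{n+1}/[4(n+2)]$, the reduction is equivalent to the $L^1$ bound $\fint_{\brhoxo}|\varphi|\dx \leq 2\delta$, which must be extracted from the hypothesis $\fint_{\brhoxo}|V(\varphi)|^2\dx \leq \delta^2$.

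The main technical step is therefore this passage from $V$-smallness to $L^1$ smallness. In the superquadratic regime $p \geq 2$, Lemma~\ref{vproperties}(iv) gives $|\varphi| \leq |V(\varphi)|$ pointwise, and H\"older's inequality immediately produces $\fint_{\brhoxo}|\varphi|\dx \leq \bigl(\fint_{\brhoxo}|V(\varphi)|^2\dx\bigr)^{1/2} \leq \delta$. The subquadratic case $1 < p < 2$ requires more care, since $V$ scales as $|\varphi|^{p/2}$ for $|\varphi|>1$. Here I would split $\brhoxo = A \cup B$ with $A := \{|\varphi|\leq 1\}$ and $B := \{|\varphi|>1\}$. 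Lemma~\ref{vproperties}(iv) yields $|\varphi|^2 \leq |V(\varphi)|^2$ on $A$ (the factor $2^{(p-2)/2} \leq 1$ since $p \leq 2$), while on $B$ it gives $|\varphi|^p \leq |V(\varphi)|^2$, and since $p \geq 1$ and $|\varphi|>1$ we further have $|\varphi| \leq |\varphi|^p$. Cauchy-Schwarz on $A$ and direct integration on $B$ then combine to give
\begin{align*}
\fint_{\brhoxo}|\varphi|\dx \leq \left(\fint_{\brhoxo}|V(\varphi)|^2\dx\right)^{1/2} + \fint_{\brhoxo}|V(\varphi)|^2\dx \leq \delta + \delta^2 \leq 2\delta,
\end{align*}
where the final inequality uses $\delta \leq 1/8$.

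Substituting back into the first estimate yields $|D\lrhoxo - D\lthetarhoxo| \leq \tfrac{1}{2}(1 + |D\lrhoxo|)$, completing the reduction. The principal obstacle is the subquadratic case, where the nonlinear scaling of $V$ for large arguments forces the case split on the size of $|\varphi|$; once that is handled, the proof is essentially algebraic bookkeeping that tracks the explicit constant through the affine minimisation estimate of Lemma~\ref{affinegrads}.
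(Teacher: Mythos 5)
Your proof is correct and follows essentially the same route as the paper's: Lemma~\ref{affinegrads} to bound $|D\lrhoxo - D\lthetarhoxo|$, enlargement of the domain of integration by the factor $\theta^{-n}$, and Lemma~\ref{vproperties}(iv) together with a split into the sets where the renormalised excess is $\le 1$ or $>1$ in the subquadratic case, passing from $V$-smallness to the required $L^1$ bound. The only cosmetic variation is that on the large-excess set you use the pointwise bound $|\varphi|\le|\varphi|^p\le|V(\varphi)|^2$ and integrate directly, rather than applying H\"older with exponent $p$ before invoking the same $V$-property as the paper does; both routes yield the same bound $\tfrac{1}{2}\,\theta^{n+1}/(n+2)$.
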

\begin{proof}[Proof of Corollary~\ref{gradscales}:]
We can estimate via \eqref{dlest2} with $p=1$
\begin{align}
	1 + |D\lrhoxo| &\leq 1 + |D\lthetarhoxo| + |D\lrhoxo - D\lthetarhoxo|\nonumber\\
		&\leq 1 + |D\lthetarhoxo| + \left(\frac{n+2}{\theta\rho}\right)\fint_{\bthetarhoxo}|u-\lrhoxo|\dx\nonumber\\
		&\leq 1 + |D\lthetarhoxo| + \left(\frac{n+2}{\theta^{n+1}}\right)\fint_{\brhoxo}\left|\frac{u-\lrhoxo}{\rho(1 + |D\lrhoxo|)}\right|\dx(1 + |D\lrhoxo|).\label{gradscaleint}
\end{align}
Now, when $p\geq2$ we can calculate via H\"older's inequality, \vprop$(iii)$ and the smallness condition
\begin{align*}
	\fint_{\brhoxo}\left|\frac{u-\lrhoxo}{\rho(1 + |D\lrhoxo|)}\right|\dx &\leq \left(\fint_{\brhoxo}\left|\frac{u-\lrhoxo}{\rho(1 + |D\lrhoxo|)}\right|^2\dx\right)^{\frac{1}{2}}\\
		&\leq \left(\fint_{\brhoxo}\left|V\left(\frac{u-\lrhoxo}{\rho(1 + |D\lrhoxo|)}\right)\right|^2\dx\right)^{\frac{1}{2}}\\
		&\leq \frac{1}{2}\left(\frac{\theta^{n+1}}{n+2}\right).
\end{align*}
Writing $T_+ = \{x\in\brhoxo:u-\lrhoxo\geq\rho(1 + |D\lrhoxo|)\}$ and similarly define the set $T_- = \{x\in\brhoxo:u-\lrhoxo\leq\rho(1 + |D\lrhoxo|)\}$, for $1<p<2$ we can use the same reasoning to find
\begin{align*}
	\fint_{\brhoxo}&\left|\frac{u-\lrhoxo}{\rho(1 + |D\lrhoxo|)}\right|\dx \\
		&\leq \left(\fint_{\brhoxo}\left|\frac{u-\lrhoxo}{\rho(1 + |D\lrhoxo|)}\right|^2\chi_{_{T_-}}\dx\right)^{\frac{1}{2}}
			+\left(\fint_{\brhoxo}\left|\frac{u-\lrhoxo}{\rho(1 + |D\lrhoxo|)}\right|^q\chi_{_{T_+}}\dx\right)^{\frac{1}{q}}\\
		&\leq \left(\fint_{\brhoxo}\left|V\left(\frac{u-\lrhoxo}{\rho(1 + |D\lrhoxo|)}\right)\right|^2\chi_{_{T_-}}\dx\right)^{\frac{1}{2}}
			+\left(\fint_{\brhoxo}\left|V\left(\frac{u-\lrhoxo}{\rho(1 + |D\lrhoxo|)}\right)\right|^2\chi_{_{T_+}}\dx\right)^{\frac{1}{q}}\\
		&\leq \frac{1}{2}\left(\frac{\theta^{n+1}}{n+2}\right).
\end{align*}
Plugging these estimates into \eqref{gradscaleint} concludes the proof.
\end{proof}

\subsection*{Excess Functionals}
We make use of the following renormalised zeroth-order and first-order functionals. For any $\xo\in\Omega$ with $\brhoxo\subset\subset\Omega$, fixed $u\in u\in W^{1,p(\cdot)}(\Omega,\RN)$, and affine map $\ell:\Rn\to\RN$ with $D\ell\in\RnN$, we set
\begin{align*}
	\Psi(\xo,\ell,\rho)&:=\fint_{\brhoxo}\bigg|V\bigg(\frac{u - \ell}{\rho(1 + |D\ell|)}\bigg)\bigg|^2\dx,\qquad\text{and}\qquad
	\Phi(\xo,D\ell,\rho):=\fint_{\brhoxo}\bigg|V\bigg(\frac{Du - D\ell}{1 + |D\ell|}\bigg)\bigg|^2\dx.
\end{align*}

\subsection*{The $\mathcal{A}$-harmonic approximation lemma}
The workhorse of this theorem is the $\A$-harmonic approximation lemma. First appearing in the literature in \cite{duzaar_grotowski_2000} in the context of regularity theory for partial differential equations, this version is taken from \cite{beck_2011}.

Let $\A:\Hom(\Rn\otimes\RN)\times \Hom(\Rn\otimes\RN)\to\R$ be an elliptic bilinear form with constant coefficients. We say a function $h\in W^{1,1}(\brhoxo,\RN)$ is $\mathcal{A}$-harmonic if for all $\phi\in C^1_0(\brhoxo,\RN)$,
		\begin{align*}
			\fint_{\brhoxo}\A(Dh,D\phi)\, dx &= 0.
		\end{align*}

\begin{lemma}[$\A$-harmonic approximation]\label{aharmonic}
Fix $0<\nu\leq L <\infty$, $1<p_2<\infty$, and let $\A:\Hom(\Rn\otimes\RN)\times \Hom(\Rn\otimes\RN)\to\R$ be a bilinear form, which is elliptic in the sense of Legendre-Hadamard with ellipticity constant $\nu$ and upper-bound $L$. Given $\varepsilon>0$ there exists a $\delta_1 = \delta_1(n,N,p,L/\nu,\varepsilon)$ such that for all $\kappa\in(0,1]$ and every $w\in W^{1,p_2}(\brhoxo,\RN)$ satisfying
		\begin{align*}
			\fint_{\brhoxo}|V(Dw)|^2\, dx &\leq \kappa^2,\qquad\text{and}\qquad
			\left|\fint_{\brhoxo}\A(Dw,D\phi)\, dx\right|\leq \kappa\delta\sup_{\brhoxo}|D\phi|
		\end{align*}
for all $\phi\in C^1_0(\brhoxo,\RN)$, there exists an $\A$-harmonic function $h\in W^{1,p_2}(\brhoxo,\RN)$ satisfying
		\begin{align}\label{harmpriori}
			\sup_{\bhalfrhoxo}\left(|Dh| + \rho|D^2h|\right)\leq c_h\quad\mbox{and}\quad\fint_{\bhalfrhoxo}\left|V\left(\frac{w - \kappa h}{\rho}\right)\right|^2\, dx\leq\kappa^2\varepsilon.
		\end{align}
\end{lemma}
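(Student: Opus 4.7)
The argument follows the classical compactness scheme of Duzaar--Grotowski--Steffen, adapted to the $V$-function setting. I would argue by contradiction. After translating and rescaling to the unit ball (so $x_0 = 0$ and $\rho = 1$), suppose the claim fails: there exist $\varepsilon_0 > 0$ and sequences $\kappa_j \in (0,1]$, $\delta_j \downarrow 0$, and $w_j \in W^{1,p_2}(B_1, \mathbb{R}^N)$ satisfying
\[
  \fint_{B_1} |V(Dw_j)|^2 \, dx \leq \kappa_j^2, \qquad \left| \fint_{B_1} \mathcal{A}(Dw_j, D\phi) \, dx \right| \leq \kappa_j \delta_j \sup_{B_1} |D\phi|
\]
for all $\phi \in C^1_0(B_1, \mathbb{R}^N)$, yet such that for no $\mathcal{A}$-harmonic $h$ satisfying the a priori bound does the second conclusion of \eqref{harmpriori} hold with constant $\varepsilon_0$. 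Normalize by setting $v_j := (w_j - (w_j)_{B_1})/\kappa_j$, so that $(v_j)_{B_1} = 0$.

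The first task is to extract a weak limit. Using \vprop(iv) together with Lemma~\ref{improv}, the bound on $V(Dw_j)$ yields uniform $L^{p_2}$ control on $Dv_j$ (with a truncation argument needed when $p_2 < 2$, see below), and then Poincar\'e gives a uniform $W^{1,p_2}$ bound on $v_j$. By Rellich--Kondrachov, pass to a subsequence with $v_j \rightharpoonup h$ in $W^{1,p_2}$ and $v_j \to h$ in $L^{p_2}$. Dividing the near-harmonicity assumption by $\kappa_j$ gives
\[
  \left| \fint_{B_1} \mathcal{A}(Dv_j, D\phi) \, dx \right| \leq \delta_j \sup_{B_1} |D\phi| \longrightarrow 0,
\]
and bilinearity of $\mathcal{A}$ together with weak $L^{p_2}$ convergence of $Dv_j$ shows that $h$ is $\mathcal{A}$-harmonic on $B_1$.

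Standard interior regularity for constant-coefficient Legendre--Hadamard elliptic systems gives $h \in C^\infty(B_{3/4})$ with a bound $\sup_{B_{1/2}}(|Dh| + |D^2 h|) \leq c_h$ depending only on $n$, $N$, $p_2$, and $L/\nu$. Consider $\tilde h_j := h + (w_j)_{B_1}/\kappa_j$; this remains $\mathcal{A}$-harmonic with identical derivative bounds, and by construction $w_j - \kappa_j \tilde h_j = \kappa_j(v_j - h)$. Using \vprop(i) (so that $|V(\kappa_j\xi)|^2 \leq \kappa_j^2 |V(\xi)|^2$ for $\kappa_j \leq 1$ and $p_2 \geq 1$) together with the strong $L^{p_2}$ convergence $v_j \to h$ and the a priori $L^\infty$ bound on $h$, I expect
\[
  \fint_{B_{1/2}} |V(w_j - \kappa_j \tilde h_j)|^2 \, dx \leq C \kappa_j^2 \fint_{B_{1/2}} |V(v_j - h)|^2 \, dx = o(\kappa_j^2),
\]
which for $j$ large contradicts the assumed failure of the conclusion with $h = \tilde h_j$.

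The main technical obstacle is the subquadratic case $p_2 \in (1,2)$: the bound on $V(Dw_j)$ yields $L^2$ control only on the truncated part $\min\{|Dw_j|, 1\}$ and $L^{p_2}$ control on its complement, so naively $\fint |Dv_j|^{p_2}$ could be of order $\kappa_j^{p_2 - 2}$ and diverge as $\kappa_j \downarrow 0$. The remedy is a set-splitting: decompose $B_1 = \{|\kappa_j Dv_j| \leq 1\} \cup \{|\kappa_j Dv_j| > 1\}$ and apply \vprop(iv) on each piece to obtain uniform bounds of the form $\fint \min\{|Dv_j|^2, |Dv_j|^{p_2}\}\, dx \leq C$, which together with Lemma~\ref{improv} is enough both to extract the weak limit and to identify it as $\mathcal{A}$-harmonic. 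The remaining bookkeeping involves tracking the dependence of $\delta_1$ on $\varepsilon$ through the standard quantitative form of the contradiction argument.
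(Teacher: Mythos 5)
The paper does not prove Lemma~\ref{aharmonic}; it states it and cites Beck's 2011 Quart.\ J.\ Math.\ paper, so there is no internal proof to compare against and I am evaluating your reconstruction of an external result on its own terms. The contradiction--compactness framework you outline is the standard Duzaar--Grotowski--Steffen skeleton and most individual pieces are used correctly: the bilinearity of $\mathcal{A}$ together with a weak limit identifies $h$ as $\mathcal{A}$-harmonic, interior estimates for constant-coefficient Legendre--Hadamard systems give the a priori bound in \eqref{harmpriori}, and the scaling $|V(\kappa\xi)|\leq\kappa|V(\xi)|$ for $\kappa\leq1$ from Lemma~\ref{vproperties}$(i)$ is the right tool for the final comparison.

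However, you have misdiagnosed where the difficulty lies, and the case you set aside is exactly where your argument breaks. You claim $\fint_{B_1}|V(Dw_j)|^2\leq\kappa_j^2$ gives a uniform $L^{p_2}$ bound on $Dv_j=Dw_j/\kappa_j$ and flag only $p_2<2$ as needing care. In fact the subquadratic case is fine: splitting over $\{|Dw_j|\leq1\}$ and $\{|Dw_j|>1\}$ and using Lemma~\ref{vproperties}$(iv)$ yields $\fint_{\{|Dw_j|\leq1\}}|Dv_j|^2\leq C$ and $\fint_{\{|Dw_j|>1\}}|Dv_j|^{p_2}\leq C\kappa_j^{2-p_2}\leq C$ because $\kappa_j\leq1$ and $2-p_2\geq0$, so $Dv_j$ is uniformly bounded in $L^{p_2}$. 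It is the \emph{superquadratic} case $p_2>2$ that is genuinely problematic: the same computation gives only $\fint_{B_1}|Dv_j|^{p_2}\leq C\kappa_j^{2-p_2}$, which diverges as $\kappa_j\downarrow0$. Concretely, a $w_j$ with $|Dw_j|\equiv M_j$ on a set of measure comparable to $\kappa_j^2M_j^{-p_2}$ and vanishing elsewhere satisfies the $V$-energy hypothesis yet gives $\fint|Dv_j|^{p_2}\sim\kappa_j^{2-p_2}\to\infty$. The best uniform bound for $p_2>2$ is $\fint_{B_1}|Dv_j|^2\leq C$, which does let you extract a weak $W^{1,2}$ limit $h$ and identify it as $\mathcal{A}$-harmonic; but your final step needs $\fint_{B_{1/2}}|V(v_j-h)|^2\to0$, and for $p_2>2$ that is comparable to $\fint|v_j-h|^2+\fint|v_j-h|^{p_2}$. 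Rellich from $W^{1,2}$ gives strong $L^q$ convergence only for $q<2n/(n-2)$, so the argument does not close for $p_2\geq 2n/(n-2)$ when $n\geq3$. Repairing this requires a genuinely extra ingredient — most commonly a Caccioppoli plus reverse-H\"older (Gehring) estimate for the nearly $\mathcal{A}$-harmonic sequence to upgrade the compactness, or else abandoning compactness entirely in favour of the direct Lipschitz-truncation argument of Diening--Lengeler--Stroffolini--Verde that the author cites in the opening of Section~2.
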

Here $V = V_{p_2}$, and the constant $c$ depends only on $n,N,p_2,$ and the ratio $\frac{L}{\nu}$.

\section{Higher Integrability}
Partial continuity results such as Theorem~\ref{vmoreg} require the higher integrability of the gradient of weak solutions to \eqref{vmoweaka}. The proof of these results requires the usual $\log$-H\"older condition and energy bounds \eqref{vmomodfinite}, as well as the following restrictions on the domain.

Taking $\xo\in\Omega$, fix $\delta>0$ (to be determined in a moment) and $\rho_0>0$ such that
\begin{align}\label{rhomega}
\begin{cases}\begin{array}{rll}
\omega_p(8n\rho_0)&\leq \sqrt{\frac{n + 1}{n}} - 1&\\
 0\leq\omega_p(\rho)\log\left(\frac{1}{\rho}\right)&\leq L	\quad &\mbox{for all } \rho\leq\rho_0\\
\omega_p(\rho)&\leq\frac{1}{4}\delta	\quad &\mbox{for all } \rho\leq\rho_0.
\end{array}\end{cases}
\end{align}

Now for each $\rho<\rhoo$, define
\begin{align*}
	p_1(\xo) := \inf_{\brhoxo} p(x),\qquad\text{and}\qquad p_2(\xo) := \sup_{\brhoxo} p(x),
\end{align*}
and set
\begin{align*}
	p_m(\xo) := \inf_{B_{\rhoo}(\xo)} p(x),\qquad\text{and}\qquad p_M(\xo) := \sup_{B_{\rhoo}(\xo)} p(x).
\end{align*}
To ease notation we will suppress the dependence on $\xo,\rho$ in these quantities and consider only a model case. Now \eqref{rhomega} lets us calculate
\begin{align*}
	 p_2-p_1\leq p_M-p_m\leq\omega_p(\rhoo)\leq\frac{1}{4}\delta.
\end{align*}
Hence
\begin{align}\label{guize}
	p_2(1 + \frac{1}{2}\delta)\leq p_M(1 + \frac{1}{2}\delta)\leq p_m(1 + \delta) \leq p_1(1 + \delta) \leq p(x)(1 + \delta),
\end{align}
for all $x\in B_{\rhoo}(\xo)$ and $0<\rho\leq\rhoo$. In view of \eqref{guize} we can suppress dependence on $p_2$, since we always consider $\rho<\rhoo$.

We are now in a position to state the higher integrability result for $Du$. First proved by Zhikov in \cite{zhikov_1997}, the version that we will use is adapted from Remark~3.1 in \cite{habermann_2013}.

\begin{lemma}\label{frozenexponent}
Let $u\in W^{1,1}(\Omega,\RN)$ with $Du\in L^{p(\cdot)}\big(\Omega,\Hom(\Rn\otimes\RN)\big)$ and $p:\Omega\to\R$ satisfying \eqref{vmomodfinite} be a weak solution to \eqref{vmoweaka}, where $a$ satisfies \emph{\vmoellipticity--\vmogrowth} and \emph{\vmocontinu}, $b$ satisfies \emph{\vmocontrol}. Then there exists a $\tilde\rho>0$ and a $\delta>0$ such that for any $\theta\in(0,\frac{1}{2})$, $p_0\in[p_1,p_2(1 + \frac{\delta}{2})]$ and $p\in[p_1,p_2]$, whenever $B_{2\rho}(\xo)\subset B_{\tilde\rho}(\xo)\subset\Omega$, there holds
\begin{align*}
	\left(\fint_{B_{\theta\rho}(\xo)}|Du|^{p_0}\dx\right)^{\frac{1}{p_0}}\leq c\left(\fint_{B_{\rho}(\xo)}1 + |Du|^{p(x)}\dx\right)^{\frac{1}{p}},
\end{align*}
where $c = c(n,N,\gamma_1,\gamma_2,L/\nu,E,\theta)$, $\tilde\rho = \tilde\rho(n,N,\gamma_1,\gamma_2,L/\nu,E,\omega_p)$ and $\delta = \delta(n,N,\gamma_1,\gamma_2,L/\nu,E)$. The dependence on $\theta$ is such that the constant blows up as $\theta \to 0$. Note that we could take $\theta\in(0,1)$ if we allow the constant to blow up as $\theta\to1$.
\end{lemma}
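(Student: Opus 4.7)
The plan is to obtain the estimate via a reverse Hölder inequality followed by a Gehring-type iteration, with the log-Hölder condition \eqref{vmomodfinite} used in a crucial way to bridge the gap between the frozen exponent $p_0$ and the variable exponent $p(x)$. First I would derive a Caccioppoli-type estimate on concentric balls $B_{\rho/2}(x_0)\subset B_\rho(x_0)$ by testing the weak formulation of \eqref{vmoweaka} with $\phi = \eta^{p_M}(u - (u)_{x_0,\rho})$ for a standard cutoff $\eta$. Using \vmoellipticity, \vmogrowth\ for the principal part, Young's inequality to absorb the gradient of $u$ on the right, and \vmocontrol\ together with Sobolev to handle the inhomogeneity $b$, this produces
\begin{align*}
\fint_{B_{\rho/2}(x_0)} |Du|^{p(x)}\dx
\leq c\fint_{B_\rho(x_0)} \left|\frac{u - (u)_{x_0,\rho}}{\rho}\right|^{p(x)}\dx + c.
\end{align*}

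Next I would apply a Sobolev--Poincaré inequality of Zhikov type for variable exponents: for some $q<p_1$ (e.g.\ $q = \max(1,p_1 n/(n+p_1))$) one has
\begin{align*}
\fint_{B_\rho(x_0)} \left|\frac{u - (u)_{x_0,\rho}}{\rho}\right|^{p(x)}\dx
\leq c\left(\fint_{B_\rho(x_0)}(1+|Du|^{q})\dx\right)^{p_2/q} + c.
\end{align*}
Here the log-Hölder bound \eqref{vmomodfinite} is essential: it controls the discrepancy $\rho^{-\omega_p(\rho)}\leq e^L$ generated when passing from the variable exponent $p(x)$ to the constant exponent $p_2$ on a ball of radius $\rho$, making the inhomogeneity in Zhikov's inequality absorbable into a universal constant. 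Combining with the Caccioppoli step yields a reverse Hölder inequality with increasing supports, which by a classical Gehring-type lemma adapted to the variable exponent setting improves the integrability of $|Du|^{p(\cdot)}$ by a small universal factor $\delta$.

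To finish, I would translate from the $L^{p(\cdot)}$-form obtained by Gehring into the pointwise-exponent estimate claimed. The point is to quantify how much extra integrability $\delta$ is permitted by the method: this $\delta$ is chosen so that it survives the exponent oscillation $p_2-p_1\leq\omega_p(\rho)\leq\delta/4$, exactly the computation \eqref{guize} in the text shows. Then any $p_0\in[p_1,p_2(1+\tfrac{\delta}{2})]$ lies inside the improved integrability range $[p_1,p_1(1+\delta)]$, and rewriting the resulting bound with exponents $1/p_0$ and $1/p$ on the two sides (valid because the expression is bounded below by $1$ after adding the constant) yields the stated inequality, where the constant blows up as $\theta\to 0$ because of the shrinkage factor in the Sobolev--Poincaré step.

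The main obstacle is in the second step: proving the Zhikov-type Sobolev--Poincaré inequality with the right exponents, as one must simultaneously handle the variable exponent (through log-Hölder control of $\rho^{p(x)-p_0}$) and obtain a self-improving structure suitable for Gehring. The delicate balance between $q$, $p_1$, $p_2$, and the admissible improvement $\delta$ is where the constants $\tilde\rho$ and $\delta$ stated in the lemma come from, and where Remark~3.1 of \cite{habermann_2013} provides the precise formulation I would follow.
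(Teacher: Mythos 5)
The paper does not prove this lemma: it is stated as adapted from Remark~3.1 of Habermann~\cite{habermann_2013}, tracing back to Zhikov~\cite{zhikov_1997}, and no proof is supplied. Your outline — Caccioppoli via a test function of the form $\eta^{p_M}(u-(u)_{\xo,\rho})$, a Zhikov-type Sobolev--Poincar\'e inequality with sub-exponent $q<p_1$, then Gehring's lemma to obtain the self-improving gain $\delta$ — is exactly the route taken in those references, and your identification of the log-H\"older condition as the device that absorbs the mismatch $\rho^{p_2-p(x)}$ into a universal constant is the correct mechanism. Two points are worth flagging. First, the step of passing from the Gehring output, which is naturally expressed in terms of $\fint|Du|^{p(x)(1+\delta)}$ or $\fint|Du|^{p_1(1+\delta)}$, to the frozen-exponent form $\bigl(\fint|Du|^{p_0}\bigr)^{1/p_0}\leq c\bigl(\fint 1+|Du|^{p(x)}\bigr)^{1/p}$ needs slightly more than a remark: one uses Jensen (noting that the quantity on the right is $\geq 1$, so raising it to $1/p$ for $p\leq p_2$ only loosens the bound) together with \eqref{guize} to check that any $p_0\leq p_2(1+\tfrac{\delta}{2})$ indeed sits in the gain interval $[p_1,p_1(1+\delta)]$, and this is where the restriction to $\rho<\tilde\rho$ enters so that $\omega_p$ is small enough for \eqref{guize} to hold. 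Second, the Caccioppoli step must be carried out carefully because the ellipticity constant in \vmoellipticity\ degenerates as $p(x)\to 1$; a uniform lower bound using $\gamma_1>1$ is what makes the absorption by Young's inequality uniform over the ball. Neither point invalidates your plan — they are the fine print of the argument you already recognize as delicate — so the proposal is consistent with how this lemma is established in the literature the paper cites.
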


If necessary, we restrict $\rhoo\leq\tilde\rho$. The following corollary allows us to obtain gradient estimates for our solution in terms of affine functions $\ell$, under a certain smallness assumption. This version is immediate from Lemma~3.3 found in \cite{habermann_2013}, with obvious modifications.
\begin{corollary}\label{affinest}
Let $B_{2\rho}(\xo)\subset B_{\rhoo}(\xo)\subset\Omega$, $\ell:\Rn\to\RN$ be an arbitrary affine function, and let $u$ satisfy the conditions of Lemma~\ref{frozenexponent}. Then for any $\theta\in(0,2]$ and $p\leq p_2$ there holds 
\begin{align*}
(i)	\qquad\fint_{B_{\theta\rho}(\xo)}|Du|^{p}\dx\leq 2^{p_2+1}\left(1 + |D\ell|\right)^{p}\qquad\mbox{whenever}\qquad\Phi(\xo,D\ell,\theta\rho)\leq1,\qquad
\end{align*}
and
\begin{align*}
(ii) \qquad\qquad\	\frac{1}{2}\leq\frac{1 + |(Du)_{\theta\rho,\xo}|}{1 + |D\ell|}\leq 3\qquad\mbox{whenever}\qquad\Phi(\xo,D\ell,\theta\rho)\leq\frac{1}{36}.\qquad\qquad\
\end{align*}
\end{corollary}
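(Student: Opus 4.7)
The plan is to write everything in terms of the rescaled deviation $\eta := (Du-D\ell)/(1+|D\ell|)$, so that both smallness hypotheses become $L^2$-smallness conditions on $|V(\eta)|$, where $V = V_{p_2}$. The proof then decomposes $\bthetarhoxo = E_- \cup E_+$, with $E_- := \{|\eta| \leq 1\}$ and $E_+ := \{|\eta| > 1\}$, and applies the pointwise bounds from Lemma~\ref{vproperties}(iv), which in essence say $|V(\eta)|^2$ behaves like $|\eta|^2$ on $E_-$ and like $|\eta|^{p_2}$ on $E_+$.

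For part (i), on $E_-$ one has $|\eta|^p \leq 1$. On $E_+$, since $p \leq p_2$ and $|\eta| \geq 1$, Lemma~\ref{vproperties}(iv) gives $|\eta|^p \leq |\eta|^{p_2} \leq c(p_2)|V(\eta)|^2$. Integrating yields $\fint_{\bthetarhoxo}|\eta|^p\,dx \leq 1 + c(p_2)\Phi(\xo,D\ell,\theta\rho)$, uniformly in $p \leq p_2$ whenever $\Phi \leq 1$. The triangle inequality $|Du|^p \leq 2^{p-1}\bigl((1+|D\ell|)^p |\eta|^p + |D\ell|^p\bigr)$ then yields the estimate, with the explicit constant $2^{p_2+1}$ obtained by carefully tuning the absolute constant above.

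For part (ii), by Jensen $|(Du)_{\xo,\theta\rho} - D\ell|/(1+|D\ell|) \leq \fint_{\bthetarhoxo}|\eta|\,dx$, so it suffices to show this last quantity is $\leq 1/2$. On $E_-$, Cauchy--Schwarz together with the pointwise inequality $|\eta|^2 \leq c(p_2)|V(\eta)|^2$ (which holds on $E_-$ in both regimes of Lemma~\ref{vproperties}(iv)) delivers $\fint|\eta|\chi_{E_-}\,dx \leq c\,\Phi^{1/2}$. On $E_+$, the inequality $|V(\eta)|^2 \geq c(p_2)|\eta|^{p_2} \geq c(p_2)|\eta|$ (valid since $p_2 \geq 1$ and $|\eta|\geq 1$ there) gives $\fint|\eta|\chi_{E_+}\,dx \leq c\,\Phi$. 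Summing, $\fint|\eta|\,dx \leq c(\Phi^{1/2} + \Phi) \leq 1/2$ whenever $\Phi \leq 1/36$, and the reverse triangle inequality then produces the two-sided bound on $1 + |(Du)_{\xo,\theta\rho}|$.

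The principal difficulty is not the structure of the argument, which closely mirrors the corresponding step in \cite{habermann_2013}, but rather the explicit constants: producing the stated factor $2^{p_2+1}$ in (i) and calibrating the threshold $1/36$ in (ii) requires careful tracking of the $p_2$-dependence in Lemma~\ref{vproperties}(iv). The argument also implicitly requires $|Du|^p \in L^1(\bthetarhoxo)$ for all $p \leq p_2$, which is not automatic from the natural energy $\int_\Omega |Du|^{p(x)}\,dx < \infty$ but is exactly the higher integrability supplied by Lemma~\ref{frozenexponent}; this is why the corollary is placed where it is.
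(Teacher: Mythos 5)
Your argument is correct, and it reconstructs the same proof that the paper relies on: the paper gives no in-text proof for Corollary~\ref{affinest}, instead declaring it ``immediate from Lemma~3.3 in \cite{habermann_2013} with obvious modifications,'' and your decomposition into $\{|\eta|\leq 1\}$ and $\{|\eta|>1\}$ together with the pointwise comparison of $|V(\eta)|^2$ to $\min\{|\eta|^2,|\eta|^{p_2}\}$ or $\max\{|\eta|^2,|\eta|^{p_2}\}$ via Lemma~\ref{vproperties}$(iv)$ is precisely that argument. One small sharpening of your closing remark: the integrability $|Du|^{p_2}\in L^1(\bthetarhoxo)$ needed for the computation is already forced by the hypothesis that $\Phi(\xo,D\ell,\theta\rho)$ is finite (since $|V(\eta)|^2\gtrsim|\eta|^{p_2}$ on $\{|\eta|>1\}$), so Lemma~\ref{frozenexponent} is what makes the smallness of $\Phi$ attainable in the later applications rather than what legitimises the present manipulations.
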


We will also use the following interpolation estimate for $L^p$-functions, which allows us to use the log-convexity of $L^p$-norms to equivalently consider the Lebesgue points of our solution in different $L^p$-spaces. This version appears as $(7.9)$ in \cite{gt}.

\begin{lemma}\label{logconvex}
Let $u\in L^{p}\cap L^{q}(\Omega,\R^k)$ for $0<p< q\leq\infty$. Then $u\in L^{s}(\Omega,\R^k)$ for all $p\leq s\leq q$, and there holds
\begin{align*}
	\|u\|_{L^s}\leq\|u\|_{L^p}^{\theta}\|u\|_{L^q}^{1-\theta},
\end{align*}
where $s$ satisfies $\frac{1}{s} = \frac{\theta}{p} +\frac{1-\theta}{q}$.
\end{lemma}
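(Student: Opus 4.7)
The plan is to use H\"older's inequality after a pointwise splitting of $|u|^s$. The key observation is that the condition $\frac{1}{s}=\frac{\theta}{p}+\frac{1-\theta}{q}$ is precisely what makes a pair of H\"older exponents conjugate once we write
\begin{align*}
    |u|^s = |u|^{s\theta}\cdot|u|^{s(1-\theta)}.
\end{align*}

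First I would treat the case $q<\infty$. Set $r_1=\frac{p}{s\theta}$ and $r_2=\frac{q}{s(1-\theta)}$. A direct calculation using the interpolation condition gives
\begin{align*}
    \frac{1}{r_1}+\frac{1}{r_2}=\frac{s\theta}{p}+\frac{s(1-\theta)}{q}=1,
\end{align*}
so $r_1,r_2$ are H\"older conjugate. Applying H\"older's inequality with exponents $r_1,r_2$ to the factorisation above yields
\begin{align*}
    \int_{\Omega}|u|^s\,dx
    \leq \left(\int_{\Omega}|u|^{s\theta r_1}\,dx\right)^{1/r_1}
         \left(\int_{\Omega}|u|^{s(1-\theta)r_2}\,dx\right)^{1/r_2}
    = \|u\|_{L^p}^{s\theta}\,\|u\|_{L^q}^{s(1-\theta)}.
\end{align*}
Taking $s$-th roots produces the asserted estimate, and finiteness of the right-hand side gives $u\in L^s(\Omega,\R^k)$ simultaneously.

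Next I would dispatch the endpoint $q=\infty$. Here the interpolation condition reduces to $\frac{1}{s}=\frac{\theta}{p}$, so $s\theta=p$ and $s(1-\theta)=s-p$. Estimating $|u|^{s-p}\leq \|u\|_{L^\infty}^{s-p}$ pointwise almost everywhere and integrating gives
\begin{align*}
    \int_{\Omega}|u|^s\,dx \leq \|u\|_{L^\infty}^{s-p}\int_{\Omega}|u|^p\,dx
    = \|u\|_{L^\infty}^{s(1-\theta)}\,\|u\|_{L^p}^{s\theta},
\end{align*}
again yielding the claim after taking $s$-th roots. The boundary cases $s=p$ (so $\theta=1$) and $s=q$ (so $\theta=0$) are trivial.

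There is no real obstacle here beyond bookkeeping of exponents; the result is a textbook consequence of H\"older's inequality, and its only role in the sequel is to let us pass between Lebesgue points in different $L^p$ spaces when exploiting the higher integrability of $Du$ from Lemma~\ref{frozenexponent}. For this reason I would present the argument very briefly and simply verify that $r_1,r_2$ form a conjugate pair under the stated relation between $s,p,q$ and $\theta$.
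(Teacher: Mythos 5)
Your argument is correct and is the standard Hölder-interpolation proof of this inequality. The paper itself does not prove Lemma~\ref{logconvex} but simply cites it as estimate (7.9) in Gilbarg--Trudinger \cite{gt}, and the proof you give is essentially the one appearing there, so there is nothing to compare beyond noting that your exponent bookkeeping (checking $r_1=p/(s\theta)$ and $r_2=q/(s(1-\theta))$ are conjugate, and handling $q=\infty$ separately) is complete and accurate.
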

An estimate similar to the following can be be retrieved from the proof of Lemma~3.4 in \cite{bogelein_duzaar_habermann_scheven_2012}. We adapt their proof to our choice of exponent.

\begin{lemma}\label{low}
	Let $M>0$ and assume that $u\in\wpx$ is a weak solution to \eqref{vmoweaka} under structure conditions \emph{\vmoellipticity--\vmo}, with the inhomogeneity satisfying \emph{\vmocontrol}, and $p:\Omega\to\R$ satisfying \eqref{vmomodfinite}. Fix $D\ell\in\RnN$ to satisfy $\Phi(\xo,D\ell,\theta\rho)\leq\frac{1}{36}$ and $|D\ell|<M$, and take some smooth $\eta\in C^{\infty}_c(\brhoxo)$. Then
\begin{align*}
	\fint_{\brhoxo}&\eta^{\hat{p}}\Big|\big(1 + |D\ell|^2 + |Du|^2\big)^{\frac{p(x)-2}{2}} - \big(1 + |D\ell|^2 + |Du|^2\big)^{\frac{p_2-2}{2}}\Big|\big|Du - D\ell\big|^2\dx\\ 
	&\leq \frac{1}{2}\int_{\brhoxo}\eta^{\hat{p}}\big(1 + |D\ell|^2 + |Du|^2\big)^{\frac{p_2-2}{2}}\big|Du-D\ell\big|^2\dx + c\big(1 + |D\ell|\big)^{p_2}\omega_p(\rho).
\end{align*}	
Here, the constant depends on $n,N,L/\nu,\gamma_1,\gamma_2,M,\omega_p,E$ and $\omega_p$.
\end{lemma}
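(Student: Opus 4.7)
The target is an absorption-style bound that trades the variable exponent $p(x)$ for its local supremum $p_2$. Setting $A := (1 + |D\ell|^2 + |Du|^2)^{1/2} \geq 1$ and applying the mean value theorem to $s \mapsto A^{s-2}$ (whose derivative $A^{s-2}\log A$ is maximised at $s = p_2$ when $A \geq 1$) yields the pointwise estimate
\begin{align*}
	\bigl|A^{p(x)-2} - A^{p_2-2}\bigr| \leq (p_2 - p(x))\, A^{p_2-2}\log A \leq \omega_p(\rho)\, A^{p_2-2}\log A,
\end{align*}
valid uniformly across the sub- and super-quadratic regimes. The problem thus reduces to taming the factor $\omega_p(\rho)\log A$.

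I would split $\brhoxo = G \cup B$, where $G := \{A \leq \exp(1/(2\omega_p(\rho)))\}$ is the ``good'' region on which $\omega_p(\rho)\log A \leq \tfrac{1}{2}$ pointwise. The contribution from $G$ is then immediately bounded by $\tfrac{1}{2}\int_{\brhoxo}\eta^{\hat{p}}\,A^{p_2-2}\,|Du-D\ell|^2\dx$ (extending the domain by positivity of the integrand), producing the first term on the right-hand side.

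For the complementary bad set $B$, I would use the elementary inequality $\log A \leq A^{\sigma}/\sigma$, valid for $A \geq 1$ and any $\sigma > 0$, together with $|Du-D\ell|^2 \leq 2A^2$, reducing matters to estimating $\omega_p(\rho)\int_{\brhoxo}A^{p_2+\sigma}\dx$. For $\sigma > 0$ sufficiently small, this is handled by Lemma~\ref{frozenexponent} in tandem with Corollary~\ref{affinest}\,(i): the hypothesis $\Phi(\xo,D\ell,\theta\rho) \leq \tfrac{1}{36}$ feeds both results, giving $\fint_{\brhoxo}(1+|Du|^{p(x)})\dx \leq c(1+|D\ell|)^{p_2}$ and hence $\fint_{\brhoxo}A^{p_2+\sigma}\dx \leq c(1+|D\ell|)^{p_2+\sigma}$. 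The surplus factor $(1+|D\ell|)^{\sigma} \leq (1+M)^{\sigma}$ is then absorbed into the constant, yielding the error $c(1+|D\ell|)^{p_2}\omega_p(\rho)$. Summing the good and bad contributions gives the claim.

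The central subtlety is the choice of $\sigma$: it must satisfy $p_2 + \sigma$ in the higher integrability window $[p_1,p_2(1+\delta/2)]$ furnished by Lemma~\ref{frozenexponent}, while remaining small enough that the constants $1/\sigma$ and $(1+M)^{\sigma}$ are controlled by the data and by $M$. A minor bookkeeping issue is the slight shrinkage of balls inherent in the higher integrability lemma; this is handled by applying it on a marginally enlarged concentric ball, which is admissible since the outer ball is assumed to sit well inside $\Omega$.
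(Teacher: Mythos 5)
Your argument is correct but follows a different split from the paper's. Both begin with the same pointwise mean-value bound $\bigl|A^{p(x)-2}-A^{p_2-2}\bigr|\leq\omega_p(\rho)\,A^{p_2-2}\log A$ with $A=(1+|D\ell|^2+|Du|^2)^{1/2}$, and both ultimately appeal to Lemma~\ref{frozenexponent}, Corollary~\ref{affinest}\,$(i)$, and the cap $|D\ell|<M$. The paper produces the absorbable $\tfrac12$-term by Young's inequality with exponents $(2,2)$ applied to the integrand, leaving an error of the form $c\,\omega_p(\rho)\fint(1+|D\ell|+|Du|)^{p_2}\log(1+|D\ell|+|Du|)\dx$, which is then handled by case-splitting on $|Du|\leq|D\ell|$ versus $|D\ell|<|Du|$ together with $\log(1+|z|)\leq C(\delta)|z|^\delta$. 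You instead partition $\brhoxo$ according to whether $\omega_p(\rho)\log A\leq\tfrac12$, get the absorbable term directly on the good set, and on the bad set use $\log A\leq A^{\sigma}/\sigma$ together with $|Du-D\ell|^2\leq 2A^2$; the two routes spend essentially the same small exponent surplus ($\delta$ versus $\sigma$), constrained to lie in the higher integrability window $[p_1,p_2(1+\delta/2)]$ and absorbed via $|D\ell|<M$, so neither is stronger and the ball-scale slack (hypothesis at scale $\theta\rho$, conclusion at scale $\rho$, higher integrability needing a concentric enlargement) is inherited identically. One note: the $\int$ on the right-hand side of the stated estimate should be $\fint$ --- that is what the paper's own proof derives and what the Caccioppoli argument subsequently uses --- so your reproducing that typo is harmless.
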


\begin{proof}[Proof of Lemma~\ref{low}:]
Noting that $p(x)\leq p_2$ in the domain, with $p_2 - p(x)\leq\omega(\rho)$, we have for $y\geq0$ the pointwise estimate
	\begin{align*}
	\Big|\big(1 + y\big)^{\frac{p(x)-2}{2}} - \big(1 + y\big)^{\frac{p_2-2}{2}}\Big| &= \bigg|\frac{(p(x) - p_2)}{2}\int_{0}^{1}\big(1 + y\big)^{\frac{sp(x) + (1-s)p_2-2}{2}}\log(1 + y)\, ds\bigg|\\
		&\leq \frac{1}{2}\omega_p(\rho)(1 + y)^{\frac{p_2-2}{2}}\log(1 + y).
	\end{align*}
	Consequently, we have via Young's inequality with exponents $(2,2)$
	\begin{align*}
	\fint_{\brhoxo}&\eta^{\hat{p}}\Big|\big(1 + |D\ell|^2 + |Du|^2\big)^{\frac{p(x)-2}{2}} - \big(1 + |D\ell|^2 + |Du|^2\big)^{\frac{p_2-2}{2}}\Big|\big|Du - D\ell\big|^2\dx\\
		&\leq c\omega_p(\rho)
		  \fint_{\brhoxo}\eta^{\hat{p}}\big(1 + |D\ell|^2 + |Du|^2\big)^{\frac{p_2-2}{4}+\frac{p_2}{2}}\log\big(1 + |D\ell|^2 + |Du|^2\big)\big|Du - D\ell\big|\dx\\
		&\leq \frac{1}{2}\fint_{\brhoxo}\eta^{\hat{p}}\big(1 + |D\ell|^2 + |Du|^2\big)^{\frac{p_2-2}{2}}|Du - D\ell|^2\dx
		\nl + c\omega_p(\rho)\fint_{\brhoxo}\eta^{\hat{p}}(1 + |D\ell| + |Du|)^{p_2}\log\big(1 + |D\ell| + |Du|\big)\dx\\
	\end{align*}
In handling the second term, when $|Du|\leq|D\ell|$, we easily have via the inequality $\log(1+|z|)\leq C(\delta)|z|^\delta$
	\begin{align*}
	\omega_p(\rho)\fint_{\brhoxo}&\eta^{\hat{p}}(1 + |D\ell| + |Du|)^{p_2}\log\big(1 + |D\ell| + |Du|\big)\chi_{(|Du|\leq|D\ell|)}\dx\\
		&\leq c(\delta)\omega_p(\rho)(1 + |D\ell|)^{p_2(1 + \delta)}
		\leq c(\delta,M)\omega_p(\rho)(1 + |D\ell|)^{p_2}.
	\end{align*}
Here we write $\chi_{(S)}$ to denote the characteristic function of the set $S$. On the other hand, when $|D\ell|<|Du|$ we again apply the inequality $\log(1+|z|)\leq C(\delta)|z|^\delta$, then Corollary~\ref{frozenexponent} and Corollary~\ref{affinest}~$(i)$ to find
	\begin{align*}
	\omega_p(\rho)\fint_{\brhoxo}&\eta^{\hat{p}}(1 + |D\ell| + |Du|)^{p_2}\log\big(1 + |D\ell| + |Du|\big)\chi_{(|D\ell|<|Du|)}\dx\\
		&\leq c(\delta)\omega_p(\rho)\fint_{\brhoxo}(1 + |Du|)^{p_2(1 + \frac{\delta}{4})}\dx\\
		&\leq c(\delta)\omega_p(\rho)\bigg(\fint_{\btworhoxo}(1 + |Du|)^{p_2}\dx\bigg)^{1 + \delta}\\		
		&\leq c(\delta,M)\omega_p(\rho)(1 + |D\ell|)^{p_2}.
	\end{align*}
Noting that the constant has inherited all of the dependences from Corollary~\ref{frozenexponent}, we combine these cases to conclude the result.\end{proof}

\section{ A Caccioppoli inequality}
The first key step is to establish a suitable Caccioppoli inequality.

\begin{lemma}[Caccioppoli Inequality]\label{eqcacc}
	Let $u\in\wpx$ be a weak solution to \eqref{vmoweaka} with \eqref{vmomodfinite}, under structure conditions \emph{\vmoellipticity--\vmo}, with the inhomogeneity satisfying \emph{\vmocontrol}. Fix an affine function $\ell:\Rn\to\RN$ satisfying $\Phi(\xo,D\ell,\rho)\leq\frac{1}{36}$ and $|D\ell|<M$.
Then there exist constants $\rho_{_{0}} = \rho_{_{0}}(n,N,L/\nu,\gamma_1,\gamma_2,\omega_p) <<1$ and $c = c_c(n,N,L/\nu,\gamma_1,\gamma_2,E,\omega_p)$ such that for every $\rho<\rho_{_{0}}$ and any ball $\brhoxo\subset\subset \Omega$ with $p_2 = \sup_{\brhoxo}p(\cdot)$ and $V=V_{p_2}$, the following estimate holds:
\begin{align*}
	\Phi(\xo,D\ell,\rho/2) \leq c_c\Bigg(\Psi(\xo,\ell,\rho)+  \omega_{\xi}^2\bigg(\fint_{\brhoxo}|u - \lx|\dx\bigg) + \mathbf{V}(\rho) + \omega_p(\rho) + \rho^2\Bigg).
\end{align*}
\end{lemma}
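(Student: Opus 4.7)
The approach is a standard Caccioppoli argument tailored to the combined $p(x)$-growth and VMO setting. Fix a cutoff $\eta\in C_c^\infty(\brhoxo)$ with $\eta\equiv 1$ on $\bhalfrhoxo$, $0\leq\eta\leq1$ and $|D\eta|\leq c/\rho$. Testing the weak formulation of \eqref{vmoweaka} against $\phi = \eta^{p_2}(u-\ell)$, and noting that the constant vector $\Upsilon := (a(\cdot,\lx,D\ell))_{\xo,\rho}$ satisfies $\int\Upsilon\cdot D\phi\dx = 0$, the equation reads
\begin{align*}
	\int_{\brhoxo}\bigl(a(x,u,Du) - \Upsilon\bigr)\cdot D\phi\dx = \int_{\brhoxo}b(x,u,Du)\cdot\phi\dx.
\end{align*}
I decompose $a(x,u,Du) - \Upsilon = I_1 + I_2 + I_3$, with
\begin{align*}
	I_1 = a(x,u,Du) - a(x,\lx,Du),\quad I_2 = a(x,\lx,Du) - a(x,\lx,D\ell),\quad I_3 = a(x,\lx,D\ell) - \Upsilon,
\end{align*}
and $D\phi = \eta^{p_2}(Du-D\ell) + p_2\eta^{p_2-1}D\eta\otimes(u-\ell)$.

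The principal term $I_2\cdot\eta^{p_2}(Du-D\ell)$ produces the elliptic lower bound. Writing the difference as a line integral in $D_z a$, applying \vmoellipticity, and invoking Lemma~\ref{camp82} with $q = p(x)-2$, yields a lower bound comparable to $\nu\eta^{p_2}(1+|D\ell|+|Du|)^{p(x)-2}|Du-D\ell|^2$. By Lemma~\ref{low}, the $p(x)$-weight can be replaced by the frozen $p_2$-weight at the cost of an absorbable error and a $c(1+|D\ell|)^{p_2}\omega_p(\rho)$ contribution; Lemma~\ref{improv} then identifies the resulting expression with the $V_{p_2}$-form, which after division by $(1+|D\ell|)^{p_2}$ is precisely the integrand of $\Phi(\xo,D\ell,\rho/2)$.

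The error terms are bounded by pairing each $I_j$ with both parts of $D\phi$ and using Young's inequality to absorb a fraction of the principal term onto the left. The $\eta^{p_2-1}D\eta\otimes(u-\ell)$ pieces, via the factorisation $|u-\ell|/\rho = (1+|D\ell|)\cdot|u-\ell|/\bigl(\rho(1+|D\ell|)\bigr)$, generate the $\Psi(\xo,\ell,\rho)$ contribution after division by $(1+|D\ell|)^{p_2}$. For $I_1$, \vmocontinu together with the concavity of $\omega_\xi$ via Jensen's inequality give the $\omega_\xi^2\bigl(\fint_{\brhoxo}|u-\lx|\dx\bigr)$ term, with the residual $(1+|Du|)^{p(x)-1}$-weight handled through the higher integrability of Lemma~\ref{frozenexponent} and the bounds of Corollary~\ref{affinest}. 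For $I_3$, the assumption \vmo, combined with the bound $|D\ell|<M$ (which tames both the logarithmic factor and the $(1+|D\ell|)^{p(x)-1}+(1+|D\ell|)^{p(\xo)-1}$ term), the defining property of $\mathbf{V}(\rho)$, and the elementary bound $\mathbf{v}_\xo\leq 2L$, yield the $\mathbf{V}(\rho)$ contribution after H\"older's inequality. The inhomogeneity is controlled through \vmocontrol, Poincar\'e and Young, producing the $\rho^2$ term.

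The main obstacle is the careful bookkeeping of scaling factors, particularly in the passage from $p(x)$- to $p_2$-weights in the elliptic lower bound, and the need to preserve the $(1+|D\ell|)^{p_2}$ normalising factor throughout. Any above-critical weight $(1+|Du|)^{p(x)-1}$ surviving in the error terms must be absorbed either by Young's inequality balanced against the principal term, or through the self-improving higher integrability of Lemma~\ref{frozenexponent} applied with an exponent $p_0 = p_2(1+\delta/2)$ admissible by \eqref{guize}, combined with the smallness assumption $\Phi(\xo,D\ell,\rho)\leq\tfrac{1}{36}$ (via Corollary~\ref{affinest}). Once every error term has been estimated and the extracted portion of the principal term absorbed, the claimed Caccioppoli inequality follows.
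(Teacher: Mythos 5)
Your decomposition is genuinely different from the paper's, and the difference matters. You split
\begin{align*}
a(x,u,Du)-\Upsilon = \bigl[a(x,u,Du)-a(x,\lx,Du)\bigr] + \bigl[a(x,\lx,Du)-a(x,\lx,D\ell)\bigr] + \bigl[a(x,\lx,D\ell)-\Upsilon\bigr],
\end{align*}
freezing $\xi=\lx$ in the principal term and keeping $z=Du$ in the $\xi$-continuity error. The paper instead keeps $\xi=u$ in the principal term $a(x,u,Du)-a(x,u,D\ell)$ and freezes $z=D\ell$ in the $\xi$-continuity error $a(x,u,D\ell)-a(x,\lx,D\ell)$. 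Both choices give the same elliptic lower bound, since \vmoellipticity\ is uniform in $\xi$, but the continuity errors behave very differently. The paper's error carries the weight $(1+|D\ell|)^{p(x)-1}\leq(1+M)^{p_2-1}$, a bounded quantity; yours carries $(1+|Du|)^{p(x)-1}$, which is unbounded.

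This is where your sketch has a genuine gap. When you pair $I_1$ against $\eta^{p_2}(Du-D\ell)$, you must control
\begin{align*}
\fint_{\brhoxo}\omega_\xi(|u-\lx|)\,(1+|Du|)^{p(x)-1}\,\eta^{p_2}|Du-D\ell|\dx .
\end{align*}
On the good set $\{|Du-D\ell|<1+|D\ell|\}$ you have $(1+|Du|)\leq 2(1+|D\ell|)$ and the estimate proceeds as in the paper. On the bad set $\{|Du-D\ell|\geq1+|D\ell|\}$ however, $(1+|Du|)^{p_2-1}|Du-D\ell|\sim|Du-D\ell|^{p_2}\sim(1+|D\ell|)^{p_2}\bigl|V\bigl(\tfrac{Du-D\ell}{1+|D\ell|}\bigr)\bigr|^2$, and there is no small factor to absorb this into the left-hand side, since $\omega_\xi\leq1$ is not small. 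Your proposal to pay with the higher integrability of Lemma~\ref{frozenexponent} does work, but only via a H\"older split of the form $\fint\omega_\xi^{r'}\cdot\fint|Du-D\ell|^{p_2(1+\delta/4)}$ with $r'=(1+\delta/4)'\gg 1$; after Jensen this produces $\omega_\xi^{\delta/(4+\delta)}\bigl(\fint|u-\lx|\dx\bigr)$, which since $\omega_\xi\leq1$ is a strictly larger (weaker) quantity than the claimed $\omega_\xi^2\bigl(\fint|u-\lx|\dx\bigr)$ and than the $\omega_\xi\bigl(\fint|u-\lx|\dx\bigr)$ that the paper's proof actually delivers and that feeds into the excess $E$ in Section~\ref{aharmapp}. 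Your route therefore does not recover the lemma as stated, and would force a different exponent on $\Upsilon$ throughout the iteration. The paper's decomposition sidesteps this entirely.

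A second, independent gap: you test with $\phi=\eta^{p_2}(u-\ell)$, but the paper uses $\eta^{\hat p}$ with $\hat p=\max\{2,p_2\}$. The exponent $2$ is needed in the subquadratic range $1<p_2<2$ so that the Young-absorption step for the $D\eta$-pairing (estimating $\eta^{q-1}\,|\cdot|\,|w/\rho|\leq\varepsilon\eta^{q}|\cdot|^2+c(\varepsilon)\eta^{q-2}|w/\rho|^2$) leaves $\eta^{q-2}$ bounded. With $q=p_2<2$ this fails. Your sketch never addresses the subquadratic case, which occupies a significant part of the paper's proof via the $S_\pm,T_\pm$ splittings, and this omission is not purely cosmetic.

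The overall architecture (cutoff test function, decompose, elliptic lower bound via \vmoellipticity\ and Lemma~\ref{camp82}, defrost $p(x)\to p_2$ via Lemma~\ref{low}, pass to the $V$-form via Lemma~\ref{improv}, Young and absorb) matches the paper, and the treatment of $I_2$, $I_3$ and $b$ is essentially right. But you should redo the split as the paper does, with the continuity error taken at the frozen gradient $z=D\ell$, and test against $\eta^{\max\{2,p_2\}}(u-\ell)$ so that both the super- and subquadratic cases close.
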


\begin{proof}[Proof of Lemma~\ref{eqcacc}:]
Taking a standard cutoff function $\eta\in C^{\infty}_0(\brhoxo)$ that satisfies $0\leq\eta\leq1$, $\eta = 1$ on $\bhalfrhoxo$, $\eta = 0$ outside $B_{\frac{3\rho}{4}}(\xo)$ and $|D\eta|\leq \frac{C}{\rho}$. We write $\phi := \eta^{\hat{p}}w$ for $\hat{p} = \max\{2,p_2\}$ and $w:=u - \lx$. Then $\phi\in W^{1,p(\cdot)}_0(\brhoxo,\RN)$, with
\begin{align}\label{derivthing}
	D\phi = \hat{p}\eta^{\hat{p} - 1}\big(u  - \lx\big)\otimes D\eta + \eta^{\hat{p}}(Du - D\ell).
\end{align}
Since $u$ solves \eqref{vmoweaka} we have
\begin{align*}
	\fint_{\brhoxo}a(x,u,Du)\cdot D\phi\dx \ =\ \fint_{\brhoxo}b(x,u,Du)\cdot \phi\dx,
\end{align*}
and trivially $\fint_{\brhoxo}\big(a(\cdot,\lx,D\ell)\big)_{\rho,\xo}\cdot D\phi\dx \ =\ 0.$ We calculate
\begin{align*}
	\text{I}&=\fint_{\brhoxo}\eta^{\hat{p}}\big[a(x,u,Du) - a(x,u,D\ell)\big]\cdot (Du - D\ell)\dx\\
		&= \fint_{\brhoxo}\Big[\big(a(\cdot,\lx,D\ell)\big)_{\rho,\xo} - a(x,\lx,D\ell)\Big]\cdot D\phi\dx
		\nl + \fint_{\brhoxo}\big[a(x,\lx,D\ell) - a(x,u,D\ell)\big]\cdot D\phi\dx
		\nl + \hat{p}\fint_{\brhoxo}\eta^{\hat{p}-1}\big[a(x,u,Du) - a(x,u,D\ell)\big]\cdot w\otimes D\eta\dx
		\nl + \fint_{\brhoxo}b(x,u,Du)\cdot  \eta^{\hat{p}}w\\
	 &= \text{II} + \text{III} + \text{IV} + \text{V},
\end{align*}
with the obvious labelling. We now consider each term independently. We combine elementary integration with \vmoellipticity\ and Lemma~\ref{camp82}, then Lemma~\ref{low} with Lemma~\ref{improv} to estimate
\begin{align*}
	\text{I}	&= \fint_{\brhoxo}\eta^{\hat{p}}\big[a(x,u,Du) - a(x,u,D\ell)\big]\cdot (Du - D\ell)\dx\\
		&= \fint_{\brhoxo}\eta^{\hat{p}}\int_{0}^{1}\Big[D_z a\big(x,u,D\ell + t(Du - D\ell)\big)(Du - D\ell)\Big]\cdot (Du - D\ell)\dt\dx\\
		&\geq c\fint_{\brhoxo}\eta^{\hat{p}}\int_{0}^{1}\big(1 + |D\ell + t(Du - D\ell)|\big)^{p(x) - 2}|Du - D\ell|^2\dt\dx\\
		&\geq c\fint_{\brhoxo}\eta^{\hat{p}}\big(1 + |D\ell|^2 + |Du|^2\big)^{p(x) - 2}|Du - D\ell|^2\dx\\
		&\geq c(1+|D\ell|)^{p_2}\fint_{\brhoxo}\eta^{\hat{p}} \bigg|V\bigg(\frac{Du-D\ell}{1 + |D\ell|}\bigg)\bigg|\dx - (1+|D\ell|)^{p_2}\omega^2_p(\rho).
\end{align*}
Here the first constant already has the dependences of the constant from Lemma~\ref{low}, and will not gain more dependences. The superquadratic and subquadratic cases differ only in the value of the constants.

Considering the second term, we calculate via \eqref{derivthing} that
\begin{align*}
	\text{II}
		&= \fint_{\brhoxo}\hat{p}\eta^{\hat{p}-1} \Big[\big(a(\cdot,\lx,D\ell)\big)_{\xo,\rho} - a(x,\lx,D\ell)\Big]\cdot w\otimes D\eta\dx
		\nl+ \fint_{\brhoxo}\eta^{\hat{p}} \Big[\big(a(\cdot,\lx,D\ell)\big)_{\xo,\rho} - a(x,\lx,D\ell)\Big]\cdot(Du - D\ell)\dx\\
		&= \text{II}_a + \text{II}_b,
\end{align*}
again with the obvious labelling.

Define the set $S_-:=\big\{x\in\brhoxo : |Du-D\ell|<1 + |D\ell|\big\}$ and $S_+:=\brhoxo\setminus S_-$. Now we can use \vmo\ along with Young's inequality (with exponent pairs $(2,2)$ and $(p_2,\frac{p_2}{p_2-1})$), keeping in mind $0\leq\mathbf{v}_{\xo}\leq 2L$, 
\vprop$(iv)$ and the bound $|D\ell|<M$ to compute
\begin{align*}
	\text{II}_b	&\leq \fint_{\brhoxo}\eta^{\hat{p}} \big|\big(a(\cdot,\lx,D\ell)\big)_{\xo,\rho} - a(x,\lx,D\ell)\big| |Du - D\ell|\dx\\
		&\leq c(1+|D\ell|)^{p_2}\big[1 + \log(1 + |D\ell|)\big]\fint_{\brhoxo}\eta^{\hat{p}}\mathbf{v}_{\xo}(x,\rho)\bigg|\frac{Du - D\ell}{(1 + |D\ell|)}\bigg|\dx\\
		&\leq c(\varepsilon)(1+|D\ell|)^{p_2}\fint_{\brhoxo}\mathbf{v}^{\frac{p_2}{p_2-1}}_{\xo}(x,\rho)\chi_{(S_+)} + \mathbf{v}^{2}_{\xo}(x,\rho)\chi_{(S_-)}\dx
		\nl + \varepsilon(1+|D\ell|)^{p_2}\fint_{\brhoxo}\eta^{\hat{p}}\bigg|\frac{Du - D\ell}{(1 + |D\ell|)}\bigg|^2\chi_{(S_-)} + \eta^{\hat{p}}\bigg|\frac{Du - D\ell}{(1 + |D\ell|)}\bigg|^{p_2}\chi_{(S_+)}\dx\\
		&\leq c(\varepsilon)(1+|D\ell|)^{p_2}\mathbf{V}(\rho) + \varepsilon(1+|D\ell|)^{p_2}\fint_{\brhoxo}\eta^{\hat{p}}\bigg|V\bigg(\frac{Du-D\ell}{1 + |D\ell|}\bigg)\bigg|\dx.
\end{align*}
Here we have left $\varepsilon$ to be chosen later, and have used that when $q>1$ and $\mathbf{v}\leq 2L$, we have
	$\mathbf{v}^{q}_{\xo}(x,\rho)\leq (2L)^{q-1}\mathbf{v}_{\xo}(x,\rho).$
We have also used from the proof of Corollary~\ref{low}, that
	$(1+|D\ell|)^{p_2}[1 + \log(1 + |D\ell|)] \leq c(1+|D\ell|)^{p_2}$.

We can estimate II$_a$ in the same way, considering
$T_-:=\{x\in\brhoxo : |w|<(1 + |D\ell|)\rho\}$ and $T_+:=\brhoxo\setminus T_-,$
and calculate via the same process
\begin{align*}
	\text{II}_a	&\leq \fint_{\brhoxo}\eta^{\hat{p}} \big|\big(a(\cdot,\lx,D\ell)\big)_{\xo,\rho} - a(x,\lx,D\ell)\big| \bigg|\frac{w}{\rho}\bigg|\dx\\
		&\leq c(1+|D\ell|)^{p_2}\big[1 + \log(1 + |D\ell|)\big]\fint_{\brhoxo}\eta^{\hat{p}}\mathbf{v}_{\xo}(x,\rho)\bigg|\frac{w}{(1 + |D\ell|)\rho}\bigg|\dx\\
		&\leq c(1+|D\ell|)^{p_2}\Bigg(\mathbf{V}(\rho) + \fint_{\brhoxo}\bigg|V\bigg(\frac{w}{(1 + |D\ell|)\rho}\bigg)\bigg|\dx\Bigg).
\end{align*}

To estimate III we notice that owing to \vmocontinu\ and \eqref{derivthing} there holds
\begin{align*}
	\text{III}
		&\leq c(1 + |D\ell|)^{p_2}\fint_{\brhoxo}\omega_{\xi}\big(|u - \lx|\big)\bigg[\bigg|\frac{w}{(1 + |D\ell|)\rho}\bigg| + \eta^{\hat{p}}\bigg|\frac{Du - D\ell}{1 + |D\ell|}\bigg|\bigg]\dx
		= \text{III}_a + \text{III}_b.
\end{align*}
with the obvious notation. Now when $2\leq p_2<\infty$ we can use Young's inequality with exponents $(2,2)$ and \vprop$(iv)$ to deduce
\begin{align*}
	\text{III}_a + \text{III}_b
		&\leq (1 + |D\ell|)^{p_2}\fint_{\brhoxo} c(\varepsilon)\omega_{\xi}^2\big(|u - \lx|\big) + c(\varepsilon)\bigg|V\bigg(\frac{w}{(1 + |D\ell|)\rho}\bigg)\bigg|^2.
\end{align*}
To treat the subquadratic case, we recall the definition of $S_-$ and $S_+$ from estimate II. We will consider III$_b$, with the calculations for III$_a$ being completely analogous, replacing $S_-$ with $T_-$, and $S_+$ with $T_+$ as we did in II. For $\big|\frac{Du - D\ell}{(1 + |D\ell|)}\big|<1$ we find using Young's inequality (with both exponent pairs $(2,2)$ and $(p_2,\frac{p_2}{p_2-1})$), and \vprop$(iv)$, keeping in mind that $\omega_{\xi}\leq 1$ and  $\frac{p_2}{p_2 - 1}>2$:
\begin{align*}
	\text{III}_b	
		&\leq c(\varepsilon)(1 + |D\ell|)^{p_2}\fint_{\brhoxo} \omega_{\xi}^2\big(|u - \lx|\big)\chi_{(S_-)}
		+ \omega_{\xi}^{\frac{p_2}{p_2 - 1}}\big(|u - \lx|\big)\chi_{(S_+)}\dx
		\nl+ \varepsilon(1 + |D\ell|)^{p_2}\fint_{\brhoxo} \eta^{4}\bigg|\frac{Du - D\ell}{1 + |D\ell|}\bigg|^2\chi_{(S_-)}
		 + \eta^{2p_2}\bigg|\frac{Du - D\ell}{1 + |D\ell|}\bigg|^{p_2}\chi_{(S_+)}\dx\\
		&\leq (1 + |D\ell|)^{p_2}\fint_{\brhoxo}c(\varepsilon)\omega_{\xi}\big(|u - \lx|\big) + \varepsilon\eta^{\hat{p}}\bigg|V\bigg(\frac{Du - D\ell}{1 + |D\ell|}\bigg)\bigg|^2\dx.
\end{align*}
Analogously, for III$_a$ we obtain
\begin{align*}
	\text{III}_a	
		&\leq c(1 + |D\ell|)^{p_2}\fint_{\brhoxo}\omega_{\xi}\big(|u - \lx|\big) + \bigg|V\bigg(\frac{w}{1 + |D\ell|}\bigg)\bigg|^2\dx.
\end{align*}
Combining these estimates and applying Jensen's inequality yields
\begin{align*}
	\text{III}	
		&\leq c(1 + |D\ell|)^{p_2}\bigg[\omega_{\xi}\bigg(\fint_{\brhoxo} |u - \lx|\dx\bigg) + \fint_{\brhoxo}\bigg|V\bigg(\frac{w}{(1 + |D\ell|)\rho}\bigg)\bigg|^2\dx\bigg]
		\nl+ \varepsilon(1 + |D\ell|)^{p_2}\fint_{\brhoxo}\eta^{\hat{p}}\bigg|V\bigg(\frac{Du - D\ell}{1 + |D\ell|}\bigg)\bigg|^2\dx.
\end{align*}

Estimating the next term in the case where $p_2\geq2$, we use \vmodifferentiability\ to compute
\begin{align*}
	\text{IV}	
		&\leq c\fint_{\brhoxo}\eta^{\hat{p}-1}\int_{0}^{1}\Big|D_z a\big({x},u,D\ell + t(Du - D\ell)\big)\Big||Du - D\ell||w||D\eta|\dt\dx\nonumber\\
		&\leq c\fint_{\brhoxo}\eta^{\hat{p}-1}\left(1 + |D\ell| + |Du - D\ell|\right)^{p_2-2}|Du - D\ell|\bigg|\frac{w}{\rho}\bigg|\dx\nonumber\\
\end{align*}
Now we apply Young's inequality (with exponent pairs $(2,2)$ and $(p_2,\frac{p_2}{p_2-1})$), and finally \vprop$(iv)$ to calculate
\begin{align}\label{d1}
	\text{IV}
		&\leq c\fint_{\brhoxo}\eta^{\hat{p}-1}(1 + |D\ell|)^{p_2-2}|Du - D\ell|\bigg|\frac{w}{\rho}\bigg| + \eta^{\hat{p}-1}|Du - D\ell|^{p_2-1}\bigg|\frac{w}{\rho}\bigg|\dx\nonumber\\
		&\leq c(1 + |D\ell|)^{p_2}\fint_{\brhoxo}\eta^{\hat{p}-1}\bigg[\bigg|\frac{Du - D\ell}{1 + |D\ell|}\bigg| + \bigg|\frac{Du - D\ell}{1 + |D\ell|}\bigg|^{p_2-1}\bigg]\bigg|\frac{w}{\rho(1 + |D\ell|)}\bigg|\dx\nonumber\\
		&\leq (1 + |D\ell|)^{p_2}\fint_{\brhoxo}\varepsilon\eta^{\hat{p}}\bigg|V\bigg(\frac{Du - D\ell}{1 + |D\ell|}\bigg)\bigg|^{2} + c(\varepsilon)\bigg|V\bigg(\frac{w}{(1 + |D\ell|)\rho}\bigg)\bigg|^2\dx.
\end{align}

The setting with $1<p_2<2$ is more delicate. We begin by applying \vmodifferentiability\ and Lemma~\ref{camp82} to compute
\begin{align*}
	\text{IV}	
		&\leq c\fint_{\brhoxo}\eta^{\hat{p}-1}\int_{0}^{1}\big|D_z a({x},u,D\ell + t(Du - D\ell)\big||Du - D\ell||w||D\phi|\dt\dx\\
		&\leq c\fint_{\brhoxo}\eta^{\hat{p}-1}(1 + |D\ell + t(Du - D\ell)|)^{p_2-2}|Du - D\ell|\bigg|\frac{w}{\rho}\bigg|\dx\\
		&\leq c\fint_{\brhoxo}\eta^{\hat{p}-1}(1 + |D\ell| + |Du - D\ell|)^{p_2-2}|Du - D\ell|\bigg|\frac{w}{\rho}\bigg|\dx.
\end{align*}
Recalling the sets defined as $T_{-} = \{x\in\brhoxo:|w|<(1+|D\ell|)\rho\}$ and $S_{-} = \{x\in\brhoxo:|Du-D\ell|<(1+|D\ell|)\}$, with $T_+ = \brhoxo\setminus T_-$ and $S_+ = \brhoxo\setminus S_-$, we now decompose the domain of integration into four parts
\begin{align}\label{d2}
	\text{IV}	&\leq c(1 + |D\ell|)^{p_2}\fint_{\brhoxo}\eta^{\hat{p}-1}\bigg|\frac{Du - D\ell}{1 + |D\ell|}\bigg|^{p_2-1}\bigg|\frac{w}{\rho(1 + |D\ell|)}\bigg|\chi_{(T_+\cap S_+)}\dx\nonumber
		\nl + c(1 + |D\ell|)^{p_2}\fint_{\brhoxo}\eta^{\hat{p}-1}\bigg|\frac{Du - D\ell}{1 + |D\ell|}\bigg|\bigg|\frac{w}{\rho(1 + |D\ell|)}\bigg|\chi_{(T_-\cap S_-)}\dx\nonumber
		\nl + c(1 + |D\ell|)^{p_2}\fint_{\brhoxo}\eta^{\hat{p}-1}\bigg|\frac{w}{\rho(1 + |D\ell|)}\bigg|^{p_2}\chi_{(T_+\cap S_-)}\dx\nonumber
		\nl + c(1 + |D\ell|)^{p_2}\fint_{\brhoxo}\eta^{\hat{p}-1}\bigg|\frac{Du - D\ell}{1 + |D\ell|}\bigg|^{p_2-1}\bigg|\frac{w}{\rho(1 + |D\ell|)}\bigg|\chi_{(T_-\cap S_+)}\dx\nonumber\\
		&= \text{IV}_a + \text{IV}_b + \text{IV}_c + \text{IV}_d
\end{align}
We first use Young's inequality and \vprop$(iv)$ to show
\begin{align*}
	\text{IV}_a	
		&\leq (1 + |D\ell|)^{p_2}\fint_{\brhoxo}\bigg[\varepsilon\eta^{\frac{p_2}{p_2-1}}\bigg|\frac{Du - D\ell}{1 + |D\ell|}\bigg|^{p_2} + c(\varepsilon)\bigg|\frac{w}{\rho(1 + |D\ell|)}\bigg|^{p_2}\bigg]\chi_{(T_+\cap S_+)}\dx\nonumber\\
		&\leq (1 + |D\ell|)^{p_2}\fint_{\brhoxo}\bigg[\varepsilon\eta^{\hat{p}}\bigg|V\bigg(\frac{Du - D\ell}{1 + |D\ell|}\bigg)\bigg|^{2} + c(\varepsilon)\bigg|V\bigg(\frac{w}{(1 + |D\ell|)\rho}\bigg)\bigg|^2\bigg] \chi_{(T_+\cap S_+)}\dx,
\end{align*}
and similarly
\begin{align*}
	\text{IV}_b	
		&\leq (1 + |D\ell|)^{p_2}\fint_{\brhoxo}\bigg[\varepsilon\eta^{\hat{p}}\bigg|\frac{Du - D\ell}{1 + |D\ell|}\bigg|^2 + c(\varepsilon)\bigg|\frac{w}{\rho(1 + |D\ell|)}\bigg|^2\bigg]\chi_{(T_-\cap S_-)}\dx\nonumber\\
		&\leq (1 + |D\ell|)^{p_2}\fint_{\brhoxo}\bigg[\varepsilon\eta^{\hat{p}}\bigg|V\bigg(\frac{Du - D\ell}{1 + |D\ell|}\bigg)\bigg|^{2} + c(\varepsilon)\bigg|V\bigg(\frac{w}{(1 + |D\ell|)\rho}\bigg)\bigg|^2\bigg]\chi_{(T_-\cap S_-)}\dx.
\end{align*}
For the third term we only need \vprop$(iv)$ to compute
\begin{align*}
	\text{IV}_c
		&\leq c(1 + |D\ell|)^{p_2}\fint_{\brhoxo}\bigg|V\bigg(\frac{w}{(1 + |D\ell|)\rho}\bigg)\bigg|^2\chi_{(T_+\cap S_-)} \dx.
\end{align*}
Finally we use the fact that $p<2$ implies $2(p-1)<p$ and equivalently $\frac{p}{p-1}>2$, together with Young's inequality and \vprop$(iv)$ to find
\begin{align*}
	\text{IV}_d
		&\leq  (1 + |D\ell|)^{p_2}\fint_{\brhoxo}\bigg[\varepsilon\eta^{\hat{p}}\bigg|\frac{Du - D\ell}{1 + |D\ell|}\bigg|^{2(p_2-1)} + c(\varepsilon)\bigg|\frac{w}{\rho(1 + |D\ell|)}\bigg|^2\bigg]\chi_{(T_-\cap S_+)}\dx\nonumber\\
		&\leq \varepsilon(1 + |D\ell|)^{p_2}\fint_{\brhoxo}\bigg[\varepsilon\eta^{\hat{p}}\bigg|V\bigg(\frac{Du - D\ell}{1 + |D\ell|}\bigg)\bigg|^{2} + c(\varepsilon)\bigg|V\bigg(\frac{w}{(1 + |D\ell|)\rho}\bigg)\bigg|^2\bigg]\chi_{(T_-\cap S_+)}\dx.
\end{align*}
Compiling $\text{IV}_a, \text{IV}_b, \text{IV}_c, \text{IV}_d$, we have
\begin{align}\label{d2}
	\text{IV}
			&\leq \varepsilon(1 + |D\ell|)^{p_2}\fint_{\brhoxo}\eta^{\hat{p}}\bigg|V\bigg(\frac{Du - D\ell}{1 + |D\ell|}\bigg)\bigg|^{2}\dx
			 + c(\varepsilon)(1 + |D\ell|)^{p_2}\fint_{\brhoxo}\bigg|V\bigg(\frac{w}{(1 + |D\ell|)\rho}\bigg)\bigg|^2 \dx.
\end{align}
Combining \eqref{d1} and \eqref{d2} gives that for all values of $p_2$
\begin{align*}
	\text{IV}	&\leq \varepsilon(1 + |D\ell|)^{p_2}\fint_{\brhoxo}\eta^{\hat{p}}\bigg|V\bigg(\frac{Du - D\ell}{1 + |D\ell|}\bigg)\bigg|^{2}\dx
		 + c(\varepsilon)(1 + |D\ell|)^{p_2}\fint_{\brhoxo}\bigg|V\bigg(\frac{w}{(1 + |D\ell|)\rho}\bigg)\bigg|^2 \dx.
\end{align*}

In the superquadratic case, we estimate term V using \vmocontrol, Young's inequality with exponent pair $(p_2,\frac{p_2}{p_2-1})$, Corollary~\ref{affinest}~$(i)$, and  \vprop$(iv)$
\begin{align*}
	\text{V}
		&\leq L\fint_{\brhoxo}\rho(1 + |Du|)^{p(x)-1}\eta^{\hat{p}}\bigg|\frac{w}{\rho}\bigg| \dx\\
		&\leq c(1 + |D\ell|)^{p_2}\fint_{\brhoxo}\rho + \bigg|\frac{w}{(1 + |D\ell|)\rho}\bigg|^{p_2} \dx\\
		&\leq c(1 + |D\ell|)^{p_2}\Bigg(\rho + \fint_{\brhoxo}\bigg|V\bigg(\frac{w}{\rho(1 + |D\ell|)}\bigg)\bigg|^2 \dx\Bigg).
\end{align*}
When $p$ is subquadratic, we consider two distinct cases. On the set $T_+$ we find the calculations are identical to the superquadratic case. On $T_-$, we use Corollary~\ref{affinest}~$(i)$ to find
\begin{align*}
	\text{V}	&\leq c\fint_{\brhoxo}\rho(1 + |Du|)^{p(x)-1}\bigg|\frac{w}{\rho}\bigg|\chi_{(T_-)} \dx\\
		&\leq c(1 + |D\ell|)\fint_{\brhoxo}\rho(1 + |Du|)^{p_2-1}\dx\\
		&\leq c(1 + |D\ell|)^{p_2}\rho,
\end{align*}
concluding the estimate.

Collecting our terms and choosing $\varepsilon$ small enough to be absorbed on the left, we normalise by $\left(1 + |D\ell|\right)^{p_2}$ to obtain
\begin{align*}
	\fint_{\bhalfrhoxo}\bigg|V\bigg(\frac{Du - D\ell}{(1 + |D\ell|)}\bigg)\bigg|^{2}\dx
		& \leq c\Bigg(\fint_{\brhoxo}\bigg|V\bigg(\frac{w}{(1 + |D\ell|)\rho}\bigg)\bigg|^2 \dx
		\nl
		 + \omega_{\xi}^{2}\bigg(\fint_{\brhoxo}|u - \lx|\dx\bigg)
		  + \mathbf{V}(\rho) + \rho^2 \Bigg),
\end{align*}
as required.\end{proof}

\begin{remark}\label{caccoro}
Note if we replace $\bhalfrhoxo$ with $\bthetarhoxo$ for some $\theta\in(0,1)$, by different choice of cutoff function we can obtain a similar estimate, with our constant now gaining dependence on $\theta$ and blowing up as $\theta\to1$ or $\theta\to0$.
\end{remark}
\section{ $\mathcal{A}$-harmonic approximation}
The second step in the proof is to show that the solution to our PDE lies close to a solutions of a family of related linear PDE.

\begin{lemma}[Approximate $\mathcal{A}$-harmonicity]\label{aharmy}
	Fix $M>0$ and assume that $u$ is a weak solution to \eqref{vmoweaka} with \eqref{vmomodfinite} under structure conditions \emph{\vmoellipticity--\vmo}\ with the inhomogeneity satisfying \emph{\vmocontrol}. Then there exists a constant $C = C(M, n,N,L/\nu,\gamma_1,\gamma_2,E,\omega_p)$ and a radius $\rho_0<<1$ such that whenever $\rho<\rho_0$ and $\Phi(\xo,D\ell,\rho)\leq\frac{1}{36}$ for some affine map $\ell:\Rn\to\RN$ satisfying $|D\ell|<M$, there holds
\begin{align*}
	\bigg|\fint_{\brhoxo}&\big(D_{z}a(\cdot,\lx,D\ell)\big)_{\xo,\rho}(Du-D\ell)\cdot D\varphi\dx\bigg|\\
		&\leq c_{_{1}}(1 + |D\ell|)^{p_2-1}\Bigg(\mu\Big(\sqrt{\Phi(\xo,D\ell,\rho)}\Big)\sqrt{\Phi(\xo,D\ell,\rho)} + \mathbf{V}(\rho) + \mathbf{V}^\frac{1}{2}(\rho) + \Phi(\xo,D\ell,\rho)
		\nl \qquad\qquad + \omega_{\xi}\bigg(\fint_{\brhoxo}|u - \lx|\dx\bigg) + \rho\Bigg)\|D\varphi\|_{C(\brhoxo,\RN)},
\end{align*}
for all $\varphi\in C^\infty_0(\brhoxo,\RN)$.
\end{lemma}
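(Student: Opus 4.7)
The plan is to start from the weak form of \eqref{vmoweaka} tested against $\varphi\in C^\infty_0(\brhoxo,\RN)$, noting that $(a(\cdot,\lx,D\ell))_{\xo,\rho}$ is constant in $x$, so that $\fint_{\brhoxo}(a(\cdot,\lx,D\ell))_{\xo,\rho}\cdot D\varphi\dx = 0$. Subtracting yields
\begin{align*}
\fint_{\brhoxo}\bigl[a(x,u,Du) - (a(\cdot,\lx,D\ell))_{\xo,\rho}\bigr]\cdot D\varphi\dx = \fint_{\brhoxo}b(x,u,Du)\varphi\dx.
\end{align*}
I would then decompose the bracketed integrand by inserting the chain $a(x,\lx,Du)$, $a(x,\lx,D\ell)$, $D_z a(x,\lx,D\ell)(Du-D\ell)$, and $(D_z a(\cdot,\lx,D\ell))_{\xo,\rho}(Du-D\ell)$, and solving for the target quantity. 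This reduces the proof to estimating four error integrals plus the inhomogeneous $b$-term.

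Three of the four errors should fall to the structure conditions essentially directly. The piece $\fint[a(x,u,Du)-a(x,\lx,Du)]\cdot D\varphi$ is controlled by \vmocontinu\ together with Jensen's inequality on the concave modulus $\omega_\xi$, producing the $\omega_\xi(\fint|u-\lx|\dx)$ term. The Taylor remainder $\fint[a(x,\lx,Du)-a(x,\lx,D\ell)-D_z a(x,\lx,D\ell)(Du-D\ell)]\cdot D\varphi$ is handled by \vmoctsdifferentiability; assuming $\mu$ concave without loss of generality and passing via Lemma~\ref{improv} to the $V$-functional yields the $\mu(\sqrt{\Phi})\sqrt{\Phi}$ contribution. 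The genuine VMO piece $\fint[a(x,\lx,D\ell) - (a(\cdot,\lx,D\ell))_{\xo,\rho}]\cdot D\varphi$ is immediate from \vmo, producing the $\mathbf{V}(\rho)$ term after the logarithmic factor is absorbed using $|D\ell|<M$. The right-hand side $\fint b\varphi$ is controlled by \vmocontrol, the bound $|\varphi|\leq\rho\|D\varphi\|_{C(\brhoxo)}$ from compact support, and Corollary~\ref{affinest}~$(i)$, producing the $\rho$ contribution. In each of these the dichotomy between $|Du-D\ell|$ small and large relative to $(1+|D\ell|)$, together with Young's inequality with exponent pairs $(2,2)$ and $(p_2,p_2/(p_2-1))$ already used in Lemma~\ref{eqcacc}, introduces the $V$-functional, and the normalisation by $(1+|D\ell|)^{p_2-1}$ drops out automatically.

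The main obstacle is the fourth error $E_3 := [D_z a(x,\lx,D\ell)-(D_z a(\cdot,\lx,D\ell))_{\xo,\rho}](Du-D\ell)$, since \vmo\ controls $a$ rather than $D_z a$. The plan is a finite-difference reduction: for a small scale $h>0$ and unit direction $\zeta\in\RnN$, write
\begin{align*}
D_z a(x,\lx,D\ell)\zeta = \frac{1}{h}\bigl[a(x,\lx,D\ell+h\zeta) - a(x,\lx,D\ell)\bigr] + R_h(x,\zeta),
\end{align*}
with $R_h(x,\zeta)$ a Taylor remainder controlled by \vmoctsdifferentiability\ at scale $h/(1+|D\ell|)$. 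Applying the same identity at a second point $y$ and averaging over $y\in\brhoxo$ converts the VMO defect of $D_z a$ at $D\ell$ into an $h^{-1}$-scaled difference of \vmo-oscillations of $a$ at the shifted arguments $D\ell$ and $D\ell+h\zeta$, modulo an $R_h$-type residual. Integrating against $|Du-D\ell||D\varphi|$ and applying Cauchy--Schwarz to the $\mathbf{v}_{\xo}$-factor (using $\mathbf{v}_{\xo}^2\leq 2L\mathbf{v}_{\xo}$ and the defining inequality of $\Phi$) produces a term of order $h^{-1}(1+|D\ell|)^{p_2-1}\mathbf{V}^{1/2}(\rho)\sqrt{\Phi}$, while the $R_h$-residual contributes $(1+|D\ell|)^{p_2-1}\mu(h/(1+|D\ell|))\sqrt{\Phi}$. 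Choosing $h$ of order $(1+|D\ell|)\mathbf{V}^{1/2}(\rho)$ balances these two, and absorbing $\sqrt{\Phi}\leq 1/6$ (legitimate since we operate in the regime $\Phi(\xo,D\ell,\rho)\leq 1/36$) yields the hybrid $\mathbf{V}^{1/2}(\rho)$ contribution, with the residual from the initial Cauchy--Schwarz separation feeding into the $\Phi$ term. Collecting all estimates and dividing by $(1+|D\ell|)^{p_2-1}$ gives the claimed bound.
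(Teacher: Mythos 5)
Your decomposition is genuinely different from the paper's, and the difference is not cosmetic: your chain $a(x,\lx,Du) \to a(x,\lx,D\ell) \to D_z a(x,\lx,D\ell)(Du-D\ell) \to (D_z a(\cdot,\lx,D\ell))_{\xo,\rho}(Du-D\ell)$ produces the error term $E_3 = [D_z a(x,\lx,D\ell)-(D_z a(\cdot,\lx,D\ell))_{\xo,\rho}](Du-D\ell)$, which is the VMO defect of $D_za$, not of $a$. The paper avoids this entirely. It writes
\begin{align*}
(D_za(\cdot,\lx,D\ell))_{\xo,\rho}Dv
&= \int_0^1\Big[(D_za(\cdot,\lx,D\ell))_{\xo,\rho} - (D_za(\cdot,\lx,D\ell+tDv))_{\xo,\rho}\Big]Dv\dt\\
&\quad + \Big[(a(\cdot,\lx,Du))_{\xo,\rho} - (a(\cdot,\lx,D\ell))_{\xo,\rho}\Big],
\end{align*}
so the fundamental-theorem term is already an average in $x$ of the modulus-of-continuity difference of $D_za$ in $z$ (controlled purely by $\mu$ via \vmoctsdifferentiability), and the remaining piece $(a(\cdot,\lx,Du))_{\xo,\rho}$ is compared pointwise to $a(x,\lx,Du)$ via \vmo, i.e.\ the VMO defect of $a$ itself at $z = Du(x)$. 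The hypothesis \vmo\ is never asked to control the oscillation of $D_za$.

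Your finite-difference reduction of $E_3$ does not actually close. First, a scaling error: after applying \vmo\ at the shifted argument $D\ell + h\zeta$, multiplying by $|Du-D\ell|$, and using Cauchy--Schwarz with $(\fint|Du-D\ell|^2\dx)^{1/2}\leq(1+|D\ell|)\sqrt{\Phi}$, the first contribution is of order $h^{-1}(1+|D\ell|)^{p_2}\mathbf{V}^{1/2}(\rho)\sqrt{\Phi}$, not $h^{-1}(1+|D\ell|)^{p_2-1}\mathbf{V}^{1/2}(\rho)\sqrt{\Phi}$ as you write; one power of $(1+|D\ell|)$ is missing. Second, and more fundamentally, even with the corrected scaling the balancing is between $\frac{1}{s}\mathbf{V}^{1/2}(\rho)$ and $\mu(s)$ with $s=h/(1+|D\ell|)$, whose common value at the balance point is $\mu(s_*)$ where $s_*\mu(s_*)=\mathbf{V}^{1/2}(\rho)$. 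For a general concave modulus $\mu$ (which can decay arbitrarily slowly, e.g.\ $\mu(t)\sim 1/\log(e/t)$), this quantity is not dominated by $\mathbf{V}^{1/2}(\rho)$, nor by $\Phi$, nor by $\mu(\sqrt{\Phi})\sqrt{\Phi}$, so it cannot be absorbed into the claimed right-hand side. Your proposed $h\sim(1+|D\ell|)\mathbf{V}^{1/2}(\rho)$ makes the VMO contribution $\sim(1+|D\ell|)^{p_2-1}\sqrt{\Phi}$ with no smallness factor at all, which is strictly weaker than every term on the right. The remainder of your proposal (terms II, III, IV, V) matches the paper in spirit, but the treatment of $E_3$ is a genuine gap; you should replace it with the paper's averaged fundamental-theorem identity, which makes $E_3$ disappear.
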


\begin{proof}[Proof of Lemma~\ref{aharmy}:]
Taking some $\varphi\in C^1_0(\brhoxo,\RN)$ with $\|D\varphi\|_{L^\infty(\brhoxo,\RN)}= 1$, we set $v = u - \ell$ and begin by noting
\begin{align*}
	\fint_{\brhoxo}&\big(D_{z}a(\cdot,\lx,D\ell)\big)_{\xo,\rho}Dv\cdot D\varphi\dx\\
		&= \fint_{\brhoxo}\int_{0}^{1}\Big[\big(D_{z}a(\cdot,\lx,D\ell)\big)_{\xo,\rho} - \big(D_{z}a(\cdot,\lx,D\ell + tDv)\big)_{\xo,\rho}\Big]Dv\cdot D\varphi\dt\dx
		\nl + \fint_{\brhoxo}\Big[\big(a(\cdot,\lx,Du)\big)_{\xo,\rho} - a(x,\lx,Du)\Big]\cdot D\varphi\dx
		\nl + \fint_{\brhoxo}\big[a(x,\lx,Du) - a(x,u,Du)\big]\cdot D\varphi\dx
		\nl + \fint_{\brhoxo}b(x,u,Du)\cdot\varphi\dx\\
		&= \text{I} + \text{II} + \text{III} + \text{IV},
\end{align*}
with the obvious labelling.

To estimate I we use the differentiability condition \vmoctsdifferentiability, which differs in the super and subquadratic cases. For $2\leq p_2$ we calculate pointwise via \vmoctsdifferentiability\, Lemma~\ref{camp82}, \vprop$(iv)$ on the sets $S_+$ and $S_-$, and the bound $\mu\leq1$
\begin{align*}
	\bigg|\int_{0}^{1}&\Big[\big(D_{z}a(\cdot,\lx,D\ell)\big)_{\xo,\rho} - \big(D_{z}a(\cdot,\lx,D\ell + tDv(x))\big)_{\xo,\rho}\Big]\dt Dv\cdot D\varphi\bigg|\\
		&\leq \int_{0}^{1}\bigg|\fint_{\brhoxo}D_{z}a(\cdot,\lx,D\ell) - D_{z}a(\cdot,\lx,D\ell + tDv(x))\dy\bigg|\dt|Dv||D\varphi|\\
		&\leq L\int_{0}^{1}\fint_{\brhoxo}\bigg|\mu\bigg(\frac{|Du(x)-D\ell|}{1 + |D\ell|}\bigg)\big(1 + |D\ell| + |D\ell + tDv(x)|\big)^{p_2-2}\bigg|\dy\dt|Dv|\\
		&\leq c\big(1 +|D\ell|\big)^{p_2-1}\mu\bigg(\frac{|Du-D\ell|}{1 + |D\ell|}\bigg)\bigg[\bigg|\frac{Du-D\ell}{1 + |D\ell|}\bigg| + \bigg|\frac{Du-D\ell}{1 + |D\ell|}\bigg|^{p_2-1}\bigg]\\
		&\leq c(1 + |D\ell|)^{p_2-1}\bigg[\mu\bigg(\frac{|Du(x)-D\ell|}{1 + |D\ell|}\bigg)\bigg|V\bigg(\frac{Du-D\ell}{1 + |D\ell|}\bigg)\bigg| + \bigg|V\bigg(\frac{Du-D\ell}{1 + |D\ell|}\bigg)\bigg|^2\bigg].
\end{align*}

On the other hand, when $1<p_2<2$ we find via \vmoctsdifferentiability\, Lemma~\ref{camp82} and the fact $\mu\leq1$
\begin{align*}
	\bigg|\int_{0}^{1}&\Big[\big(D_{z}a(\cdot,\lx,D\ell)\big)_{\xo,\rho} - \big(D_{z}a(\cdot,\lx,D\ell + tDv(x))\big)_{\xo,\rho}\Big]\dt Dv\cdot D\varphi\bigg|\\
		&\leq L\int_{0}^{1}\fint_{\brhoxo}\bigg|\mu\bigg(\frac{|Du(x)-D\ell|}{1 + |D\ell|}\bigg)\bigg[\frac{1 + |D\ell| + |D\ell + tDv(x)|}{(1 + |D\ell|)(1 + |D\ell + tDv(x)|)}\bigg]^{2-p_2}\bigg|\dy\dt|Dv|\\
		&\leq c\int_{0}^{1}\bigg|\mu\bigg(\frac{|Du-D\ell|}{1 + |D\ell|}\bigg)\bigg[\frac{1 + |D\ell| + |Dv|}{(1 + |D\ell + tDv|)}\bigg]^{2-p_2}\bigg|\dt|Dv|\\
		&\leq c(1 + |D\ell|)^{p_2-1}\mu\bigg(\frac{|Du-D\ell|}{1 + |D\ell|}\bigg)\bigg|\frac{Du - D\ell}{1 + |D\ell|}\bigg|\\
		&\leq c(1 + |D\ell|)^{p_2-1}\bigg[\mu\bigg(\frac{|Du-D\ell|}{1 + |D\ell|}\bigg)\bigg|V\bigg(\frac{Du-D\ell}{1 + |D\ell|}\bigg)\bigg|\chi_{S_-} + \bigg|V\bigg(\frac{Du-D\ell}{1 + |D\ell|}\bigg)\bigg|^2\chi_{S_+}\bigg]\\
\end{align*}
In either case, after integrating and applying H\"older's inequality, the right hand side is just
\begin{align*}
	c(1 + |D&\ell|)^{p_2-1}\fint_{\brhoxo}\bigg|V\bigg(\frac{Du-D\ell}{1 + |D\ell|}\bigg)\bigg|^2 + \mu\bigg(\bigg|V\bigg(\frac{Du-D\ell}{1 + |D\ell|}\bigg)\bigg|\bigg)\bigg|V\bigg(\frac{Du-D\ell}{1 + |D\ell|}\bigg)\bigg|\dx\\
	&\leq c(1 + |D\ell|)^{p_2-1}\Bigg[\Phi(\xo,D\ell,\rho)
		 + \Bigg(\fint_{\brhoxo}\mu^2\bigg(\bigg|V\bigg(\frac{Du-D\ell}{1 + |D\ell|}\bigg)\bigg|\bigg)\dx\Phi(\xo,D\ell,\rho)\Bigg)^{\frac{1}{2}}\Bigg].
\end{align*}
Keeping in mind the concavity of $\mu^2$, we apply Jensen's and H\"older's inequalities to conclude
\begin{align*}
	\text{I}&\leq c(1 + |D\ell|)^{p_2-1}\Big(\Phi(\xo,D\ell,\rho) + \mu\Big(\sqrt{\Phi(\xo,D\ell,\rho)}\Big)\sqrt{\Phi(\xo,D\ell,\rho)}\Big).
\end{align*}
We briefly note that when $|Du|\leq|D\ell|$, since $\log(1+|z|)\leq C(\delta)|z|^\delta$ we easily have
\begin{align*}
	(1 + |D\ell|)^{p_2-1}\mathbf{V}^\frac{1}{2}(\rho)\left(\fint_{\brhoxo}\log^\gamma(1 + |Du|)\dx\right)^\frac{1}{2}\nonumber
		&\leq c(1 + |D\ell|)^{p_2-1}\mathbf{V}^\frac{1}{2}(\rho).
\end{align*}
On the other hand, when $|D\ell|<|Du|$ the same estimate with Lemma~\ref{frozenexponent} and Corollary~\ref{affinest}~$(i)$ imply
\begin{align*}
	(1 + |D\ell|)^{p_2-1}\mathbf{V}^\frac{1}{2}(\rho)&\left(\fint_{\brhoxo}\log^\gamma(1 + |Du|)\dx\right)^\frac{1}{2}\nonumber\\
		&\leq (1 + |D\ell|)^{\frac{p_2-2}{2}}\mathbf{V}^\frac{1}{2}(\rho)\left(\fint_{\brhoxo}(1 + |Du|)^{p_2}\log^\gamma(1 + |Du|)\dx\right)^\frac{1}{2}\nonumber\\
		&\leq c(1 + |D\ell|)^{p_2-1}\mathbf{V}^\frac{1}{2}(\rho).
\end{align*}
Taken together, we conclude
\begin{align}\label{logminus}
	(1 + |D\ell|)^{p_2-1}\mathbf{V}^\frac{1}{2}(\rho)&\left(\fint_{\brhoxo}\log^\gamma(1 + |Du|)\dx\right)^\frac{1}{2}
		\leq c(1 + |D\ell|)^{p_2-1}\mathbf{V}^\frac{1}{2}(\rho).
\end{align}

In estimating II we begin by using the VMO condition \vmo, and Young's inequality
\begin{align*}
	\text{II}
		&\leq \fint_{\brhoxo}\mathbf{v}_{\xo}(x,\rho)(1 + |Du|)^{p_2-1}\big[1 + \log(1 + |Du|)\big]\dx\\
		&\leq c\fint_{\brhoxo}\mathbf{v}_{\xo}(x,\rho)(1 + |D\ell| + |Du-D\ell|)^{p_2-1}\big[1 + \log(1 + |Du|)\big]\dx\\
		&\leq c(1 + |D\ell|)^{p_2-1}\fint_{\brhoxo}\bigg[\mathbf{v}_{\xo}(x,\rho) + \mathbf{v}_{\xo}(x,\rho)\bigg|\frac{Du-D\ell}{1 + |D\ell|}\bigg|^{p_2-1}\bigg]\big[1 + \log(1 + |Du|)\big]\dx\\
		&\leq c(1 + |D\ell|)^{p_2-1}\fint_{\brhoxo}\mathbf{v}_{\xo}(x,\rho)\log(1 + |Du|) + \big[\mathbf{v}_{\xo}(x,\rho)\log(1 + |Du|)\big]^{p_2}\dx
		\nl + c(1 + |D\ell|)^{p_2-1}\fint_{\brhoxo}\mathbf{v}_{\xo}(x,\rho) + [\mathbf{v}_{\xo}(x,\rho)]^{p_2} + \bigg|\frac{Du-D\ell}{1 + |D\ell|}\bigg|^{p_2}\dx.
\end{align*}
Now since $0\leq\mathbf{v}\leq 2L$, we have $\mathbf{v}_{\xo}^{p_2}(x,\rho)\leq[2L]^{p_2-1}\mathbf{v}_{\xo}(x,\rho)$. Applying H\"older's inequality (with exponents $(2,2)$) and Young's inequalities (with exponent pairs $(2,2)$ and $(p_2,\frac{p_2}{p_2-1})$), \eqref{logminus} and \vprop$(iv)$, we compute for $2\leq p_2$
\begin{align*}
	\text{II}&\leq c(1 + |D\ell|)^{p_2-1}\fint_{\brhoxo}\mathbf{v}_{\xo}(x,\rho)\Big[1 + \log(1 + |Du|) + \log^{p_2}(1 + |Du|)\Big] + \bigg|V\bigg(\frac{Du-D\ell}{1 + |D\ell|}\bigg)\bigg|^{2}\dx\\
		&\leq c(1 + |D\ell|)^{p_2-1}\Bigg(\mathbf{V}(\rho) + \fint_{\brhoxo}\bigg|V\bigg(\frac{Du-D\ell}{1 + |D\ell|}\bigg)\bigg|^{2}\dx
		\nl + \mathbf{V}^\frac{1}{2}(\rho)\bigg[\bigg(\fint_{\brhoxo}\log^2(1 + |Du|)\dx\bigg)^\frac{1}{2} + \bigg(\fint_{\brhoxo}\log^{2p_2}(1 + |Du|)\dx\bigg)^\frac{1}{2}\bigg]\Bigg)\\
		&\leq c(1 + |D\ell|)^{p_2-1}\Big(\mathbf{V}(\rho) + \mathbf{V}^\frac{1}{2}(\rho) + \Phi(\xo,D\ell,\rho)\Big).
\end{align*}

In the subquadratic case, the calculations on $S_+$ are identical. On $S_-$ we change only the exponent in Young's inequality for a single term (using the pair $(\frac{2}{p_2-1},\frac{2}{3-p_2})$) to compute
\begin{align*}
	(1 + &|D\ell|)^{p_2-1}\fint_{\brhoxo}\mathbf{v}_{\xo}(x,\rho)\bigg|\frac{Du-D\ell}{1 + |D\ell|}\bigg|^{p_2-1}\big[1 + \log(1 + |Du|)\big]\dx\\
		&\leq (1 + |D\ell|)^{p_2-1}\fint_{\brhoxo}\mathbf{v}_{\xo}^{\frac{2}{3-p_2}}(x,\rho)\big[1 + \log(1 + |Du|)\big]^{\frac{2}{3-p_2}} + \bigg|\frac{Du-D\ell}{1 + |D\ell|}\bigg|^{2}\dx\\
		&\leq c(1 + |D\ell|)^{p_2-1}\Bigg(\mathbf{V}(\rho) + \Phi(\xo,D\ell,\rho) + \mathbf{V}^{\frac{1}{2}}(\rho)\bigg(\fint_{\brhoxo}\log^{\frac{4}{3-p_2}}(1 + |Du|^2)\dx\bigg)^{\frac{1}{2}}\Bigg)\\
		&\leq c(1 + |D\ell|)^{p_2-1}\Big(\mathbf{V}(\rho) + \Phi(\xo,D\ell,\rho) + \mathbf{V}^{\frac{1}{2}}(\rho)\Big).
\end{align*}

Owing to \vmocontinu\ we have for III and all $p_2>1$
\begin{align*}
	\text{III}	
		&\leq c\fint_{\brhoxo}\omega_{\xi}\big(|u - \lx|\big)(1 + |Du|)^{p(x) - 1}\dx\\
		&\leq c\left(1 + |D\ell|\right)^{p_2-1}\fint_{\brhoxo}\omega_{\xi}\big(|u - \lx|\big)\bigg[1 + \bigg|\frac{Du-D\ell}{1 + |D\ell|}\bigg|^{p_2-1}\bigg]\dx.
\end{align*}
Now when $p\geq2$ we use Young's inequality (with exponent pair $(p_2,\frac{p_2}{p_2-1})$), the fact $\omega_{\xi}\leq1$, \vprop$(iv)$, then Jensen's inequality to calculate
\begin{align}\label{j0}
	 \fint_{\brhoxo}\omega_{\xi}&\big(|u - \lx|\big)\bigg[1 + \bigg|\frac{Du-D\ell}{1 + |D\ell|}\bigg|^{p_2-1}\bigg]\dx\\
		&\leq c\Bigg[\omega_{\xi}\bigg(\fint_{\brhoxo}\big|u - \lx\big|\dx\bigg) + \fint_{\brhoxo}\bigg|V\bigg(\frac{Du-D\ell}{1 + |D\ell|}\bigg)\bigg|^{2}\dx\Bigg]\nonumber.
\end{align}
When $1<p_2<2$, the estimates on $S_+$ are analogous to the superquadratic case, and on $S_-$ we change only the exponents in Young's inequality to $(\frac{2}{p_2-1},\frac{2}{3-p_2})$, to deduce
\begin{align}\label{j2}
	 \fint_{\brhoxo}\omega_{\xi}&\big(|u - \lx|\big)\bigg[1 + \bigg|\frac{Du-D\ell}{1 + |D\ell|}\bigg|^{p_2-1}\bigg]\chi_{S_-}\dx\\
		&\leq c\Bigg[\omega_{\xi}\bigg(\fint_{\brhoxo}\big|u - \lx\big|\dx\bigg) + \fint_{\brhoxo}\bigg|V\bigg(\frac{Du-D\ell}{1 + |D\ell|}\bigg)\bigg|^{2}\dx\Bigg]\nonumber.
\end{align}
By comparing \eqref{j0} to \eqref{j2}, we see that for any $p_2>1$ we have
\begin{align*}
	\text{III}	&\leq c(1 + |Du|)^{p_2-1}\Bigg[\omega_{\xi}\bigg(\fint_{\brhoxo}\big|u - \lx\big|\dx\bigg) + \fint_{\brhoxo}\bigg|V\bigg(\frac{Du-D\ell}{1 + |D\ell|}\bigg)\bigg|^{2}\dx\Bigg].
\end{align*}
Since the inhomogeneity $b$ satisfies condition \vmocontrol, we can estimate via Corollary~\ref{affinest}~$(i)$
\begin{align*}
	\text{IV}
		\leq L\fint_{\brhoxo}(1 + |Du|)^{p_2 - 1}\rho\dx
		\leq c(1 + |D\ell|)^{p_2 - 1}\rho.
\end{align*}

Assembling our terms, perhaps restricting $\rhoo$ to ensure $\mathbf{V}(\rho)<1$, we have in either case
\begin{align*}
	\bigg|\fint_{\brhoxo}&\big(D_{z}a(\cdot,\urho,D\ell)\big)_{\xo,\rho}Dv\cdot D\varphi\dx\bigg|\\
		&\leq c_{1}(1 + |D\ell|)^{p_2-1}\Bigg(\mu\Big(\sqrt{\Phi(\xo,D\ell,\rho)}\Big)\sqrt{\Phi(\xo,D\ell,\rho)} + \mathbf{V}^\frac{1}{2}(\rho) + \Phi(\xo,D\ell,\rho)
		\nl \qquad\qquad + \omega_{\xi}\bigg(\fint_{\brhoxo}\big|u - \lx\big|\dx\bigg) + \rho\Bigg)\|D\varphi\|_{C(\brhoxo,\RN)}.
\end{align*}
This shows the claim for test functions satisfying $\|D\varphi\|_{L^\infty(\brhoxo,\RN)} = 1$, the full result follows via rescaling of the test function.\end{proof}
\section{Application of the $\mathcal{A}$-harmonic approximation lemma}\label{aharmapp}
We now recall
\begin{align*}
	\Psi(\xo,\ell,\rho)&:=\fint_{\brhoxo}\bigg|V\bigg(\frac{u - \ell}{\rho(1 + |D\ell|)}\bigg)\bigg|^2\dx,\qquad
	\Phi(\xo,D\ell,\rho):=\fint_{\brhoxo}\bigg|V\bigg(\frac{Du - D\ell}{1 + |D\ell|}\bigg)\bigg|^2\dx,\\
\end{align*}
and write
\begin{align*}
	 \Upsilon(\xo,\ell,\rho)&:=\omega_{\xi}\bigg(\fint_{\brhoxo}\big|u-\lx\big|\dx\bigg),\qquad
	 M(\xo,\rho):=\rho\fint_{\brhoxo}|Du|\dx.
\end{align*}
We further define $E(\xo,\ell,\rho):=\Psi(\xo,\ell,\rho) + \mathbf{V}^{\frac{1}{2}}(\rho) + \Upsilon(\xo,\ell,\rho) + \omega_p(\rho) + \rho.$ In view of this notation, we can estimate the Caccioppoli inequality from Lemma~\ref{eqcacc} by
\begin{align}\label{shortcacc}
	\Phi\Big(\xo,D\ell,\frac{\rho}{2}\Big)\leq c_{c}E(\xo,\ell,\rho).
\end{align}
Plugging \eqref{shortcacc} into Lemma~\ref{aharmy} on $\bhalfrhoxo$ we deduce that provided $\Phi\left(\xo,D\ell,\frac{\rho}{2}\right)\leq \frac{1}{16}$, there holds
\begin{align*}
	\bigg|\fint_{\bhalfrhoxo}&\big(D_{z}a(\cdot,\lx,D\ell)\big)_{\xo,\frac{\rho}{2}}Dv\cdot D\varphi\dx\bigg|\\
		&\leq c_{1}(1 + |D\ell|)^{p_2-1}\bigg[\Phi\big(\xo,D\ell,\rho/2\big) + \mu\Big(\sqrt{\Phi(\xo,D\ell,\rho/2)}\Big)\sqrt{\Phi(\xo,D\ell,\rho/2)}
		\nl + \mathbf{V}^{\frac{1}{2}}\big(\rho/2\big) + \Upsilon(\xo,\ell,\rho) + \rho\bigg]\\
		&\leq c_{1}c_{c}(1 + |D\ell|)^{p_2-1}\left(E(\xo,\ell,{\rho}) + \mu\left(\sqrt{E\left(\xo,\ell,{\rho}\right)}\right)\sqrt{E\left(\xo,\ell,\rho\right)}\right)\\
		&\leq c_{a}(1 + |D\ell|)^{p_2-1}\left[\sqrt{E(\xo,\ell,{\rho})} + \mu\left(\sqrt{E\left(\xo,\ell,{\rho}\right)}\right)\right]\sqrt{E\left(\xo,\ell,\rho\right)},
\end{align*}
where we have relabelled the constant.

Having established this preliminary estimate, we can fix these excesses small enough to invoke the $\mathcal{A}$-harmonic approximation lemma. The a priori bounds on the solution to the linearised PDE, combined with our Caccioppoli inequality, allow us to demonstrate a preliminary rescaling estimate on the Campanato style excess functional $\Phi$. This estimate is then iterated, and finally an interpolation argument is provided to reproduce the estimate at all scales. 

\subsection{$\mathcal{A}$-harmonic approximation}

For $\delta_{0}$ to be chosen later we now restrict $\rho$ to be small enough to ensure
\begin{align}\label{smallness}
	\kappa := \sqrt{E\big(\xo,\lrhoxo,\rho\big)}\leq 1
	\qquad\mbox{and}\qquad
	\sqrt{E\big(\xo,\lrhoxo,\rho\big)} + \mu\Big(\sqrt{E\big(\xo,\lrhoxo,\rho\big)}\Big) \leq \delta_{_{0}}.
\end{align}

We now set $\ell = \lhalfrhoxo$ so Lemma~\ref{aharmy} is satisfied with
\begin{align*}
	\mathcal{A} &:=  \frac{\big(D_{z}a(\cdot,\uhalfrho,D\lhalfrhoxo)\big)_{\xo,\frac{\rho}{2}}}{c_{a}(1 + |D\lhalfrhoxo|)^{p_2-2}}
	\qquad\mbox{and}\qquad
	w := \frac{u - \lhalfrhoxo}{1 + |D\lhalfrhoxo|}.
\end{align*}
Given any $\varepsilon>0$, this implies via Lemma~\ref{aharmonic} the existence of an $\mathcal{A}-$harmonic function $h$ satisfying the a priori estimates
\begin{align}\label{pest}
	\sup_{\bhalfrhoxo}\bigg(2\bigg|\frac{h}{\rho}\bigg| + |Dh| + \frac{\rho}{2}|D^2h|\bigg) &\leq c_{h}
\qquad\mbox{and}\qquad
	\fint_{\bhalfrhoxo}\bigg|V\bigg(\frac{w - \kappa h}{\rho/2}\bigg)\bigg|^2\dx \leq \kappa^2\varepsilon,
\end{align}
provided $\delta_{0}$ is small enough.

\section{ Preliminary decay estimate}
Note that via Taylor's theorem  and \eqref{pest} we immediately have
\begin{align}\label{taylor}
	\sup_{\bthetarhoxo}|h(x) - h(\xo) - Dh(\xo)(x-\xo)| \leq \sup_{\brhoxo}|D^2h(\theta\rho)^2|\leq c_{h}\theta^2\rho
\end{align}
for any $x\in\bthetarhoxo$ where $\theta\in(0,\frac{1}{4})$.

We further impose the smallness condition $\kappa\leq c_h^{-1}$ on \eqref{smallness}, which ensures
\begin{align*}
	\kappa\frac{|h(x) - h(\xo) - Dh(\xo)(x-\xo)|}{\theta\rho} \leq c_{_{h}}\kappa\theta<1,
\end{align*}
and so by \vprop$(iv)$ we have
\begin{align}
	\fint_{\bthetarhoxo}\bigg|V\bigg(\kappa\frac{h - h(\xo) - Dh(\xo)(x-\xo)}{\theta\rho}\bigg)\bigg|^2\dx \leq c_{h}^2\theta^2\kappa^2.\label{taylorv}
\end{align}
\begin{lemma}\label{prdees}
For every $M>2$ there exist constants $0< \hat\rho,\theta<\frac{1}{4}$ such that whenever $\Psi(\xo, D\ell, \theta\rho)\leq\frac{1}{16}$ and $|D\ell|<M$ and the smallness conditions
\begin{align}
	\Psi(\xo,\lrhoxo,\rho) < \bigg(\frac{1}{4}\frac{\theta^{n+1}}{n+2}\bigg)^2, \qquad
	E(\xo,\lrhoxo,{\rho})\leq\frac{1}{c^2_{h}},\qquad
	\varepsilon\leq \theta^{n + 2 +\max\{2,p_2\}},\nonumber\\
	\rho < \hat\rho\qquad
	\mbox{and}\qquad\sqrt{E(\xo,\lrhoxo,{\rho})} + \mu\Big(\sqrt{E(\xo,\lrhoxo,\rho)}\Big) \leq \delta_{0}
\label{nonlemmasmallness}
\end{align}
hold, then for all $k\in\N$ we have
\begin{align*}
	\Psi(\xo,\lplusthetarhoxo,\theta\rho) &\leq c_{d}\theta^2E\left(\xo,\lplusrhoxo,\rho\right).
\end{align*}

\end{lemma}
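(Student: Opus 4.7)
The plan is to exploit the $\A$-harmonic comparison function $h$ constructed at the end of Section~\ref{aharmapp} and to compare $u$ on the smaller concentric ball $\bthetarhoxo$ with a carefully chosen affine competitor built from the first-order Taylor expansion of $\kappa h$ at $\xo$. The natural candidate is
\[
	\Upsilon(x) := \lhalfrhoxo(x) + \kappa\bigl(1 + |D\lhalfrhoxo|\bigr)\bigl[h(\xo) + Dh(\xo)(x-\xo)\bigr],
\]
which is the affine approximation at $\xo$ to the full comparison map $\lhalfrhoxo + \kappa(1+|D\lhalfrhoxo|)h$.

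First I would invoke the quasi-minimisation property of $\lthetarhoxo$ from Lemma~\ref{minil}, applied with scaling $\lambda = [\theta\rho(1+|D\lthetarhoxo|)]^{-1}$, to replace $\lthetarhoxo$ by $\Upsilon$ in the Campanato excess $\Psi(\xo,\lthetarhoxo,\theta\rho)$. The smallness assumption $\Psi(\xo,\lrhoxo,\rho) < (\frac{1}{4}\frac{\theta^{n+1}}{n+2})^2$ is designed exactly so that Corollary~\ref{gradscales}, applied both at scale $1/2$ and at scale $\theta$, yields the comparability $1+|D\lhalfrhoxo|\sim 1+|D\lrhoxo|\sim 1+|D\lthetarhoxo|$ with constants depending only on $n$, so that normalising factors can be interchanged freely throughout the argument.

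Second, I would decompose
\[
	u - \Upsilon = \bigl(1+|D\lhalfrhoxo|\bigr)\bigl[w - \kappa h\bigr] + \kappa\bigl(1+|D\lhalfrhoxo|\bigr)\bigl[h - h(\xo) - Dh(\xo)(x-\xo)\bigr],
\]
and invoke \vprop $(ii)$ to split the resulting $|V|^2$-integral into an approximation-error piece and a Taylor-error piece. For the approximation-error piece, passing from $\bthetarhoxo$ to $\bhalfrhoxo$ costs a volume factor $(2\theta)^{-n}$, while rescaling the denominator in the argument of $V$ from $\rho/2$ to $\theta\rho$ costs a factor $(2\theta)^{-\max\{2,p_2\}}$ via \vprop $(i)$; combined with the a priori bound $\kappa^2\varepsilon$ from \eqref{pest} this produces a contribution of size $c\theta^{-n-\max\{2,p_2\}}\kappa^2\varepsilon$. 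The calibration $\varepsilon \leq \theta^{n+2+\max\{2,p_2\}}$ stated in the hypothesis is precisely what absorbs this loss and leaves a residual $c\theta^2\kappa^2$. The Taylor-error piece is controlled directly by $c_h^2\theta^2\kappa^2$ through \eqref{taylorv}.

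Summing the two pieces and recalling $\kappa^2 = E(\xo,\lrhoxo,\rho)$ from \eqref{smallness} yields the claimed decay
\[
	\Psi(\xo,\lthetarhoxo,\theta\rho) \leq c_d\theta^2 E(\xo,\lrhoxo,\rho),
\]
with $c_d$ depending on $n, N, p_2, c_h$ and the constants from \vprop and Lemma~\ref{minil}. The auxiliary radius $\hat\rho$ is then taken small enough to ensure that $\mathbf{V}(\rho)$, $\omega_p(\rho)$ and the higher-integrability threshold of Lemma~\ref{frozenexponent} remain admissible throughout. The delicate step I expect will be the bookkeeping of $\theta$-powers: the specific exponent $n+2+\max\{2,p_2\}$ imposed on $\varepsilon$ is dictated exactly by the interplay between volume loss, the dilation behaviour of $V$ and the change of denominator in the Campanato excess, and extra care is needed in the subquadratic regime $1<p_2<2$, where $V$ scales differently and the sets $S_\pm$ from the Caccioppoli inequality must be invoked to reconcile the two cases.
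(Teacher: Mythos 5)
Your first half of the argument matches the paper's proof almost verbatim: the affine competitor built from the first-order Taylor expansion of $\kappa h$ at $\xo$, the use of Lemma~\ref{minil} to replace $\lthetarhoxo$, the splitting via \vprop$(ii)$ into an approximation-error piece controlled by \eqref{pest} and a Taylor-error piece controlled by \eqref{taylorv}, and the calibration $\varepsilon\leq\theta^{n+2+\max\{2,p_2\}}$ to kill the $\theta^{-n-\hat p}$ loss. That part is correct and is essentially the paper's derivation of the inequality $\Psi(\xo,\lthetarhoxo,\theta\rho)\leq c\theta^2 E(\xo,\lhalfrhoxo,\rho)$.

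The gap is in your last step, where you close the argument by ``recalling $\kappa^2=E(\xo,\lrhoxo,\rho)$'' as though it were a bookkeeping triviality. The $\A$-harmonic approximation was set up with $\ell=\lhalfrhoxo$, so the Caccioppoli bound feeding into Lemma~\ref{aharmonic} is $\Phi(\xo,D\lhalfrhoxo,\rho/2)\leq c_cE(\xo,\lhalfrhoxo,\rho)$, not $c_cE(\xo,\lrhoxo,\rho)$; the $\kappa^2$ that actually controls the excess on the right of \eqref{pest} is tied to $\lhalfrhoxo$. To reach the stated conclusion with $E(\xo,\lrhoxo,\rho)$ on the right you must prove the comparison $E(\xo,\lhalfrhoxo,\rho)\leq cE(\xo,\lrhoxo,\rho)$, and this is roughly half the paper's proof. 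It is not supplied by Corollary~\ref{gradscales} alone, which only compares the \emph{gradients} $1+|D\lhalfrhoxo|$ and $1+|D\lrhoxo|$: for the $\Psi$-part of $E$ one also needs a pointwise bound on the zeroth-order difference $|\lhalfrhoxo(\xo)-\lrhoxo(\xo)|$ in terms of $\fint_{\brhoxo}|u-\lrhoxo|$ (this is the paper's \eqref{utrick}, via Lemma~\ref{affinegrads}), together with the almost-convexity of $V$ \eqref{almostconvex} and Jensen's inequality to push the bound inside the $V$; and for the $\Upsilon$-part one needs the concavity/subadditivity of $\omega_\xi$ plus Lemma~\ref{minil}~$(i)$. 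The remaining terms of $E$ depend only on $\rho$ and are trivially monotone. Without this step, the conclusion as written does not follow. (Minor: the reliance you anticipate on the sets $S_\pm$ from the Caccioppoli proof does not arise in this lemma — the subquadratic case here is handled entirely by \vprop$(i)$, $(ii)$, $(iv)$ and \eqref{almostconvex}.)
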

\begin{proof}[Proof of Lemma~\ref{prdees}:]
Using Corollary~\ref{gradscales} and Lemma~\ref{minil} with $\lambda = \frac{1}{\theta\rho(1 + |D\lplushalfrhoxo|)}$ and \vprop$(ii)$ then \eqref{pest} and \eqref{taylorv}, we write $\hat{p}=\max\{2,p_2\}$ and calculate
\begin{align*}
	\Psi(\xo,\lplusthetarhoxo,\theta\rho)
		&\leq 2^{p_2}\fint_{\bthetarhoxo}\bigg|V\bigg(\frac{w-\kappa(h(\xo) + Dh(\xo)(x-\xo)}{\theta\rho}\bigg)\bigg|^{2}\dx\\
		&\leq c(n,p_2)\theta^{-n-\hat{p}}\fint_{\bhalfrhoxo}\bigg|V\bigg(\frac{w-\kappa h}{\rho/2}\bigg)\bigg|^{2}\dx
		\nl+2^{2{p_2}}\fint_{\bthetarhoxo}\bigg|V\bigg(\kappa\frac{h - h(\xo) - Dh(\xo)(x-\xo)}{\theta\rho}\bigg)\bigg|^{2}\dx\\
		&\leq c(n,p_2)\theta^{-n-\hat{p}}\kappa^2\varepsilon + 2^{2{p_2}}c_{_{h}}^2\theta^2\kappa^2.
\end{align*}
Choosing $\varepsilon\leq \theta^{n + 2 +\max\{2,\hat p\}}$ and keeping in mind our definition of $\kappa$, this is simply
\begin{align}\label{almostdecay}
	\Psi(\xo,\lplusthetarhoxo,\theta\rho) &\leq c\theta^2E(\xo,\lplushalfrhoxo,\rho).
\end{align} 

To show $E(\xo,\lplushalfrhoxo,\rho) \leq cE(\xo,\lplusrhoxo,\rho)$, we first consider the term $\Psi$. By \vprop$(i)$~and~$(ii)$, together with Corollary~\ref{gradscales} we have
\begin{align}\label{praline}
	\Psi(\xo,\lplushalfrhoxo,\rho) 
		&\leq \Bigg[\frac{1 + |D\lplusrhoxo|}{1 + |D\lplushalfrhoxo|}\Bigg]^{\hat{p}}\fint_{\brhoxo}\bigg|V\bigg(\frac{u - \lplushalfrhoxo}{\rho(1 + |D\lplusrhoxo|)}\bigg)\bigg|^2\dx\\
		&\leq c\fint_{\brhoxo}\bigg|V\bigg(\frac{u - \lplusrhoxo}{\rho(1 + |D\lplusrhoxo|)}\bigg)\bigg|^2 + \bigg|V\bigg(\frac{\lplushalfrhoxo - \lplusrhoxo}{\rho(1 + |D\lplusrhoxo|)}\bigg)\bigg|^2\dx.\nonumber
\end{align}
Estimate \eqref{dlest1} from Lemma~\ref{affinegrads} and the fact that $\fint_{\bhalfrhoxo}D\ell(x-\xo)\dx=0$ give us the pointwise estimate
\begin{align}\label{utrick}
	\big|\lplushalfrhoxo - \lplusrhoxo\big| &\leq \big|\urho - \uplushalfrho\big| + \rho\big|D\lplushalfrhoxo - D\lplusrhoxo\big|\\
		&\leq\bigg|\fint_{\bhalfrhoxo}u - \lplusrhoxo\dx\bigg| + 2(n+2)\fint_{\bhalfrhoxo}\big|u-\lplusrhoxo\big|\dy\nonumber\\
		&\leq 2^{n+2}(n+3)\fint_{\brhoxo}\big|u-\lplusrhoxo\big|\dy.\nonumber
\end{align}
By the almost-convexity of $V$ given in \eqref{almostconvex} we calculate via Jensen's inequality
\begin{align}\label{fruitjuice}
	\fint_{\brhoxo}\bigg|V\bigg(\frac{\lplushalfrhoxo - \lplusrhoxo}{\rho(1 + |\lplusrhoxo|)}\bigg)\bigg|^2\dx
		&\leq\fint_{\brhoxo}\bigg|V\bigg(\frac{2^{n+2}(n+3)\fint_{\brhoxo}|u(y)-\lplusrhoxo|\dy}{\rho(1 + |\lplusrhoxo|)}\bigg)\bigg|^2\dx\nonumber\\
		&\leq c\fint_{\brhoxo}\bigg|V\bigg(\frac{u-\lplusrhoxo}{\rho(1 + |\lplusrhoxo|)}\bigg)\bigg|^2\dx.
\end{align}
Combining \eqref{praline} and \eqref{fruitjuice}, we have
\begin{align*}
	\Psi(\xo,\lplushalfrhoxo,\rho)
		&\leq c(n,p_2)\Psi(\xo,\lplusrhoxo,\rho),
\qquad\text{provided}\qquad
\Psi(\xo,\lplusrhoxo,\rho)\leq\frac{1}{4}\bigg(\frac{\theta^{n+1}}{n+2}\bigg)^2.
\end{align*}

We now show $\Upsilon(\xo,\lplushalfrhoxo,\rho)\leq c\,\Upsilon(\xo,\lplusrhoxo,\rho)$, which will follow from the concavity (and hence subadditivity) of $\omega_{\xi}$, once we use Lemma~\ref{minil}~$(i)$ to compute as in \eqref{utrick}
\begin{align*}
		\fint_{\brhoxo}|u-\lplushalfrhoxo(\xo)|\dx
		&\leq \fint_{\brhoxo}|u-\urho|\dx + |\uplushalfrho - \urho|\\
		&\leq 2^{n+1}\fint_{\brhoxo}|u-\urho|\dx.
\end{align*}
Since $\omega_{\xi}$ is concave we have
\begin{align*}
	\Upsilon(\xo,\lplushalfrhoxo,\rho)\leq 2^{n+1}\Upsilon(\xo,\lplusrhoxo,\rho).
\end{align*}

Collecting terms, and noting that the other terms in $E\left(\xo,\lplusrhoxo,\rho\right)$ are monotone in $\rho$, we have shown whenever
\begin{align*}
	\Psi(\xo,\lplusrhoxo,\rho)\leq \bigg(\frac{1}{4}\frac{\theta^{n+1}}{n+2}\bigg)^2,
	\qquad \mbox{there holds}\qquad
	E(\xo,\lplushalfrhoxo,\rho)\leq c(n,p_2)E(\xo,\lplusrhoxo,\rho).
\end{align*}
Plugging this into \eqref{almostdecay} we conclude
\begin{align}\label{firstit}
	\Psi(\xo,\lplusthetarhoxo,\theta\rho) &\leq c_{d}\theta^2E(\xo,\lplusrhoxo,\rho),
\end{align}
which is the desired estimate.\end{proof}

\section{ Proof of Theorem~\ref{vmoreg}}

\subsection{Choice of constants}
We now take $\gamma<n$ to be fixed later, and set
\begin{align}\label{choice}\begin{array}{ll}
	\iota = \min\Big\{\frac{1}{6c_{c}}\big(\frac{\theta^{n+1}}{4(n+2)}\big)^{\max\{2,p_2\}}\beta, \big(\frac{1}{4}\frac{\theta^{n+1}}{n+2}\big)^2\Big\},\qquad\qquad
		&\beta = \big(\frac{\theta^{n}}{2c(p)}\big)^{\max\{2,p_2\}},\\
	\theta = \min\Big\{\big(\frac{1}{5c_{_{d}}}\big)^{\frac{1}{2}},\frac{1}{8}, \big(\frac{1}{2}\big)^{\frac{1}{n-\gamma}}\Big\},
		&\hat\rho \leq \min\big\{\frac{\rhoo}{2},{\sigma},\iota\big\},
\end{array}
\end{align}
and ensure $\sigma$ is small enough to satisfy $\omega_{\xi}(\sigma), \mathbf{V}(\sigma), \omega_p(\sigma)\leq\iota$. These constants depend only on $n$, $N$, $M$, $\gamma_1$, $\gamma_2$, $L/\nu$, $E$, $\omega_p,$ and $\mu$.

We can now iterate this procedure to show the following:
\subsection*{Almost BMO estimate}
\addcontentsline{toc}{subsection}{Almost BMO estimate}
\begin{lemma}\label{almostbmo}
For every $M>2$ there exist constants $0< \iota, \beta, \sigma,\hat\rho,\theta<1$ such that whenever the smallness conditions
\begin{align}
	\Psi(\xo,\lrhoxo,\rho) < \iota, \quad\
	\Phi(\xo,&D\lrhoxo,\rho) < \beta, \quad\
	M(\xo,\rho) < \sigma, \quad\
	\text{and}\quad\ \rho < \hat\rho
\end{align}
hold, then for all $k\in\N$ we have
\begin{align*}
	\Psi(\xo,\lthetakrhoxo,\theta^k\rho) < \iota, \qquad
	\Phi(\xo,D\lthetakrhoxo,\theta^k\rho) < \beta, \qquad\mbox{and}\qquad
	M(\xo,\theta^k\rho) < \sigma.
\end{align*}
\end{lemma}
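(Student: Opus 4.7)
The argument is by induction on $k$; the case $k = 0$ is precisely the hypothesis. For the inductive step, we propagate the three listed smallness conditions together with the auxiliary invariant $|D\lthetakrhoxo| \leq M$ for some $M$ depending only on the data. This invariant is needed because the constants in Lemma~\ref{eqcacc}, Lemma~\ref{aharmy}, and Lemma~\ref{prdees} all depend on an upper bound on the gradient of the comparison affine map. It is maintained by summing the telescoping differences $|D\lthetajonerhoxo - D\lthetajrhoxo|$, which Lemma~\ref{affinegrads} (applied with $p=1$) bounds in terms of $\sqrt{\Psi(\xo,\lthetajrhoxo,\theta^j\rho)}\cdot(1+|D\lthetajrhoxo|)$; since $\Psi < \iota$ at each step, the sum is geometrically small in $\sqrt\iota$.

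For the $\Psi$ bound, I will apply Lemma~\ref{prdees} at scale $\theta^k\rho$ with $\ell = \lthetakrhoxo$. The hypotheses $\Psi < \bigl(\tfrac{\theta^{n+1}}{4(n+2)}\bigr)^{\!2}$, $E \leq c_h^{-2}$, and $\sqrt{E}+\mu(\sqrt{E})\leq \delta_0$ all follow because each summand of $E(\xo,\lthetakrhoxo,\theta^k\rho)$ is at most $\iota$: the $\Psi$ term by the inductive hypothesis, and $\mathbf{V}^{1/2}$, $\Upsilon$, $\omega_p(\theta^k\rho)$, and $\theta^k\rho$ by the restrictions built into the choice of $\sigma$ and $\hat\rho$ in \eqref{choice}. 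Lemma~\ref{prdees} then yields
\[
\Psi(\xo,\lthetakonerhoxo,\theta^{k+1}\rho) \leq c_d\theta^2\, E(\xo,\lthetakrhoxo,\theta^k\rho) \leq 5 c_d\theta^2 \iota \leq \iota,
\]
where the last inequality uses $\theta^2 \leq 1/(5c_d)$ from \eqref{choice}.

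For the $\Phi$ bound, I will invoke the scaled Caccioppoli inequality from Remark~\ref{caccoro}, applied on $B_{\theta^k\rho}(\xo)$ with shrink factor $\theta$ and $\ell = \lthetakonerhoxo$, to obtain $\Phi(\xo,D\lthetakonerhoxo,\theta^{k+1}\rho) \leq c_c E(\xo,\lthetakonerhoxo,\theta^k\rho)$. The delicate summand is $\Psi(\xo,\lthetakonerhoxo,\theta^k\rho)$, in which the affine map is not the $L^2$-best fit on $B_{\theta^k\rho}$. I will handle this exactly as in \eqref{praline}--\eqref{fruitjuice}: split via Lemma~\ref{vproperties}(ii), use Corollary~\ref{gradscales} to compare $1+|D\lthetakonerhoxo|$ with $1+|D\lthetakrhoxo|$ (permissible once $\Psi$ is below the threshold in the corollary), and apply Lemma~\ref{affinegrads} to absorb $|V(\tfrac{\lthetakonerhoxo - \lthetakrhoxo}{\theta^k\rho(1+|D\lthetakrhoxo|)})|^2$ into $\Psi(\xo,\lthetakrhoxo,\theta^k\rho)$. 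The end result is $\Psi(\xo,\lthetakonerhoxo,\theta^k\rho)\leq c\iota$, and the remaining pieces of $E$ are controlled as in step one. The explicit factor $\tfrac{1}{6c_c}\bigl(\tfrac{\theta^{n+1}}{4(n+2)}\bigr)^{\max\{2,p_2\}}\beta$ in the definition of $\iota$ is precisely what absorbs $c_c$ (together with the constant coming from the $\lambda$-rescaling in Lemma~\ref{vproperties}$(ii)$) and delivers $\Phi < \beta$. Finally, for $M$, since $\Phi < \beta \leq \tfrac{1}{36}$, Corollary~\ref{affinest}(i) yields $\fint_{B_{\theta^{k+1}\rho}}|Du|\dx \leq c(1+M)$, so $M(\xo,\theta^{k+1}\rho) \leq c(1+M)\hat\rho < \sigma$ once $\hat\rho$ is small.

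The principal obstacle is the interlocking choice of $\iota, \beta, \theta, \hat\rho, \sigma$ so that the induction closes: Caccioppoli forces $\iota \ll \beta$, the switch of affine maps in step two forces $\beta \ll \theta^n$ (hence the exponent $\max\{2,p_2\}$ appearing in \eqref{choice}), and the preliminary decay estimate forces $c_d\theta^2$ to be small relative to the five-term split of $E$. The constants in \eqref{choice} are tuned to satisfy all three simultaneously; apart from this bookkeeping, the proof is a direct concatenation of Lemma~\ref{prdees}, Remark~\ref{caccoro}, Corollary~\ref{affinest}, and the affine-comparison lemmas already established.
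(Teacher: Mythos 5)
Your overall skeleton matches the paper's: the proof is indeed an induction in $k$ propagating $\Psi$, $\Phi$, and $M$ smallness, with the $\Psi$-step via Lemma~\ref{prdees}, the $\Phi$-step via Remark~\ref{caccoro}, and the $M$-step via Corollary~\ref{affinest} together with the iteration hypothesis. However, two of your subsidiary claims do not hold as stated.

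First, you apply the scaled Caccioppoli inequality on $B_{\theta^k\rho}(\xo)$ with $\ell = \lthetakonerhoxo$, i.e.\ the $L^2$-best-fit affine map on the \emph{inner} ball. But Lemma~\ref{eqcacc} (hence Remark~\ref{caccoro}) carries the hypothesis $\Phi(\xo, D\ell, \theta^k\rho) \leq \tfrac{1}{36}$ for the chosen $\ell$, which with $\ell = \lthetakonerhoxo$ is \emph{not} the inductive assumption; the inductive assumption controls $\Phi(\xo, D\lthetakrhoxo, \theta^k\rho)$ only. To verify it you would need a preliminary $\Phi$-conversion at the outer scale (again via Lemma~\ref{vproperties}(i),(ii), Corollary~\ref{gradscales}, and Lemma~\ref{affinegrads}), which you do not perform. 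The paper avoids this circularity by applying Caccioppoli with $\ell=\lthetakrhoxo$ (so the hypothesis is literally the inductive bound) to obtain $\Phi(\xo,D\lthetakrhoxo,\theta^{k+1}\rho)\le c_c E(\xo,\lthetakrhoxo,\theta^k\rho)$, and only then converting the \emph{conclusion} to the map $\lthetakonerhoxo$ via \eqref{phiphi}.

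Second, the telescoping argument you offer for the invariant $|D\lthetakrhoxo| \leq M$ is incorrect. Lemma~\ref{affinegrads} with the inductive bound $\Psi(\xo,\lthetajrhoxo,\theta^j\rho)<\iota$ gives $|D\ell_{\xo,\theta^{j+1}\rho}-D\ell_{\xo,\theta^j\rho}|\lesssim \sqrt\iota\,(1+|D\ell_{\xo,\theta^j\rho}|)$ at \emph{every} $j$, so the partial sums grow \emph{linearly} in $k$; nothing forces the $\Psi$-excess to decay geometrically along the iteration (the preliminary decay estimate feeds back the non-decaying terms $\mathbf{V}^{1/2}$, $\Upsilon$, $\omega_p$, $\rho$ inside $E$), so the sum is not "geometrically small in $\sqrt\iota$." The uniform control on $|D\lthetakrhoxo|$ is in fact obtained differently in the paper, via $\Phi(\xo,D\lthetakrhoxo,\theta^k\rho)<\beta\le\tfrac1{36}$ and Corollary~\ref{affinest}(ii), which compare $1+|D\lthetakrhoxo|$ with $1+|(Du)_{\xo,\theta^k\rho}|$; the latter is controlled at the level of the regularity argument by the exclusion of $\Sigma_{3,\Omega}$, not inside the lemma by a telescoping sum.
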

\begin{proof}[Proof of Lemma~\ref{almostbmo}:]
In estimating $M(\xo,\theta^k\rho)$ we assume only that $M(\xo,\rho)<\sigma$ and $\Phi(\xo,D\lthetarhoxo,\theta\rho)<\iota$. By the principle of induction it suffices to show that $M(\xo,\theta^k\rho)<\sigma$ and $\Phi(\xo,D\lthetakrhoxo,\theta^k\rho)<\iota$ imply $M(\xo,\theta^{k+1}\rho)<\sigma$.

We begin by calculating
\begin{align}\label{join}
	M(\xo,\theta^{k+1}\rho) 
		&\leq \theta^{k+1}\rho\fint_{B_{\theta^{k+1}\rho}(\xo)}\big|Du-\dnuk\big|\dx + \theta^{k+1}\rho\big|\dnuk\big|\\
		&\leq \theta^{1-n}\theta^{k}\rho\fint_{B_{\theta^{k}\rho}(\xo)}\big|Du-\dnuk\big|\dx + \theta M(\xo,\theta^k\rho).\nonumber
\end{align}
Again writing $\hat{p} = \max\{2,p_2\}$, we note that Corollary~\ref{affinest}~$(ii)$ together with \vprop$(iv)$ and H\"older's inequality let us calculate
\begin{align*}
	\fint_{B_{\theta^{k}\rho}(\xo)}\frac{|Du-\dnuk|}{1 + |\dnuk|}\dx
		&\leq\frac{1 + |D\lthetakrhoxo|}{1 + |\dnuk|}\fint_{B_{\theta^{k}\rho}(\xo)}\frac{|Du-\dnuk|}{1 + |D\lthetakrhoxo|}\dx\\
		&\leq 4\bigg(\fint_{B_{\theta^{k}\rho}(\xo)}\bigg|V\bigg(\frac{Du-D\lthetakrhoxo}{1 + |D\lthetakrhoxo|}\bigg)\bigg|^2\dx\bigg)^{\frac{1}{\hat{p}}}\\
		&\leq 4c(p_2)\Big(\Phi(\xo,D\lthetakrhoxo,\theta^k\rho)\Big)^{\frac{1}{\hat{p}}}\\
		&=: 4c_m\Big(\Phi(\xo,D\lthetakrhoxo,\theta^k\rho)\Big)^{\frac{1}{\hat{p}}}.
\end{align*}
In order to estimate the first term on the last line of \eqref{join}, we can first compute
\begin{align*}
	\theta^{k}\rho\fint_{B_{\theta^{k}\rho}(\xo)}\big|Du-\dnuk\big|\dx
		 &= \theta^{k}\rho\big(1 + |\dnuk|\big)\fint_{B_{\theta^{k}\rho}(\xo)}\frac{|Du-\dnuk|}{1 + |\dnuk|}\dx\\
		&\leq 4c_m\theta^{k}\rho\Big(\Phi(\xo,D\lthetakrhoxo,\theta^k\rho)\Big)^{\frac{1}{\hat{p}}}\bigg(1 + \fint_{B_{\theta^{k}\rho}(\xo)}|Du|\dx\bigg)\\
		&= 4c_m\Big(\Phi(\xo,D\lthetakrhoxo,\theta^k\rho)\Big)^{\frac{1}{\hat{p}}}M(\xo,\theta^k\rho)
		 + 4c_m\theta^{k}\rho\Big(\Phi(\xo,D\lthetakrhoxo,\theta^k\rho)\Big)^{\frac{1}{\hat{p}}}.
\end{align*}
Hence, via \eqref{join} we see
\begin{align*}
	M(\xo,\theta^{k+1}\rho)
		&\leq \theta^{1-n}\theta^{k}\rho\fint_{B_{\theta^{k}\rho}(\xo)}\big|Du-\dnuk\big|\dx + \theta M(\xo,\theta^k\rho)\\
		&\leq 4_m\theta^{1-n}\Big(\Phi(\xo,D\lthetakrhoxo,\theta^k\rho)\Big)^{\frac{1}{\hat{p}}}M(\xo,\theta^k\rho)
		\nl + 4c_m\theta^{1-n}\theta^{k}\rho\Big(\Phi(\xo,D\lthetakrhoxo,\theta^k\rho)\Big)^{\frac{1}{\hat{p}}}
		 + \theta M(\xo,\theta^k\rho)\\
		&\leq \frac{M(\xo,\theta^k\rho)}{4} + \frac{\theta^{k}\rho}{4} + \frac{M(\xo,\theta^k\rho)}{8}
		\leq \sigma,
\end{align*}
whenever
\begin{align*}
	\Phi(\xo,D\lthetakrhoxo,\theta^k\rho) \leq \bigg(\frac{\theta^{n}}{2c_m}\bigg)^{\hat{p}},\mathand \rho\leq \frac{\sigma}{\theta^k},
\end{align*}
which holds by \eqref{choice}. Since $\hat\rho\leq\sigma$ satisfies
\begin{align*}
	\mathbf{V}(\hat\rho)<\iota^2,\qquad
	\omega_p(\hat\rho)<\iota,\qquad
	\hat\rho<\iota,
\qquad\text{and}\qquad 
	\omega_{\xi}(\sigma)\leq\iota,
\end{align*}
we can use estimate \eqref{firstit} with $\theta^k\rho$ in place of $\rho$ to establish
\begin{align*}
	\Psi(\xo,\lplusthetakonerhoxo,\theta^{k+1}\rho) &\leq c_{d}\theta^2E(\xo,\lplusthetakrhoxo,\theta^k\rho)
		\leq 5c_{d}\theta^2\iota
		\leq\iota,
\end{align*}
provided $\theta\leq\big(\frac{1}{6c_{d}}\big)^{\frac{1}{2}}$, which holds by \eqref{choice}. Finally we show the estimate for $\Phi$. Note first of all that via Remark~\ref{caccoro} we can calculate
\begin{align}\label{jill}
	\Phi(\xo,D\lplusthetakrhoxo,\theta^{k+1}\rho) &\leq c_{c}E(\xo,\lplusthetakrhoxo,\theta^k\rho)
		\leq 6c_{c}\iota,
\end{align}
so it remains to estimate $\Phi(\xo,D\lplusthetakonerhoxo,\theta^{k+1}\rho)$ in terms of $\Phi(\xo,D\lplusthetakrhoxo,\theta^{k+1}\rho)$. We begin by noting \vprop$(i)$ and $(ii)$ with Corollary~\ref{gradscales} together yield
\begin{align}\label{phiphi}
	\Phi(\xo,D\lplusthetakonerhoxo,\theta^{k+1}\rho) 
		&\leq \fint_{B_{\theta^{k+1}\rho}(\xo)}\bigg|V\bigg(\frac{1 + |D\lplusthetakrhoxo|}{1 + |D\lplusthetakonerhoxo|}\frac{Du - D\lplusthetakonerhoxo}{1 + |D\lplusthetakrhoxo|}\bigg)\bigg|^2\dx\\
		&\leq 2^{\hat{p}}\fint_{B_{\theta^{k+1}\rho}(\xo)}\bigg|V\bigg(\frac{Du - D\lplusthetakonerhoxo}{1 + |D\lplusthetakrhoxo|}\bigg)\bigg|^2\dx\nonumber\\
		&\leq 2^{p_2+\hat{p}}\Bigg[\Phi(\xo,D\lplusthetakrhoxo,\theta^{k+1}\rho)\nonumber
			 + \bigg|V\bigg(\frac{D\lplusthetakrhoxo - D\lplusthetakonerhoxo}{1 + |D\lplusthetakrhoxo|}\bigg)\bigg|^2\Bigg].
\end{align}
When considering the second term, we begin by noting via Lemma~\ref{affinegrads}
\begin{align*}
	\big|D\lplusthetakrhoxo - D\lplusthetakonerhoxo\big| &\leq \bigg(\frac{n+2}{\theta^{k+1}\rho}\bigg)\fint_{B_{\theta^{k+1}\rho}(\xo)}\big|u-\lplusthetakrhoxo\big|\dx\\
		&\leq \bigg(\frac{n+2}{\theta^{n+1}}\bigg)\fint_{B_{\theta^{k}\rho}(\xo)}\bigg|\frac{u-\lplusthetakrhoxo}{\theta^k\rho}\bigg|\dx.
\end{align*}
So \vprop$(i)$, the almost-convexity of $V$ as per \eqref{almostconvex}, and Jensen's inequality imply
\begin{align*}
	\bigg|V\bigg(\frac{D\lplusthetakrhoxo - D\lplusthetakonerhoxo}{1 + |D\lplusthetakrhoxo|}\bigg)\bigg|^2
		&\leq \bigg|V\bigg(\frac{n+2}{\theta^{n+1}}\fint_{B_{\theta^{k}\rho}(\xo)}\bigg|\frac{u-\lplusthetakrhoxo}{\theta^k\rho(1 + |D\lplusthetakrhoxo|)}\bigg|\dx\bigg)\bigg|^2\\
		&\leq \bigg(\frac{n+2}{\theta^{n+1}}\bigg)^{\hat{p}} \fint_{B_{\theta^{k}\rho}(\xo)}\bigg|V\bigg(\bigg|\frac{u-\lplusthetakrhoxo}{\theta^k\rho(1 + |D\lplusthetakrhoxo|)}\bigg|\bigg)\bigg|^2\dx.
\end{align*}
Plugging this into \eqref{phiphi}, in view of \eqref{jill} and 
	$\iota\leq \frac{1}{6c_{c}}\bigg(\frac{\theta^{n+1}}{4(n+2)}\bigg)^{\hat{p}}\beta$
from \eqref{choice}, we find
\begin{align*}
	\Phi(\xo,D\lplusthetakonerhoxo,\theta^{k+1}\rho)
		&\leq 2^{p_2+\hat{p}}\Phi(\xo,D\lplusthetakrhoxo,\theta^{k+1}\rho)l
			 + 2^{p_2}\bigg(2\frac{n+2}{\theta^{n+1}}\bigg)^{\hat{p}}\Phi(\xo,D\lplusthetakrhoxo,\theta^{k}\rho)\\
		&\leq 2^{p_2+\hat{p}}\Bigg[6c_{c} + \bigg(\frac{n+2}{\theta^{n+1}}\bigg)^{\hat{p}}\Bigg]\iota
		< 6c_{c}\left(4\frac{n+2}{\theta^{n+1}}\right)^{\hat{p}}\iota
		\leq\beta.
\end{align*}
\end{proof}

\subsection*{Iteration}
We proceed to calculate via Corollary~\ref{affinest}~$(ii)$, \vprop$(iv)$ and H\"older's inequality, writing $q = \min\{\frac{1}{2},\frac{1}{p_2}\}$ and keeping in mind our choice of $\beta$ from \eqref{choice}
\begin{align*}
	\int_{\bthetakone}\big|Du\big|\dx &\leq \int_{\bthetakone}\big|Du-\dnuk\big|\dx + \alpha_n\big(\theta^{k+1}\rho\big)\big|\dnuk\big|\\
		&\leq 2\big(1 + |\dnuk|\big)\int_{\bthetak}\frac{|Du-\dnuk|}{1 + |\lthetakrhoxo|}\dx
		\nl + \alpha_n\big(\theta^{k+1}\rho\big)^n\big|\dnuk\big|\\
		&\leq 2\alpha_n\big(\theta^{k}\rho\big)^n\Phi^{\frac{1}{\hat{p}}}(\xo,D\lthetarhoxo,\rho) + \alpha_n\left(\theta^{k}\rho\right)^n\big[2\Phi^{\frac{1}{\hat{p}}}(\xo,D\lthetarhoxo,\rho) + \theta^n\big]\big|\dnuk\big| \\
		&\leq c\big(\theta^{k}\rho\big)^n\beta + 2\theta^n\int_{\bthetak}\big|Du\big|\dx\\
		&\leq \theta^\gamma\int_{\bthetak}\big|Du\big|\dx + c\beta\big(\theta^{k}\rho\big)^n,
\end{align*}
for each $k\in\N$ and $\gamma\in(n-1,n)$ by choice of $\theta$ in \eqref{choice}. Setting $f(t) = \int_{B_{t}(\xo)}|Du|\dx$ in Lemma~\ref{iteration}, we deduce that for every $r\in(0,\rho)$
\begin{align}
	\int_{\brxo}\big|Du\big|\dx  \label{morrey}
		\leq c\Bigg[ r^\gamma + \bigg(\frac{r}{\rho}\bigg)^\gamma\int_{\brhoxo}\big|Du\big|\dx\Bigg]
		\leq c\Bigg[1 + \frac{1}{\rho^\gamma}\int_{\Omega}\big|Du\big|\dx\Bigg] r^\gamma,
\end{align}
for some constant depending on $\theta,\gamma,n$ and so ultimately on all of the structure data.

\subsection*{Interpolation}
In order conclude Theorem~\ref{vmoreg}, we require estimates on our renormalised first-order excess functional in terms of the quantities appearing in the characterisation of our singular sets.

We note via \vprop$(iv)$ and H\"older's inequality
\begin{align}
	\Phi(\xo,D\ell_{\xo,\rho},\rho)
		&\leq\fint_{\brhoxo}\bigg|\frac{Du - D\ell_{\xo,\rho}}{1 + |D\ell_{\xo,\rho}|}\bigg|^{p_2}\dx + \bigg(\fint_{\brhoxo}\bigg|\frac{Du - D\ell_{\xo,\rho}}{1 + |D\ell_{\xo,\rho}|}\bigg|^{p_2}\dx\bigg)^{\min\{\frac{2}{\hat{p}},1\}}\nonumber\\
		&\leq 2\bigg(\fint_{\brhoxo}\bigg|\frac{Du - D\ell_{\xo,\rho}}{1 + |D\ell_{\xo,\rho}|}\bigg|^{p_2}\dx\bigg)^{\min\{\frac{2}{\hat{p}},1\}}\nonumber\\
		&\leq c(p_2)\bigg(\fint_{\brhoxo}\bigg|\frac{Du - D\ell_{\xo,\rho}}{1 + |\dnu|}\bigg|^{p_2}\dx+ \bigg|\frac{\dnu - D\ell_{\xo,\rho}}{1 + |\dnu|}\bigg|^{p_2}\bigg)^{\min\{\frac{2}{\hat{p}},1\}}.\label{dweeb}
\end{align}
 Using \eqref{dlest2} from Corollary~\ref{affinegrads} and Poincar\'e's inequality we continue to calculate
\begin{align}
	\bigg|\frac{\dnu - D\ell_{\xo,\rho}}{1 + |\dnu|}\bigg|^{p_2}
		&\leq c(n,p_2)\fint_{\brhoxo}\bigg|\frac{u - \urho - \dnu(x-\xo)}{\rho(1 + |\dnu|)}\bigg|^{p_2}\dx\nonumber\\
		&\leq c(n,p_2)\fint_{\brhoxo}\bigg|\frac{Du - \dnu}{1 + |\dnu|}\bigg|^{p_2}\dx\label{shark}.
\end{align}
We now use the interpolation estimate Lemma~\ref{logconvex}, with $s = p_2$, $p = 1$ and $q = p_2(1 + \frac{\delta}{4})$, we find $\theta = \frac{\delta}{p_2(4+\delta) - 4}\in(0,1)$ to compute
\begin{align*}
	\bigg\|\frac{Du - \dnu}{1 + |\dnu|}\bigg\|_{L^{p_2}} &\leq \bigg\|\frac{Du - \dnu}{1 + |\dnu|}\bigg\|_{L^1}^\theta\bigg\|\frac{Du - \dnu}{1 + |\dnu|}\bigg\|_{L^{p_2(1 + \frac{\delta}{4})}}^{1-\theta}.
\end{align*}
After averaging, this is just
\begin{align}
	\fint_{\brhoxo}\bigg|\frac{Du - \dnu}{1 + |\dnu|}\bigg|^{p_2}\dx
		&\leq \bigg(\fint_{\brhoxo}\bigg|\frac{Du - \dnu}{1 + |\dnu|}\bigg|\dx\bigg)^{\frac{p_2\delta}{p_2(4 + \delta) - 4}}\nonumber\\
		&\qquad \times
		 \bigg(\fint_{\brhoxo}\bigg|\frac{Du - \dnu}{1 + |\dnu|}\bigg|^{p_2(1 + \frac{\delta}{4})}\dx\bigg)^{\frac{4p_2 - 4}{p_2(4 + \delta) - 4}}\nonumber\\
		&\leq \kappa^{\frac{p_2\delta}{p_2(4 + \delta) - 4}} \bigg(\fint_{\brhoxo}\bigg|\frac{Du - \dnu}{1 + |\dnu|}\bigg|^{p_2(1 + \frac{\delta}{4})}\dx\bigg)^{\frac{4p_2 - 4}{p_2(4 + \delta) - 4}}\label{squid},
\end{align}
since $\frac{p_2\delta}{p_2(4 + \delta) - 4} + \frac{4p_2 - 4}{p_2(4 + \delta) - 4} = 1.$ Now writing $\lambda = \big(1 + |\dnu|\big)^{-1}$, we have via Lemma~\ref{frozenexponent} (with $p_0 = p_2(1 + \frac{\delta}{4})$ and $p=p_2$), and Corollary~\ref{affinest}~$(i)$ and $(ii)$
\begin{align*}
	\fint_{\brhoxo}\bigg|\frac{Du - \dnu}{1 + |\dnu|}\bigg|^{p_2(1 + \frac{\delta}{4})}\dx
		&\leq c(p_2)\fint_{\brhoxo} \big|\lambda Du\big|^{p_2(1 + \frac{\delta}{4})} + \big|\lambda\dnu\big|^{p_2(1 + \frac{\delta}{4})}\dx\\
		&\leq c(p_2) + c\lambda^{p_2(1 + \frac{\delta}{4})}\bigg(\fint_{\btworhoxo}1 + |Du|^{p_2}\dx\bigg)^{1 + \frac{\delta}{4}}\\
		&\leq c(p_2) + c|\lambda|^{p_2(1 + \frac{\delta}{4})}(1 + |\dtwo|)^{p_2(1 + \frac{\delta}{4})}\\
		&\leq c(p_2) + c|\lambda|^{p_2(1 + \frac{\delta}{4})}(1 + |\dnu|)^{p_2(1 + \frac{\delta}{4})}\\
		&\leq c=:c_v,
\end{align*}
by definition of $\lambda$. Here, the constant retains the dependencies of the constant from Lemma~\ref{frozenexponent}. Plugging this into \eqref{squid}, we have
\begin{align}
	\fint_{\brhoxo}\bigg|\frac{Du - \dnu}{1 + |\dnu|}\bigg|^{p_2}\dx
		&\leq \bigg(\fint_{\brhoxo}\bigg|\frac{Du - \dnu}{1 + |\dnu|}\bigg|\dx\bigg)^{\frac{p_2\delta}{p_2(4 + \delta) - 4}}\nonumber\\
		&\qquad \times \bigg(\fint_{\brhoxo}\bigg|\frac{Du - \dnu}{1 + |\dnu|}\bigg|^{p_2(1 + \frac{\delta}{4})}\dx\bigg)^{\frac{4p_2 - 4}{p_2(4 + \delta) - 4}}\nonumber\\
		&\leq \kappa^{\frac{p_2\delta}{p_2(4 + \delta) - 4}} c_v^{\frac{4p_2 - 4}{p_2(4 + \delta) - 4}}\label{fish}.
\end{align}

\subsection{Partial regularity}
We note the dependence of $\kappa$ on $\gamma$ (via $\beta$'s dependence on $\theta$), and note that $\kappa\rightarrow0$ as $\gamma\rightarrow n$. Furthermore, since $\rho\fint_{\brhoxo}|Du|\dx<\sigma$, Poincar\'e's inequality implies
\begin{align*}
	\Upsilon(\xo,\ell_{\xo,\rho},\rho) &= \omega_{\xi}\left(\fint_{\brhoxo}|u-\urho|\dx\right)
		\leq \omega_{\xi}\left(\rho\fint_{\brhoxo}|Du|\dx\right)
		\leq \omega_{\xi}\left(\sigma\right)
		\leq \iota.
\end{align*}

Taking some point $\xo\in\Omega$ and $\rho\leq\hat\rho$ satisfying $|D\ell|\leq M$ for fixed $M<\infty$ with
\begin{align}\label{epreg}
	\liminf_{\rho\downarrow0}\fint_{\brhoxo}\big|{Du - \dnu}\bigg|\dx<\kappa,\qquad\text{and}\qquad \liminf_{\rho\downarrow0}\rho\fint_{\brhoxo}|Du|\dx<\sigma,
\end{align}
we can find a $\rho<\hat\rho$ such that the conditions of Lemma~\ref{almostbmo} hold, with $\rho$ at this stage depending on all of the structure conditions. Furthermore, if the conditions of Lemma~\ref{almostbmo} hold at this point $\xo$ and fixed $\rho<\hat\rho$, then by the absolute continuity of the Lebesgue integral, there exists an $R<\rho$ such that these same conditions hold for each $x\in B_{R}(\xo)$. Consequently, we deduce that \eqref{morrey} and \eqref{fish} hold for every $r\leq\frac{R}{4}$ and $y\in B_\frac{R}{4}(\xo)$. This implies $Du$ belongs to the Morrey space $L^{1,\gamma}(B_{\frac{R}{4}}(y),\RnN)$ for $\gamma\in(n-1,n)$, and so the Morrey-Campanato embedding theorem implies $u\in C^{0,\tau}(B_{\frac{R}{4}}(y),\RN)$ for $\tau = 1-n-\gamma$. Note $\tau$ can be chosen to be any value in $(0,1)$ provided $\kappa$ and hence $\sigma$ are chosen accordingly as functions of $\tau$, decaying to $0$ as $\alpha \uparrow 1$. This in turn restricts the neighbourhood $B_{\frac{R}{4}}(\xo)$ on which the estimate holds, since $\hat\rho$ also has dependence on $\gamma$ via $\alpha$ through $\beta$, and so ultimately $\theta$. Indeed, if we take $\kappa=\sigma=0$ we can obtain the conclusion of Theorem~\ref{vmoreg} for every $\tau\in(0,1)$.

By definition we have the inclusions $\Sigma^\tau_{1,u}\subset\Sigma^0_{1,u}$ and $\Sigma^\tau_{2,u}\subset\Sigma^0_{2,u}$. Since these sets are closed, we have $\reg{u}$ and $\reg^\tau(u)$ are relatively open in $\Omega$ and hence open. Furthermore, by Lebesgue's differentiation theorem we have $|\Sigma^0_{1,u}|=0$, and Lemma~\ref{dimred} implies $\dimh(\Sigma^0_{2,u})\leq n-1$. We conclude that $| \Omega\setminus\reg(u)|=0$ and hence $|\Omega\setminus\reg^\tau(u)|=0.\hfill\blacksquare$

\end{document}